\DeclareFontFamily{U}{rsfs}{} \DeclareFontShape{U}{rsfs}{n}{it}{<->
rsfs10}{} \DeclareSymbolFont{mscr}{U}{rsfs}{n}{it}
\DeclareSymbolFontAlphabet{\scr}{mscr}
\def\mathscr{\scr}
\begin{document}
\def\e#1\e{\begin{equation}#1\end{equation}}
\def\ea#1\ea{\begin{align}#1\end{align}}
\def\eq#1{{\rm(\ref{#1})}}
\theoremstyle{plain}
\newtheorem{thm}{Theorem}[section]
\newtheorem{prop}[thm]{Proposition}
\newtheorem{lem}[thm]{Lemma}
\newtheorem{cor}[thm]{Corollary}
\newtheorem{quest}[thm]{Question}
\theoremstyle{definition}
\newtheorem{dfn}[thm]{Definition}
\newtheorem{ex}[thm]{Example}
\newtheorem{rem}[thm]{Remark}
\numberwithin{equation}{section}
\def\dim{\mathop{\rm dim}\nolimits}
\def\supp{\mathop{\rm supp}\nolimits}
\def\cosupp{\mathop{\rm cosupp}\nolimits}
\def\id{\mathop{\rm id}\nolimits}
\def\Hess{\mathop{\rm Hess}\nolimits}
\def\Sym{\mathop{\rm Sym}\nolimits}
\def\Ker{\mathop{\rm Ker}}
\def\Coker{\mathop{\rm Coker}}
\def\DR{\mathop{\rm DR}\nolimits}
\def\HP{\mathop{\rm HP}\nolimits}
\def\HN{\mathop{\rm HN}\nolimits}
\def\HC{\mathop{\rm HC}\nolimits}
\def\NC{\mathop{\rm NC}\nolimits}
\def\PC{\mathop{\rm PC}\nolimits}
\def\CC{\mathop{\rm CC}\nolimits}
\def\Crit{\mathop{\rm Crit}}
\def\an{{\rm an}}
\def\inf{{\rm inf}}
\def\fib{\mathop{\rm fib}\nolimits}
\def\Aut{\mathop{\rm Aut}}
\def\Hom{\mathop{\rm Hom}\nolimits}
\def\Ext{\mathop{\rm Ext}\nolimits}
\def\coh{\mathop{\rm coh}\nolimits}
\def\Ho{\mathop{\rm Ho}\nolimits}
\def\Perv{\mathop{\rm Perv}\nolimits}
\def\Sch{\mathop{\bf Sch}\nolimits}
\def\Var{\mathop{\bf Var}\nolimits}
\def\St{\mathop{\bf St}\nolimits}
\def\Art{\mathop{\bf Art}\nolimits}
\def\dSt{\mathop{\bf dSt}\nolimits}
\def\dArt{\mathop{\bf dArt}\nolimits}
\def\dSch{\mathop{\bf dSch}\nolimits}
\def\red{{\rm red}}
\def\cl{{\rm cl}}
\def\DM{\mathop{\rm DM}\nolimits}
\def\Spec{\mathop{\rm Spec}\nolimits}
\def\bSpec{\mathop{\bs{\rm Spec}}\nolimits}
\def\rank{\mathop{\rm rank}\nolimits}
\def\qcoh{{\mathop{\rm qcoh}}}
\def\bs{\boldsymbol}
\def\ge{\geqslant}
\def\le{\leqslant\nobreak}
\def\bA{{\mathbin{\mathbb A}}}
\def\bL{{\mathbin{\mathbb L}}}
\def\bT{{\mathbin{\mathbb T}}}
\def\bR{{\bs R}}
\def\bS{{\bs S}}
\def\bU{{\bs U}}
\def\bV{{\bs V}}
\def\bW{{\bs W}}
\def\bX{{\bs X}}
\def\bY{{\bs Y}}
\def\bZ{{\bs Z}}
\def\cA{{\mathbin{\cal A}}}
\def\cE{{\mathbin{\cal E}}}
\def\cL{{\mathbin{\cal L}}}
\def\cM{{\mathbin{\cal M}}}
\def\oM{{\mathbin{\smash{\,\,\overline{\!\!\mathcal M\!}\,}}}}
\def\O{{\mathbin{\cal O}}}
\def\PV{{\mathbin{\cal{PV}}}}
\def\bG{{\mathbin{\mathbb G}}}
\def\fE{{\mathbin{\mathfrak E}}}
\def\fG{{\mathbin{\mathfrak G}}}
\def\fU{\mathbin{\mathfrak U}}
\def\C{{\mathbin{\mathbb C}}}
\def\K{{\mathbin{\mathbb K}}}
\def\N{{\mathbin{\mathbb N}}}
\def\Q{{\mathbin{\mathbb Q}}}
\def\R{{\mathbin{\mathbb R}}}
\def\Z{{\mathbin{\mathbb Z}}}
\def\al{\alpha}
\def\be{\beta}
\def\ga{\gamma}
\def\de{\delta}
\def\io{\iota}
\def\ep{\epsilon}
\def\la{\lambda}
\def\ka{\kappa}
\def\th{\theta}
\def\ze{\zeta}
\def\up{\upsilon}
\def\vp{\varphi}
\def\si{\sigma}
\def\om{\omega}
\def\De{\Delta}
\def\La{\Lambda}
\def\Th{\Theta}
\def\Om{\Omega}
\def\Ga{\Gamma}
\def\Si{\Sigma}
\def\Up{\Upsilon}
\def\pd{\partial}
\def\ts{\textstyle}
\def\st{\scriptstyle}
\def\sst{\scriptscriptstyle}
\def\sm{\setminus}
\def\bu{\bullet}
\def\op{\oplus}
\def\ot{\otimes}
\def\boxt{\boxtimes}
\def\ov{\overline}
\def\ul{\underline}
\def\bigop{\bigoplus}
\def\iy{\infty}
\def\es{\emptyset}
\def\ra{\rightarrow}
\def\ab{\allowbreak}
\def\longra{\longrightarrow}
\def\hookra{\hookrightarrow}
\def\bs{\boldsymbol}
\def\t{\times}
\def\ci{\circ}
\def\ti{\tilde}
\def\d{{\rm d}}
\def\od{\odot}
\def\bd{\boxdot}
\def\ha{{\ts\frac{1}{2}}}
\def\md#1{\vert #1 \vert}
\def\bmd#1{\big\vert #1 \big\vert}
\def\cS{{\mathbin{\cal S}}}
\def\cSz{{\mathbin{\cal S}\kern -0.1em}^{\kern .1em 0}}
\newcommand{\dd}{{\rm d}_{dR}}
\title{A Darboux theorem for derived schemes with shifted
symplectic structure}
\author{Christopher Brav, Vittoria Bussi, and Dominic Joyce}
\date{}
\maketitle

\begin{abstract}
We prove a Darboux theorem for derived schemes with symplectic forms
of degree $k<0$, in the sense of Pantev, To\"en, Vaqui\'e and
Vezzosi \cite{PTVV}. More precisely, we show that a derived scheme
$\bX$ with symplectic form $\ti\om$ of degree $k$ is locally
equivalent to $(\bSpec A,\om)$ for $\bSpec A$ an affine derived
scheme in which the cdga $A$ has Darboux-like coordinates with
respect to which the symplectic form $\om$ is standard, and in which
the differential in $A$ is given by a Poisson bracket with a
Hamiltonian function $\Phi$ of degree $k+1$.

When $k=-1$, this implies that a $-1$-shifted symplectic derived
scheme $(\bX,\ti\om)$ is Zariski locally equivalent to the derived
critical locus $\bs\Crit(\Phi)$ of a regular function $\Phi:U\ra\bA^1$ on
a smooth scheme $U$. We use this to show that the classical scheme
$X=t_0(\bX)$ has the structure of an {\it algebraic d-critical
locus}, in the sense of Joyce~\cite{Joyc}.

In the sequels \cite{BBBJ,BoJo,BBDJS,BJM,JoSa} we extend these results to (derived) Artin stacks, and discuss a Lagrangian neighbourhood theorem for shifted symplectic derived schemes, and applications to categorified and motivic Don\-ald\-son--Thomas theory of Calabi--Yau 3-folds, and to defining new Don\-ald\-son--Thomas type invariants of Calabi--Yau 4-folds, and to defining `Fukaya categories' of Lagrangians in algebraic symplectic manifolds using perverse sheaves.
\end{abstract}

\setcounter{tocdepth}{2}
\tableofcontents

\section{Introduction}
\label{da1}

In the context of To\"en and Vezzosi's theory of derived algebraic
geometry \cite{Toen,ToVe1,ToVe2}, Pantev, To\"en, Vaqui\'e and
Vezzosi \cite{PTVV,Vezz2} defined a notion of $k$-{\it shifted
symplectic structure\/} $\om$ on a derived scheme or stack $\bX$,
for $k\in\Z$. If $\bX$ is a derived scheme and $\om$ a 0-shifted
symplectic structure, then $\bX=X$ is a smooth classical scheme and
$\om\in H^0(\La^2T^*X)$ a classical symplectic structure
on~$X$.

Pantev et al.\ \cite{PTVV} introduced a notion of Lagrangian $\bs
i:\bs L\ra\bX$ in a $k$-shifted symplectic derived stack
$(\bX,\om)$, and showed that the fibre product \hbox{$\bs L\t_\bX\bs
M$} of Lagrangians $\bs i:\bs L\ra\bX$, $\bs j:\bs M\ra\bX$ is
$(k-1)$-shifted symplectic. Thus, (derived) intersections $L\cap M$
of Lagrangians $L,M$ in a classical algebraic symplectic manifold
$(S,\om)$ are $-1$-shifted symplectic. They also proved that if $Y$
is a Calabi--Yau $m$-fold then the derived moduli stacks $\bs\cM$ of
(complexes of) coherent sheaves on $Y$ carry a natural
$(2-m)$-shifted symplectic structure.

The main aim of this paper is to prove a `Darboux Theorem', Theorem
\ref{da5thm1} below, which says that if $\bX$ is a derived scheme
and $\ti\om$ a $k$-shifted symplectic structure on $\bX$ for $k<0$
with $k\not\equiv 2\mod 4$, then $(\bX,\ti\om)$ is Zariski locally
equivalent to $(\bSpec A,\om)$, for $\bSpec A$ an affine derived
scheme in which the cdga $A$ is smooth in degree zero and quasi-free
in negative degrees, and has Darboux-like coordinates
$x^i_j,y^{k-i}_j$ with respect to which the symplectic form
$\om=\sum_{i,j}\dd x^i_j\dd y^{k-i}_j$ is standard, and in which the
differential in $A$ is given by a Poisson bracket with a Hamiltonian
function $\Phi$ of degree $k+1$.

When $k<0$ with $k\equiv 2\mod 4$ we give two statements, one
Zariski local in $\bX$ in which the symplectic form $\om$ on $\bSpec
A$ is standard except for the part in the degree $k/2$ variables,
which depends on some functions $q_i$, and one \'etale local in
$\bX$ in which $\om$ is entirely standard. In the case $k=-1$, Theorem \ref{da5thm1} implies that a $-1$-shifted symplectic derived scheme $(\bX,\ti\om)$ is Zariski locally equivalent to the derived critical locus $\bs\Crit(\Phi)$ of a regular function $\Phi:U\ra\bA^1$ on a smooth scheme~$U$. 

This is the second in a series of eight papers \cite{Joyc}, \cite{BBDJS}, \cite{BJM}, \cite{BBBJ}, \cite{Buss}, \cite{BoJo}, \cite{JoSa}, with more to come. The previous paper \cite{Joyc} defined {\it algebraic d-critical loci\/} $(X,s)$, which are classical schemes $X$ with an extra (classical, not derived) geometric structure $s$ that records information on how $X$ may locally be written as a classical
critical locus $\Crit(\Phi)$ of a regular function $\Phi:U\ra\bA^1$ on a
smooth scheme~$U$.

Our second main result, Theorem \ref{da6thm4} below, says that if $(\bX,\om)$ is a $-1$-shifted symplectic derived scheme then the underlying classical scheme $X=t_0(\bX)$ extends naturally to an
algebraic d-critical locus $(X,s)$. That is, we define a truncation
functor from $-1$-shifted symplectic derived schemes to algebraic
d-critical loci.

The third and fourth papers \cite{BBDJS,BJM} will show that if $(X,s)$ is an algebraic d-critical locus with an `orientation', then we can define a natural perverse sheaf $P_{X,s}^\bu$, a $\scr D$-module
$D_{X,s}^\bu$, a mixed Hodge module $M_{X,s}^\bu$ (when $X$ is over
$\C$), and a motive $MF_{X,s}$ on $X$, such that if $(X,s)$ is
locally modelled on $\Crit(\Phi:U\ra\bA^1)$ then
$P_{X,s}^\bu,D_{X,s}^\bu, M_{X,s}^\bu$ are locally modelled on the
perverse sheaf, $\scr D$-module, and mixed Hodge module of vanishing
cycles of $\Phi$, and $MF_{X,s}$ is locally modelled on the motivic vanishing cycle of~$\Phi$.

Combining these with Theorem \ref{da6thm4} and results of Pantev et
al.\ \cite{PTVV} gives natural perverse sheaves, $\scr D$-modules,
mixed Hodge modules, and motives on classical moduli schemes $\cM$
of simple (complexes of) coherent sheaves on a Calabi--Yau 3-fold
with `orientations', and on intersections $L\cap M$ of spin
Lagrangians $L,M$ in an algebraic symplectic manifold $(S,\om)$.
These will have applications to categorified and motivic
Donaldson--Thomas theory of Calabi--Yau 3-folds, and to defining
`Fukaya categories' of Lagrangians in algebraic symplectic manifolds
using perverse sheaves.

The fifth paper \cite{BBBJ} will extend the results of this paper and \cite{BBDJS,BJM} from (derived) schemes to (derived) Artin stacks. The sixth \cite{Buss} will prove a complex analytic analogue of Corollary \ref{da6cor2} below, saying that the intersection $X=L\cap M$ of complex Lagrangians $L,M$ in a complex symplectic manifold $(S,\om)$ extends naturally to a complex analytic d-critical locus~$(X,s)$.

The seventh paper \cite{BoJo} uses Theorem \ref{da5thm1} to show that any $-2$-shifted symplectic derived $\C$-scheme $(\bX,\om)$ can be given the structure of a derived smooth manifold $\bX_{\rm dm}$. If $\bX_{\rm dm}$ is compact and oriented, it has a virtual cycle $[\bX_{\rm dm}]$ in bordism or homology. Using this, we propose to define Donaldson--Thomas style invariants `counting' (semi)stable coherent sheaves on Calabi--Yau 4-folds.

The eighth paper \cite{JoSa} proves a `Lagrangian neighbourhood theorem' which gives local models for Lagrangians $\bs L$ in $k$-shifted symplectic derived $\K$-schemes $(\bX,\om)$, relative to the `Darboux form' local models for $(\bX,\om)$ in Theorem~\ref{da5thm1}.

We begin with background material from algebra in \S\ref{da2}, and
from derived algebraic geometry in \S\ref{da3}. Section \ref{da4},
which is not particularly original, proves that any derived
$\K$-scheme $\bX$ is near any point $x\in\bX$ Zariski locally
equivalent to $\bSpec A$ for $A$ a `standard form' cdga which is
minimal at $x$, and explains how to compare two such local
presentations~$\bSpec A\simeq \bX\simeq \bSpec B$.

The heart of the paper is \S\ref{da5}, which defines our `Darboux
form' local models $A,\om$, and proves our main result Theorem
\ref{da5thm1}, that any $k$-shifted symplectic derived $\K$-scheme
$(\bX,\ti\om)$ for $k<0$ is locally of the form $(\bSpec A,\om)$.
Finally, \S\ref{da6} discusses algebraic d-critical loci from
\cite{Joyc}, and proves our second main result Theorem
\ref{da6thm4}, defining a truncation functor from $-1$-shifted
symplectic derived $\K$-schemes to algebraic d-critical loci.

Bouaziz and Grojnowski \cite{BoGr} have independently proved their own `Darboux Theorem' for $k$-shifted symplectic derived schemes for $k<0$, similar to Theorem \ref{da5thm1}, by a different method. See Joyce and Safronov \cite[Rem.~2.15]{JoSa} for an explanation of how this paper and \cite{BoGr} are related.
\medskip

\noindent{\bf Conventions.} Throughout $\K$ will be an algebraically closed field with characteristic zero. All cdgas will be graded in nonpositive degrees (i.e.\ they are {\it connective\/} cdgas). All classical $\K$-schemes are assumed locally of finite type, and all derived $\K$-schemes $\bX$ are assumed to be locally finitely presented.
\medskip

\noindent{\bf Acknowledgements.} We would like to thank Dennis Borisov, Pavel Safronov, Bal\'azs Szendr\H oi, Bertrand To\"en and Gabriele Vezzosi for helpful conversations, and two referees for useful comments. This research was supported by EPSRC Programme Grant
EP/I033343/1.

\section{Background from algebra}
\label{da2}

We begin by reviewing some fairly standard facts about commutative differential graded algebras (cdgas), and their cotangent complexes. Some references on cdgas in Derived Algebraic Geometry are are To\"en and Vezzosi \cite[\S 2.3]{ToVe1}, \cite[\S 2]{ToVe2} and Lurie \cite[\S 7.1]{Luri2}, on cdgas from other points of view are Gelfand and Manin \cite[\S 5.3]{GeMa} and Hess \cite{Hess}, and on cotangent complexes are To\"en and Vezzosi \cite[\S 1.4]{ToVe1}, \cite[\S 4.2.4--\S 4.2.5, \S 3.1.7]{Toen} and Lurie~\cite[\S 7.3]{Luri2}.

\subsection{Commutative graded algebras}
\label{da21}

In this section we introduce general definitions and conventions
about commutative graded algebras.

\begin{dfn}
\label{da2def1} 
Fix a ground field $\K$ of characteristic zero.
Eventually we shall wish to standardize a quadratic form, at which
point we need to assume in addition that $\K$ is algebraically
closed. We shall work with {\it connective commutative graded algebras} $A$
over $\K$ so $A$ has a decomposition $A=\bigop_{i\le 0} A^{i}$ (`connective' means concentrated in non-positive degrees) and an associative product
$m : A^{i} \ot A^{j} \ra A^{i+j}$ satisfying $fg=(-1)^{|f||g|}gf$
for homogeneous elements $f,g \in A$. Given a graded left module
$M$, we consider it as a {\it symmetric bimodule} by setting
\begin{equation*}
m \cdot f=(-1)^{|f||m|} f \cdot m
\end{equation*}
for homogeneous elements $f \in A$, $m \in M$.

Define a {\it derivation of degree $k$} from $A$ to a graded module
$M$ to be a $\K$-linear map $\de: A \ra M$ that is homogeneous of
degree $k$ and satisfies
\begin{equation*}
\de(fg)=\de(f)g+(-1)^{k|f|}f\de(g).
\end{equation*}
Just as for (ungraded) commutative algebras, there is a universal
derivation into a (graded) module of {\it K\"ahler differentials\/}
$\Om^1_A$, which can be constructed as $I/I^2$ for $I=\Ker(m: A\ot A
\ra A)$. The universal derivation $\de: A \ra \Om^1_A$ is then
computed as $\de(a)=a\ot 1-1 \ot a \in I/I^2$. One checks that
indeed $\de$ is a universal degree $0$ derivation, so that ${}\ci
\de:\ul{\rm Hom}_A(\Om^1_A,M) \ra \ul{\rm Der}(A,M)$ is an
isomorphism of graded modules. Here the underline denotes that we
take a sum over homogeneous morphisms of each degree.

In the particular case when $M=A$ one sometimes refers to a
derivation $X: A \ra A$ of degree $k$ as a `vector field of degree
$k$'. Define the {\it graded Lie bracket\/} of two homogeneous
vector fields $X,Y$ by
\begin{equation*}
[X,Y]:=XY-(-1)^{|X||Y|}YX.
\end{equation*}
One checks that $[X,Y]$ is a homogeneous vector field of degree
$|X|+|Y|$. On any commutative graded algebra, there is a canonical
degree $0$ {\it Euler vector field\/} $E$ which acts on a
homogeneous element $f \in A$ via $E(f)=|f|f$. In particular, it
annihilates functions $f \in A^0$ of degree $0$.

Define the {\it de Rham algebra\/} of $A$ to be the free commutative
graded algebra over $A$ on the graded module $\Om^1_A[1]$:
\e
\DR(A):=\Sym_A(\Om^1_A[1])=\ts \bigop_p\La^p \Om^1_A[p].
\label{da2eq1}
\e
We endow $\DR(A)$ with the de Rham operator $\dd$, which is the
unique square-zero derivation of degree $-1$ on the commutative
graded algebra $\DR(A)$ such that for $f \in A$, $\dd(f)=\de(f)[1]
\in \Om^1_A[1]$. Thus $\dd(fg)=\dd(f)g+(-1)^{|f|}f\,\dd(g)$ for $f,g
\in A$ and $\dd(\al \cdot \be)=\dd(\al)\be+(-1)^{|\al|}\al \dd(\be)$
for any two $\al, \be \in \DR(A)$.
\end{dfn}

\begin{rem} 
\label{da2rem1}
The de Rham algebra $\DR(A)$ has two gradings, one
induced by the grading on $A$ and on the module $\Om^1[1]$ and the
other given by $p$ in the decomposition $\DR(A):=\Sym_A(\Om^1_A[1])
\cong \bigop_p \La^p \Om^1_A[p]$. We shall refer to the first
grading as {\it degree} and the second grading as {\it weight}. Thus
the de Rham operator $\dd$ has degree $-1$ and weight $+1$.

Note that the condition that $\dd$ be a derivation of degree $-1$
{\it does not} take into account the additional grading by weight,
but nevertheless this convention does recover the usual de Rham
complex in the classical case when the algebra $A$ is concentrated
in degree $0$.
\end{rem}

\begin{ex} 
\label{da2ex1}
For us, a typical commutative graded algebra
$A=\bigop_{i\le 0} A^{i}$ will have $A^0$ smooth over $\K$ and will
be free over $A^0$ on graded variables $x^{-1}_1,\ab\ldots,\ab
x^{-1}_{m_1},\ab x^{-2}_1,\ldots,x^{-2}_{m_2},
\ldots,x^{-n}_1,\ldots,x^{-n}_{m_{n}}$ where $x^{-i}_j$ has degree
$-i$. Localizing $A^0$ if necessary, we may assume that there exist
$x^0_1,\ldots,x^0_{m_0} \in A^0$ so that
\begin{equation*}
\dd x^0_1,\ldots,\dd x^0_{m_0}
\end{equation*}
form a basis of $\Om^1_{A^0}$.

The graded $A$-module $\Om^1_A[1]$ then has a basis
\begin{equation*}
\dd x^0_1,\ldots,\dd x^0_{m_0},\dd x^{-1}_1,\ldots,
\dd x^{-1}_{m_{-1}},\ldots,\dd x^{-n}_1,\ldots,\dd x^{-n}_{m_{-n}}.
\end{equation*}
Since $\DR(A)=\Sym_A(\Om^1_A[1])$ is free commutative on
$\Om^1_A[1]$ over $A$ and since $\dd x^{-i}_{s}$ and $\dd
x^{-j}_{t}$ have degrees $-i-1$ and $-j-1$ respectively, we have
\begin{equation*}
\dd x^{-i}_{s} \dd x^{-j}_{t}=(-1)^{(-i-1)(-j-1)}\dd
x^{-j}_{t} \dd x^{-i}_{s}.
\end{equation*}
Thus, for example, $\dd x^{-i}_{s}$ and $\dd x^{-i}_{t}$ anticommute
when $-i$ is even and commute when $-i$ is odd. In particular, $\dd
x^{-i}_{s} \dd x^{-i}_{s}= 0$ when $-i$ is even and $\dd x^{-i}_{s}
\dd x^{-i}_{s}\neq 0$ when $-i$ is odd.
\end{ex}

\begin{dfn} 
\label{da2def2}
Given a homogeneous vector field $X$ on $A$ of degree
$|X|$, the {\it contraction operator\/} $\io_X$ on $\DR(A)$ is
defined to be the unique derivation of degree $|X|+1$ such that
$\io_Xf=0$ and $\io_X\dd(f)=X(f)$ for all $f \in A$. We define the
{\it Lie derivative\/} $L_X$ along a vector field $X$ by
\begin{equation*}
L_X=[\io_X,\dd]=\io_X\dd -(-1)^{|X|+1}\dd \io_X=
\io_X\dd+(-1)^{|X|}\dd\io_X.
\end{equation*}
It is a derivation of $\DR(A)$ of degree $|X|$. In particular, the
Lie derivative along $E$ is of degree $0$. Given $f \in A$, we have
\begin{align*}
L_Ef&=\io_E\dd f=E(f)=|f|f, \\
L_E\dd f&=|f|\dd f.
\end{align*}
In particular, the de Rham differential $\dd : A \ra \Om^1_A[1]$ is
injective except in degree $0$. Furthermore, we see that for a
homogeneous form $\al \in \La^p\Om^1_A[p]$
\begin{equation*}
L_E\al=\io_E\dd \al+\dd \io_E \al=(|\al|+p)\al.
\end{equation*}
If $\al$ is in addition de Rham closed, then $\dd \io_E
\al=(|\al|+p)\al$, so if $|\al|+p \neq 0$, then $\al$ is in fact de
Rham exact:
\begin{equation*}
\al=\dd\Bigl(\frac{\io_E\al}{|\al|+p}\Bigr).
\end{equation*}
The top degree of $\La^p\Om^1_A[p]$ is $-p$ and the elements of top
degree are of the form $\al \in \La^p\Om^1_{A^0}[p]$, so a de Rham
closed form $\al$ can fail to be exact only if it lives on $A^0
\subset A$.
\end{dfn}

The following relations between derivations of $\DR(A)$ can be
checked by noting that both sides of an equation have the same
degree and act in the same way on elements of weight $0$ and weight
$1$ in $\DR(A)$.

\begin{lem} 
\label{da2lem1}
Let\/ $X,Y$ be homogeneous vector fields on $A$. Then
we have the following equalities of derivations on $\DR(A)\!:$
\begin{equation*}
[\dd,L_X]=0,\quad
[\io_X,\io_Y]=0,\quad
[L_X,\io_Y]=\io_{[X,Y]},\quad
[L_X,L_Y]=L_{[X,Y]}.
\end{equation*}

\end{lem}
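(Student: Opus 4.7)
The plan is to exploit the reduction hinted at in the lemma's preamble: every operator appearing is a graded derivation of $\DR(A)=\Sym_A(\Om^1_A[1])$, and this graded algebra is generated by $A$ (weight $0$) together with $\dd A\subset\Om^1_A[1]$ (weight $1$), so two graded derivations of the same degree that agree on $A$ and on $\dd A$ must coincide. The graded commutator of two derivations is again a derivation, so the left hand sides of all four identities are derivations.

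First I would check that both sides have matching degree. The four left hand sides have degrees $|X|-1$, $|X|+|Y|+2$, $|X|+|Y|+1$, and $|X|+|Y|$ respectively; on the right, $|\io_{[X,Y]}|=|[X,Y]|+1=|X|+|Y|+1$ and $|L_{[X,Y]}|=|X|+|Y|$, so degrees agree in each equation.

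Next I would dispose of $[\dd,L_X]=0$ by direct expansion rather than evaluation on generators. Using $L_X=\io_X\dd+(-1)^{|X|}\dd\io_X$ and $\dd^2=0$, one computes $\dd L_X=\dd\io_X\dd$ and $L_X\dd=(-1)^{|X|}\dd\io_X\dd$; since $|\dd||L_X|=-|X|$, the graded commutator $\dd L_X-(-1)^{-|X|}L_X\dd$ vanishes. An immediate consequence is the working formula $L_X\dd f=(-1)^{|X|}\dd X(f)$, which I will use below.

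Finally I would evaluate the remaining three identities on $f\in A$ and $\dd f\in\Om^1_A[1]$, using only $\io_Xf=0$, $\io_X\dd f=X(f)$, $L_Xf=X(f)$, and the formula above. For $[\io_X,\io_Y]$ all four evaluations give $0$, since $X(f),Y(f)\in A$ are killed by any $\io$. For $[L_X,\io_Y]$ both sides vanish on $f$, while on $\dd f$ the Koszul signs collapse as $(-1)^{|X|(|Y|+1)+|X|}=(-1)^{|X||Y|}$, yielding $X(Y(f))-(-1)^{|X||Y|}Y(X(f))=[X,Y](f)=\io_{[X,Y]}\dd f$. For $[L_X,L_Y]$, on $f$ one obtains $[X,Y](f)=L_{[X,Y]}f$ at once; on $\dd f$ I would commute $\dd$ past each Lie derivative using the already proved first identity, reducing to $(-1)^{|X|+|Y|}\dd[X,Y](f)=L_{[X,Y]}\dd f$. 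The only real obstacle anywhere is disciplined Koszul sign bookkeeping; no new geometric or algebraic input is required beyond the definitions in Definitions \ref{da2def1} and~\ref{da2def2}.
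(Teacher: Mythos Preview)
Your proposal is correct and follows essentially the same approach as the paper, which merely remarks that one checks degrees match and that both sides agree on weight $0$ and weight $1$ elements of $\DR(A)$; you have simply filled in those details. The only minor deviation is that you handle $[\dd,L_X]=0$ by direct expansion rather than by evaluating on generators, which is a harmless shortcut.
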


\begin{rem} 
\label{da2rem2}
The last statement ensures that the map $X \mapsto
L_X$ determines a dg-Lie algebra homomorphism $\ul{\rm Der}(A) \ra
\ul{\rm Der}(\DR(A))$.
\end{rem}

\subsection{Commutative differential graded algebras}
\label{da22}

\begin{dfn} 
\label{da2def3}
A {\it commutative differential graded algebra\/} or
{\it cdga\/} $(A,\d)$ is a commutative graded algebra $A$ over $\K$, as in \S\ref{da2}, endowed with a square-zero derivation $\d$ of degree 1. Usually we write $A$ rather than $(A,\d)$, leaving $\d$ implicit. Note that the cohomology $H^*(A)$ of $A$ with respect to the differential $\d$ is a commutative graded algebra. Note too that by default all of our cdgas are {\it connective\/} (that is, concentrated in non-positive cohomological degrees).

A {\it morphism of cdgas} is a map of complexes $f: A \rightarrow B$ respecting units and  multiplication. It is a {\it quasi-isomorphism} or {\it weak equivalence} if the underlying map of complexes is so, it is a {\it fibration}  if it is a degree-wise surjection, and it is a {\it cofibration} if it has the left
lifting property with respect to trivial cofibrations. A standard argument (see for instance Goerss and Schemmerhorn \cite[Th.~3.6]{GoSc}) shows that in characteristic zero this choice of weak equivalences and fibrations define a model structure on cdgas. 

We shall assume that all cdgas in this paper are (homotopically) of {\it finite presentation\/} over the ground field $\K$. As for
finitely presented classical $\K$-algebras, this can be formulated
in terms of the preservation of filtered (homotopy) colimits. For
the precise notion, see To\"en and Vezzosi \cite[\S 1.2.3]{ToVe1}. A classical $\K$-algebra that is of finite presentation in the
classical sense {\it need not be\/} of finite presentation when
considered as a cdga. Indeed, Theorem \ref{da4thm1} will show that a cdga $A$ of finite presentation admits a very strong form of finite resolution.
\end{dfn}

In much of the paper, we will work with cdgas of the following form:

\begin{ex} 
\label{da2ex2}
We will explain how to inductively construct a sequence
of cdgas $A(0),A(1),\ldots,A(n)$, where $A(0)$ is a smooth $\K$-algebra,
and $A(k)$ has underlying commutative graded algebra free over $A(0)$ on generators of degrees~$-1,\ldots,-k$.

Begin with a commutative algebra $A(0)$ smooth over $\K$. To make Proposition \ref{da2prop} below hold (which says that the cotangent complex $\bL_{A(k)}$ has a simple description) we assume the cotangent module $\Om^1_{A(0)}$ is a free $A(0)$-module, which can always be achieved by Zariski localizing $A(0)$. Choose a free $A(0)$-module $M^{-1}$ of finite rank together with a map $\pi^{-1} :M^{-1} \ra A(0)$. Define a cdga $A(1)$ whose underlying
commutative graded algebra is free over $A(0)$ with generators given
by $M^{-1}$ in degree $-1$ and with differential $\d$ determined by
the map $\pi^{-1}: M^{-1} \ra A(0)$. By construction, we have
$H^0(A(1)) = A(0)/I$, where the ideal $I\subseteq A(0)$ is the image
of the map~$\pi^{-1}:M^{-1}\ra A(0)$.

Note that $A(1)$ fits in a homotopy pushout diagram of cdgas
\begin{equation*}
\xymatrix@R=14pt@C=90pt{*+[r]{\Sym_{A(0)}(M^{-1})} \ar[r]_(0.7){0_*}
\ar[d]^{\pi^{-1}_*} & *+[l]{A(0)}
\ar[d] \\ *+[r]{A(0)} \ar[r]^(0.7){f^{-1}} & *+[l]{A(1),\!\!{}} }
\end{equation*}
with morphisms $\pi^{-1}_*,0_*$ induced by $\pi^{-1},0:M^{-1}\ra
A(0)$. Write $f^{-1}:A(0)\ra A(1)$ for the resulting map of
algebras.

Next, choose a free $A(1)$-module $M^{-2}$ of finite rank together
with a map $\pi^{-2}: M^{-2}[1] \ra A(1)$. Define a cdga $A(2)$
whose underlying commutative graded algebra is free over $A(1)$ with
generators given by $M^{-2}$ in degree $-2$ and with differential
$\d$ determined by the map $\pi^{-2}: M^{-2}[1] \ra A(1)$. Write
$f^{-2}$ for the resulting map of algebras~$A(1)\ra A(2)$.

As the underlying commutative graded algebra of $A(1)$ was free over
$A(0)$ on generators of degree $-1$, the underlying commutative
graded algebra of $A(2)$ is free over $A(0)$ on generators of
degrees $-1,-2$. Since $A(2)$ is obtained from $A(1)$ by adding
generators in degree $-2$, we have $H^0(A(1))\cong H^0(A(2)) \cong
A(0)/I$.

Note that $A(2)$ fits in a homotopy pushout diagram of cdgas
\begin{equation*}
\xymatrix@R=14pt@C=90pt{*+[r]{\Sym_{A(1)}(M^{-2}[1])} \ar[r]_(0.7){0_*}
\ar[d]^{\pi^{-2}_*} & *+[l]{A(1)} \ar[d] \\
*+[r]{A(1)} \ar[r]^(0.7){f^{-2}} & *+[l]{A(2),\!\!{}} }
\end{equation*}
with morphisms $\pi^{-2}_*,0_*$ induced by $\pi^{-2},0:M^{-2}[1]\ra
A(1)$.

Continuing in this manner inductively, we define a cdga $A(n)=A$
with $A^0=A(0)$ and $H^0(A)=A(0)/I$, whose underlying commutative
graded algebra is free over $A(0)$ on generators of
degrees~$-1,\ldots,-n$.
\end{ex}

\begin{dfn} 
\label{da2def4}
A cdga $A=A(n)$ over $\K$ constructed inductively as in
Example \ref{da2ex2} from a smooth $\K$-algebra $A(0)$ with $\Om^1_{A(0)}$ free of finite rank over $A(0)$, and free finite rank modules $M^{-1},M^{-2},\ldots,\ab M^{-n}$ over $A(0),\ldots,A(n-1)$, will be called a {\it standard form\/} cdga. 

Equivalently, $A$ is of standard form if it is finitely presented, and $A(0)$ is smooth with $\Om_{A(0)}$ a free $A(0)$-module, and the underlying graded algebra of $A$ is freely generated over $A(0)$ by finitely many generators in negative degrees.

If $A$ is of standard form, we will call a cdga $A'$ a {\it
localization\/} of $A$ if $A'=A\ot_{A^0}A^0[f^{-1}]$ for $f\in A^0$,
that is, $A'$ is obtained by inverting $f$ in $A$. Then $A'$ is also
of standard form, with $A^{\prime\, 0}\cong A^0[f^{-1}]$. If $p\in
\Spec H^0(A)$ with $f(p)\ne 0$, we call $A'$ a {\it localization
of\/ $A$ at\/}~$p$.
\end{dfn}

Standard form cdgas are a mild generalization of {\it connective semi-free cdgas}, which are standard form cdgas in which $A(0)$ is a free commutative $\K$-algebra. Semi-free cdgas are among the cofibrant objects in the model structure on $\mathop{\bf cdga}_\K^{\le 0}$, and Proposition \ref{da2prop} below is standard for semi-free cdgas.

\subsection{Cotangent complexes of cdgas}
\label{da23}

\begin{dfn} 
\label{da2def5}
Let $(A,d)$ be a cdga. Then as in Definition
\ref{da2def1}, to the underlying commutative graded algebra $A$ we
associate the module of K\"ahler differentials $\Om^1_A$ with
universal degree 0 derivation $\de:A\ra\Om^1_A$, and the de Rham
algebra $\DR(A)=\Sym_A\bigl(\Om^1_A[1]\bigr)$ in \eq{da2eq1}, with
degree $-1$ de Rham differential~$\dd:\DR(A)\ra\DR(A)$.

The differential $\d$ on $A$ induces a unique differential on
$\Om^1_A$, also denoted $\d$, satisfying $\d\ci\de=\de\ci\d:
A\ra\Om^1_A$, and making $(\Om^1_A,\d)$ into a dg-module. According
to our sign conventions, the differential $\d$ on $\Om^1_A[1]$ is
that on $\Om^1_A$ multiplied by $-1$, so $\d$ on $\Om^1_A[1]$
anti-commutes with the de Rham operator $\dd:A\ra\Om^1[1]$. We
extend the differential $\d$ uniquely to all of $\DR(A)$ by
requiring it to be a derivation of degree $1$ with respect to the
multiplication on~$\DR(A)$.

When $(A,\d)$ is sufficiently nice (as in Example \ref{da2ex2}),
then the K\"ahler differentials $(\Om^1_A,\d)$ give a model for the
{\it cotangent complex\/} $\bL_{(A,\d)}$ of $(A,\d)$. In practice,
we shall always work with such cdgas, so we shall freely identify
$\bL_{(A,\d)}$ and $(\Om^1_A,\d)$. Usually, when dealing with cdgas
and their cotangent complexes we leave $\d$ implicit, and write
$\bL_A,\Om^1_A$ rather than $\bL_{(A,\d)}$ and $(\Om^1_A,\d)$.

Similarly, given a map $A \ra B$ of cdgas, we can define the {\it
relative K\"ahler differentials\/} $\Om^1_{B/A}$, and when the map
$A \ra B$ is nice enough (for example, $B$ is obtained from $A$
adding free generators of some degree and imposing a differential,
as in Example \ref{da2ex2}), then the relative K\"ahler
differentials give a model for the {\it relative cotangent
complex\/}~$\bL_{B/A}$.
\end{dfn}

We recall some basic facts about cotangent complexes:
\begin{itemize}
\setlength{\itemsep}{0pt}
\setlength{\parsep}{0pt}
\item[(i)] Given a map of cdgas $\al:A \ra B$, there is an
induced map $\bL_\al:\bL_A \ra \bL_B$ of $A$-modules, and we
have a (homotopy) fibre sequence
\begin{equation*}
\xymatrix@C=50pt{\bL_A \ot_A B \ar[r]^(0.55){\bL_\al\ot\id_B} &
\bL_B \ar[r] & \bL_{B/A}.}
\end{equation*}
\item[(ii)] Given a (homotopy) pushout square of cdgas
\begin{equation*}
\xymatrix@R=12pt@C=80pt{*+[r]{A} \ar[r] \ar[d] & *+[l]{B} \ar[d] \\
*+[r]{A'} \ar[r] & *+[l]{B',\!\!{}} }
\end{equation*}
we have a {\it base change} equivalence
\e
\bL_{B'/A'} \simeq \bL_{B/A} \ot_B B'.
\label{da2eq2}
\e
\item[(iii)] If $B=\Sym_A(M)$ for a module $M$ over a cdga $A$,
with inclusion morphism $\io:A\hookra B=A\op M\op
S^2M\op\cdots$, we have a canonical equivalence
\begin{equation*}
\bL_{B/A} \simeq B \ot_A M.
\end{equation*}
Also, any $A$-module morphism $\rho:M\ra A$ (such as $\rho=0$)
induces a unique cdga morphism $\rho_*:B\ra A$ with
$\rho_*\vert_{\Sym^0(M)}=\id_A$ and
$\rho_*\vert_{\Sym^1(M)}=\rho$. Then we have a canonical
equivalence
\e
\bL_{A/B}\simeq M[1].
\label{da2eq3}
\e
\end{itemize}

\begin{ex} 
\label{da2ex3}
In Example \ref{da2ex2}, we constructed inductively a
sequence of cdgas $A(0) \ra A(1) \ra \cdots \ra A(n)=A$, where
$A(k)$ was obtained from $A(k-1)$ by adjoining a module of
generators $M^{-k}$ in degree $-k$ and imposing a differential. The
cdga $A(k)$ then fits into a (homotopy) pushout diagram
\e
\begin{gathered}
\xymatrix@R=16pt@C=120pt{*+[r]{B(k-1):=\Sym_{A(k-1)}(M^{-k}[k-1])}
\ar[r]_(0.76){0_*} \ar[d]^{\pi^{-k}_*} & *+[l]{A(k-1)} \ar[d] \\
*+[r]{A(k-1)} \ar[r]^(0.76){f^{-k}} & *+[l]{A(k),} }
\end{gathered}
\label{da2eq4}
\e
in which $\pi^{-k}_*,0_*$ are induced by maps
$\pi^{-k},0:M^{-k}[k-1]\ra A(k-1)$.

We will describe the relative cotangent complexes
$\bL_{A(k)/A(k-1)}$ and hence, inductively, the cotangent complex
$\bL_A$. By equation \eq{da2eq3}, the relative cotangent complex
$\bL_{A(k-1)/B(k-1)}$ for $0_*:B(k-1)\ra A(k-1)$ in \eq{da2eq4} is
given by
\begin{equation*}
\bL_{A(k-1)/B(k-1)}\simeq M^{-k}[k].
\end{equation*}
Thus by base change \eq{da2eq2}, the relative cotangent complex
$\bL_{A(k)/A(k-1)}$ of $f^{-k}:A(k-1) \ra A(k)$ in \eq{da2eq4}
satisfies
\begin{equation*}
\bL_{A(k)/A(k-1)} \simeq A(k) \ot_{A(k-1)}M^{-k}[k].
\end{equation*}
Note in particular that when $i > -k$, we have
\e
H^{i}(\bL_{A(k)/A(k-1)})=0.
\label{da2eq5}
\e

Commutative diagrams of the following form, in which the rows and
columns are fibre sequences, are very useful for computing the
cohomology of $\bL_A$:
\e
\begin{gathered}
\xymatrix@C=35pt@R=14pt{&& A \ot_{A(k)}\bL_{A(k)/A(k-1)} \ar[d] \\
A \ot_{A(k-1)}\bL_{A(k-1)} \ar[r]\ar[d] & \bL_A \ar[r]\ar@{=}[d]
& \bL_{A/A(k-1) \ar[d]} \\
A \ot_{A(k)}\bL_{A(k)} \ar[r]\ar[d] & \bL_A \ar[r] &
\bL_{A/A(k)} \\
A \ot_{A(k)}\bL_{A(k)/A(k-1)}. }
\end{gathered}
\label{da2eq6}
\e

\end{ex}

\begin{prop} 
\label{da2prop}
Let\/ $A=A(n)$ be a standard form cdga constructed
inductively as in Example\/ {\rm\ref{da2ex2}}. Then the restriction
of the cotangent complex\/ $\bL_A$ to\/ $\Spec H^0(A)$ is naturally
represented as a complex of free\/ $H^0(A)$-modules
\e
\begin{gathered}
\xymatrix@C=27pt{ 0 \ar[r] & V^{-n} \ar[r]^{\d^{-n}} & V^{1-n}
\ar[r]^{\d^{1-n}} & \cdots \ar[r]^{\d^{-2}} & V^{-1}
\ar[r]^{\d^{-1}} & V^0 \ar[r] & 0, }
\end{gathered}
\label{da2eq7}
\e
where $V^{-k}=H^{-k}\bigl(\bL_{A(k)/A(k-1)}\bigr)$ is in degree $-k$
and the differential\/ $\d^{-k}:V^{-k}\ra V^{1-k}$ is identified
with the composition
\begin{equation*}
H^{-k}\bigl(\bL_{A(k)/A(k-1)}\bigr)\longra H^{1-k}\bigl(\bL_{A(k-1)}
\bigr)\longra H^{1-k}\bigl(\bL_{A(k-1)/A(k-2)}\bigr),
\end{equation*}
in which\/ $H^{-k}\bigl(\bL_{A(k)/A(k-1)}\bigr)\ra
H^{1-k}\bigl(\bL_{A(k-1)}\bigr)$ is induced by the fibre sequence
\begin{equation*}
A(k) \ot_{A(k-1)} \bL_{A(k-1)}\longra \bL_{A(k)} \longra
\bL_{A(k)/A(k-1)},
\end{equation*}
and\/ $H^{1-k}\bigl(\bL_{A(k-1)}\bigr)\ra H^{1-k}\bigl(
\bL_{A(k-1)/A(k-2)}\bigr)$ is induced by the fibre sequence
\begin{equation*}
A(k-1)\ot_{A(k-2)}\bL_{A(k-2)}\longra\bL_{A(k-1)}\longra
\bL_{A(k-1)/A(k-2)}.
\end{equation*}

\end{prop}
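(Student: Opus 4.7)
The plan is to proceed by induction on $n$, using the fibre sequences associated to the homotopy pushouts \eq{da2eq4} of Example \ref{da2ex3}. In the base case $n=0$, $A=A(0)$ is a smooth $\K$-algebra, so $\bL_A\simeq\Om^1_{A(0)}$ is concentrated in degree $0$. After localizing $A^0$ as in Example \ref{da2ex1} to obtain coordinates $x^0_1,\ldots,x^0_{m_0}$, the differentials $\dd x^0_1,\ldots,\dd x^0_{m_0}$ form a free basis of $\Om^1_{A(0)}$, so the restriction to $\Spec H^0(A)$ is represented by the free $H^0(A)$-module $V^0=\Om^1_{A(0)}\ot_{A(0)}H^0(A)$ in degree $0$, matching \eq{da2eq7} with the convention that the filtration begins at the ground field.

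For the inductive step, assume the proposition for $A(n-1)$, so $\bL_{A(n-1)}|_{\Spec H^0(A(n-1))}$ is represented by a complex $C^\bu$ of free $H^0(A(n-1))$-modules of the stated form in degrees $-(n-1),\ldots,0$. Passing from $A(n-1)$ to $A(n)=A$ adjoins only degree $-n$ generators, so $H^0(A)=H^0(A(n-1))$ and $C^\bu$ also represents $\bL_{A(n-1)}\ot_{A(n-1)}H^0(A)$. Apply the fibre sequence
\begin{equation*}
A\ot_{A(n-1)}\bL_{A(n-1)}\longra\bL_A\longra\bL_{A/A(n-1)}.
\end{equation*}
By Example \ref{da2ex3} and \eq{da2eq5}, $\bL_{A/A(n-1)}\simeq A\ot_{A(n-1)}M^{-n}[n]$ is concentrated in cohomological degree $-n$, and its restriction to $\Spec H^0(A)$ is the free $H^0(A)$-module $V^{-n}=H^0(A)\ot_{A(n-1)}M^{-n}$ placed in degree $-n$. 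Base changing the fibre sequence to $H^0(A)$ presents $\bL_A\ot_A H^0(A)$ as the cone of a map $\phi:V^{-n}[n-1]\longra C^\bu$. Since $V^{-n}[n-1]$ is concentrated in degree $1-n$, such a chain map is determined by a single $H^0(A)$-linear map $V^{-n}\to C^{1-n}=V^{1-n}$ landing in the cocycles, and the cone of $\phi$ is exactly the complex \eq{da2eq7} obtained by prepending $V^{-n}$ in degree $-n$ to $C^\bu$ with attaching differential $\phi$.

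It remains to identify $\phi$ with the two-step composition in the statement. By construction $\phi$ is the connecting map $H^{-n}(\bL_{A/A(n-1)})\to H^{1-n}(\bL_{A(n-1)}\ot H^0(A))$ attached to the first fibre sequence above. Under the inductive description of $C^\bu$, the target of this map identifies with $V^{1-n}$ via the natural morphism $\bL_{A(n-1)}\to\bL_{A(n-1)/A(n-2)}$, whose effect on $H^{1-n}$ is the second constituent of the stated composition. Tracing $\phi$ through these identifications, organized by the diagram \eq{da2eq6}, yields the composition
\begin{equation*}
H^{-n}\bigl(\bL_{A/A(n-1)}\bigr)\longra H^{1-n}\bigl(\bL_{A(n-1)}\bigr)\longra H^{1-n}\bigl(\bL_{A(n-1)/A(n-2)}\bigr),
\end{equation*}
as claimed.

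The main obstacle is precisely this last step: once the inductive representative $C^\bu$ has been produced, confirming that the connecting map $\phi$ really coincides with the stated composition requires careful use of the naturality of fibre sequences and long exact sequences, as organized by the commutative diagram \eq{da2eq6}. The rest of the argument is formal from base change \eq{da2eq2} and the pushout description \eq{da2eq3} of relative cotangent complexes.
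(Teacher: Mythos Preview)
Your proposal is correct and follows essentially the same approach as the paper: induction on $n$, tensoring the fibre sequence $A\ot_{A(n-1)}\bL_{A(n-1)}\ra\bL_A\ra\bL_{A/A(n-1)}$ with $H^0(A)$, and identifying the attaching differential with the connecting morphism. The paper is terser and starts the induction at $n=1$ rather than $n=0$, but the argument is the same; your explicit description of $\bL_A\ot_A H^0(A)$ as the cone on $\phi:V^{-n}[n-1]\ra C^\bu$ and your use of diagram \eq{da2eq6} to unwind the connecting map are exactly what the paper has in mind when it writes ``the connecting morphism is as claimed.''
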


\begin{proof} The proof is almost immediate by induction on $n$. When $n=0$, $\bL_{A(0)}=\Om^1_{A(0)}$ is free of finite rank over $A(0)$ by definition of standard form cdgas, so $V^0=\Om^1_{A(0)}\ot_{A(0)}H^0(A)$ is free of finite rank over $H^0(A)$. For $n=1$, tensor the fibre sequence $A(1) \ot_{A(0)}
\bL_{A(0)} \ra \bL_{A(1)} \ra \bL_{A(1)/A(0)}$ with $H^0(A)$ to get
a fibre sequence $V^0 \ra H^0(A)\ot_{A(1)} \bL_{A(1)} \ra V^{-1}[1]$
in which the connecting morphism $V^{-1} \ra V^0$ is as claimed.

Similarly, assuming we have proved the proposition for
$\bL_{A(n-1)}$, tensor the fibre sequence $A \ot_{A(n-1)}
\bL_{A(n-1)}\ra \bL_A \ra \bL_{A/A(n-1)}$ with $H^0(A)$ to get a
fibre sequence $H^0(A)\ot_{A(n-1)} \bL_{A(n-1)} \ra H^0(A) \ot_A
\bL_A \ra V^{-n}[n]$ in which the connecting morphism is as claimed.
\end{proof}

\begin{dfn} 
\label{da2def6}
Let $A$ be a standard form cdga constructed as in
Example \ref{da2ex2}. We call $A$ {\it minimal\/} at $p \in \Spec
H^0(A)$ if $\d^{-k}\vert_p=0$ for $k=1,\ldots,n$, for $d^{-k}$ the internal differential in $\bL_A$ as in \eq{da2eq7}. That is, the compositions
\e
H^{-k}\bigl(\bL_{A(k)/A(k-1)}\bigr) \longra
H^{1-k}\bigl(\bL_{A(k-1)}\bigr)\longra
H^{1-k}\bigl(\bL_{A(k-1)/A(k-2)}\bigr)
\label{da2eq8}
\e
in the cotangent complexes restricted to $\Spec H^0(A)$ vanish
at $p$ for all $k$.
\end{dfn}

\section{Background from derived algebraic geometry}
\label{da3}

Next we outline the background material from derived algebraic
geometry that we need, aiming in particular to explain the key
notion from Pantev, To\"en, Vaqui\'e and Vezzosi \cite{PTVV} of a
$k$-{\it shifted symplectic structure\/} on a {\it derived\/
$\K$-scheme}, which is central to our paper. There are two main
frameworks for derived algebraic geometry in the literature, due to
To\"en and Vezzosi \cite{Toen,ToVe1,ToVe2} and Lurie
\cite{Luri1,Luri2}, which are broadly interchangeable in
characteristic zero. Following our principal reference \cite{PTVV},
we use the To\"en--Vezzosi version.

To understand this paper (except for a few technical details), one
does not really need to study derived algebraic geometry in any
depth, or to know what a derived stack is. The main point for us is
that {\it a derived\/ $\K$-scheme is a geometric space locally
modelled on $\bSpec A$ for $A$ a cdga over\/} $\K$, just as a
classical $\K$-scheme is a space locally modelled on $\Spec A$ for
$A$ a commutative $\K$-algebra; though the meaning of `locally
modelled' is more subtle in the derived than the classical setting.
Readers who are comfortable with this description of derived schemes
can omit \S\ref{da31} and~\S\ref{da32}.

As in \S\ref{da2}, all cdgas in this paper are {\it connective\/} (graded in nonpositive degrees), as these are the allowed local models in derived algebraic geometry.

\subsection{Derived stacks}
\label{da31}

We will use To\"en and Vezzosi's theory of derived algebraic
geometry \cite{Toen,ToVe1,ToVe2}. We give a brief outline, to fix
notation. Fix a base field $\K$, of characteristic zero. In \cite[\S
3]{Toen}, \cite[\S 2.1]{ToVe1}, To\"en and Vezzosi define an
$\iy$-category of ({\it higher\/}) $\K$-{\it stacks\/} $\St_\K$.
Objects $X$ in $\St_\K$ are $\iy$-functors
\begin{equation*}
X:\{\text{commutative $\K$-algebras}\}\longra\{\text{simplicial sets}\}
\end{equation*}
satisfying sheaf-type conditions. They also define a full
$\iy$-subcategory $\Art_\K\subset\St_\K$ of ({\it higher\/}) {\it
Artin\/ $\K$-stacks}, with better geometric properties.

Classical $\K$-schemes and algebraic $\K$-spaces may be written as
functors
\begin{equation*}
X:\{\text{commutative $\K$-algebras}\}\longra\{\text{sets}\},
\end{equation*}
and classical Artin $\K$-stacks may be written as functors
\begin{equation*}
X:\{\text{commutative $\K$-algebras}\}\longra\{\text{groupoids}\}.
\end{equation*}
Since simplicial sets (an $\iy$-category) generalize both sets (a
1-category) and groupoids (a 2-category), higher $\K$-stacks
generalize $\K$-schemes, algebraic $\K$-spaces, and Artin
$\K$-stacks.

To\"en and Vezzosi define the $\iy$-category $\dSt_\K$ of {\it
derived\/ $\K$-stacks\/} (or $D^-$-{\it stacks\/})
\cite[Def.~2.2.2.14]{ToVe1}, \cite[Def.~4.2]{Toen}. Objects $\bX$ in
$\dSt_\K$ are $\iy$-functors
\begin{equation*}
\bX:\{\text{simplicial commutative $\K$-algebras}\}\longra
\{\text{simplicial sets}\}
\end{equation*}
satisfying sheaf-type conditions. They also define a full
$\iy$-subcategory $\dArt_\K\ab\subset\dSt_\K$ of {\it derived
Artin\/ $\K$-stacks}, with better geometric properties.

There is a {\it truncation functor\/} $t_0:\dSt_\K\ra\St_\K$ from
derived stacks to (higher) stacks, which maps
$t_0:\dArt_\K\ra\Art_\K$, and a fully faithful left adjoint {\it
inclusion functor\/} $i:\St_\K\ra\dSt_\K$ mapping
$\Art_\K\ra\dArt_\K$. As $i$ is fully faithful we can regard it as
embedding $\St_\K,\Art_\K$ as full subcategories of $\dSt_\K,
\dArt_\K$. Thus, we can regard classical $\K$-schemes and Artin
$\K$-stacks as examples of derived $\K$-stacks.

The adjoint property of $i,t_0$ implies that for any $X\in\St_\K$
there is a natural morphism $X\ra t_0\ci i(X)$, which is an
equivalence as $i$ is fully faithful, and for any $\bX\in\dSt_\K$
there is a natural morphism $i\ci t_0(\bX)\ra \bX$, which we may
regard as embedding the classical truncation $X=t_0(\bX)$ of $\bX$
as a substack of $\bX$. On notation: we generally write derived
schemes and stacks $\bX,\bY,\ldots$ and their morphisms $\bs f,\bs
g,\ldots$ in bold, and classical schemes, stacks, and higher stacks
$X,Y,\ldots$ and morphisms $f,g,\ldots$ not in bold, and we will
write $X=t_0(\bX)$, $f=t_0(\bs f)$, and so on, for classical
truncations of derived $\bX,\bs f,\ldots.$

\subsection{Derived schemes and cdgas}
\label{da32}

To\"en and Vezzosi \cite{Toen,ToVe1,ToVe2} base their derived
algebraic geometry on {\it simplicial commutative\/ $\K$-algebras},
but we prefer to work with {\it commutative differential graded\/
$\K$-algebras\/} ({\it cdgas\/}). As in \cite[\S 8.1.4]{Luri2} there
is a {\it normalization functor\/}
\begin{equation*}
N:\{\text{simplicial commutative $\K$-algebras}\}\longra
\{\text{cdgas over $\K$}\}
\end{equation*}
which is an equivalence of $\iy$-categories, since $\K$ has
characteristic zero by our assumption in \S\ref{da1}. So, working
with simplicial commutative $\K$-algebras and with cdgas over $\K$
are essentially equivalent.

There is a {\it spectrum functor\/}
\begin{equation*}
\Spec:\{\text{commutative $\K$-algebras}\}\longra\St_\K.
\end{equation*}
An object $X$ in $\St_\K$ is called an {\it affine $\K$-scheme\/} if
it is equivalent to $\Spec A$ for some commutative $\K$-algebra $A$,
and a $\K$-{\it scheme\/} if it may be covered by Zariski open
$Y\subseteq X$ with $Y$ an affine $\K$-scheme. Write $\Sch_\K$ for
the full \ab($\iy$-\nobreak)subcategory of $\K$-schemes in~$\St_\K$.

Similarly, there is a {\it spectrum functor\/}
\begin{equation*}
\bSpec:\{\text{commutative differential graded $\K$-algebras}\}\longra
\dSt_\K.
\end{equation*}
A derived $\K$-stack $\bX$ is called an {\it affine derived\/
$\K$-scheme\/} if $\bX$ is equivalent in $\dSt_\K$ to $\bSpec A$ for
some cdga $A$ over $\K$. (This is true if and only if $\bX$ is
equivalent to $\bSpec{}^\De(A^\De)$ for some simplicial commutative
$\K$-algebra $A^\De$, as the normalization functor $N$ is an
equivalence.) As in To\"en \cite[\S 4.2]{Toen}, a derived $\K$-stack
$\bX$ is called a {\it derived\/ $\K$-scheme\/} if it may be covered
by Zariski open $\bY\subseteq\bX$ with $\bY$ an affine derived
$\K$-scheme. Write $\dSch_\K$ for the full $\iy$-subcategory of
derived $\K$-schemes in $\dSt_\K$. Then
$\dSch_\K\subset\dArt_\K\subset\dSt_\K$.

If $A$ is a cdga over $\K$, there is an equivalence $t_0\ci\bSpec
A\simeq\Spec H^0(A)$ in $\St_\K$. Hence, if $\bX$ is an affine
derived $\K$-scheme then $X=t_0(\bX)$ is an affine $\K$-scheme, and
if $\bX$ is a derived $\K$-scheme then $X=t_0(\bX)$ is a
$\K$-scheme, and the truncation functor maps
$t_0:\dSch_\K\ra\Sch_\K$. Also the inclusion functor maps
$i:\Sch_\K\ra\dSch_\K$. As in \cite[\S 4.2]{Toen}, one can show that
if $\bX$ is a derived Artin $\K$-stack and $t_0(\bX)$ is a
$\K$-scheme, then $\bX$ is a derived $\K$-scheme.

Let $\bX$ be a derived $\K$-scheme, and $X=t_0(\bX)$ the
corresponding classical truncation, a classical $\K$-scheme. Then as
in \S\ref{da31} there is a natural inclusion morphism $i_\bX:X\ra
\bX$, which embeds $X$ as a derived $\K$-subscheme in $\bX$. A good
analogy in classical algebraic geometry is this: let $Y$ be a
non-reduced scheme, and $Y^\red$ the corresponding reduced scheme.
Then there is a natural inclusion $Y^\red\hookra Y$ of $Y^\red$ as a
subscheme of $Y$, and we can think of $Y$ as an infinitesimal
thickening of $Y^\red$. In a similar way, we can regard a derived
$\K$-scheme $\bX$ as an infinitesimal thickening of its classical
$\K$-scheme~$X=t_0(\bX)$.

We shall assume throughout this paper that any derived $\K$-scheme
$\bX$ is {\it locally finitely presented}, which means that it can be
covered by Zariski open affine $\bY\simeq\bSpec A$, where $A$ is a
cdga over $\K$ of finite presentation.

Points $x$ of a derived $\K$-scheme $\bX$ are the same as points $x$
of $X=t_0(\bX)$. If $A$ is a standard form cdga, as in Example
\ref{da2ex2}, and $A'=A\ot_{A^0}A^0[f^{-1}]$ is a localization of
$A$, as in Definition \ref{da2def4}, then $\bSpec A'$ is the Zariski
open subset of $\bSpec A$ where $f\ne 0$. If $A'$ is a localization
of $A$ at $p\in\Spec (H^0(A))=t_0(\bSpec A)$, then $\bSpec A'$ is a
Zariski open neighbourhood of $p$ in~$\bSpec A$.

A morphism $\bs f:\bX\ra\bY$ of derived $\K$-schemes is called {\it \'etale\/} if it is Zariski locally modelled on $\bSpec \phi:\bSpec A\ra\bSpec B$, for $\phi:B\ra A$ a morphism of cdgas over $\K$ such that $H^0(\phi):H^0(B)\ra H^0(A)$ is \'etale in the classical sense and $H^i(\phi):H^i(B)\ra H^i(A)$ induces an isomorphism $H^i(B)\ra H^i(A)\ot_{H^0(A)}H^0(B)$ for all $i<0$. In particular, $t_0(\bs f):t_0(\bX)\ra t_0(\bY)$ is an \'etale morphism of classical schemes.

Since the $\iy$-category of affine derived $\K$-schemes is a
localization of the $\iy$-category of cdgas over $\K$ at
quasi-isomorphisms, morphisms of affine derived $\K$-schemes need
not lift to morphisms of the corresponding cdgas. We can ask:

\begin{quest} 
\label{da3quest}
Suppose $A,B$ are standard form cdgas over $\K,$ as
in Example\/ {\rm\ref{da2ex2},} and\/ $\bs f:\bSpec A\ra\bSpec B$ is
a morphism in the $\iy$-category $\dSch_\K$. Then
\begin{itemize}
\setlength{\itemsep}{0pt}
\setlength{\parsep}{0pt}
\item[{\bf(a)}] does there exist a morphism of cdgas $\al:B\ra A$ (that is, a strict morphism of cdgas, not a morphism in the $\iy$-category) with\/ $\bs f\simeq \bSpec\al${\rm ?}
\item[{\bf(b)}] for each\/ $p\in\Spec H^0(A),$ does there exist a
localization $A'$ of\/ $A$ at\/ $p$ and a (strict) morphism of cdgas
$\al:B\ra A'$ with\/ $\bs f\vert_{\bSpec A'}\simeq
\bSpec\al${\rm ?}
\end{itemize}
\end{quest}

One can show that:
\begin{itemize}
\setlength{\itemsep}{0pt}
\setlength{\parsep}{0pt}
\item[(i)] For general $A,B$, the answers to Question
\ref{da3quest}(a),(b) may both be no.
\item[(ii)] If $A$ is general, but for $B$, the smooth
$\K$-scheme $\Spec B^0$ is isomorphic to an affine space
$\bA^n$, then the answers to Questions \ref{da3quest}(a),(b) are
both yes. Indeed, the condition that $\Spec B^0$ be isomorphic to an affine
space ensures that $B$ is a cofibrant as a cdga, and therefore every map
$B \rightarrow A$ in the $\infty$-category of of cdgas is represented by a strict map
out of $B$.

\item[(iii)] If $A$ is general, but for $B$ the smooth
$\K$-scheme $\Spec B^0$ is isomorphic to a Zariski open set in
an affine space $\bA^n$, then the answer to Question
\ref{da3quest}(a) may be no, but to Question \ref{da3quest}(b)
is yes.
\item[(iv)] For general $A,B$, the answer to Question
\ref{da3quest}(b) at $p$ is yes if and only if there exists a
localization $A^{\prime\, 0}=A^0[g^{-1}]$ of $A^0$ at $p$ and a
morphism of $\K$-algebras $\al^0:B^0\ra A^{\prime\, 0}$ such
that the following commutes:
\begin{equation*}
\xymatrix@C=160pt@R=13pt{ *+[r]{\Spec \bigl(H^0(A)[g^{-1}]\bigr)}
\ar[r]_(0.56){t_0(\bs f)\vert_{\Spec(H^0(A)[g^{-1}])}} \ar[d]^{\rm inc}
& *+[l]{\Spec H^0(B)} \ar[d]_{\rm inc} \\ *+[r]{\Spec A^{\prime\, 0}}
\ar[r]^(0.56){\Spec \al^0} & *+[l]{\Spec B^0.\!{}} }
\end{equation*}
That is, if $\bs f$ lifts to a morphism $B^0\ra A^0$, then
(possibly after further localization) it lifts to a morphism
$B^k\ra A^k$ for all~$k$.
\end{itemize}

\begin{rem} 
\label{da3rem}
Our notion of {\it standard form\/} cdgas is a
compromise. We chose to start the induction in Example \ref{da2ex2}
with a smooth $\K$-algebra $A^0$. This ensured that cotangent
complexes of $\bL_A$ behave well, as in \S\ref{da23}. We could
instead have adopted one of the stronger conditions that $\Spec A^0$
is isomorphic to $\bA^n$, or to a Zariski open subset of $\bA^n$.
This would give better lifting properties of morphisms of derived
$\K$-schemes, as in (ii),(iii) above. However, requiring $\Spec
A^0\subseteq\bA^n$ open does not fit well with the notion of $A$
{\it minimal\/} in Definition \ref{da2def6}, and Theorem
\ref{da4thm1} would be false for these stronger notions of standard
form cdga.
\end{rem}

\subsection{Cotangent complexes of derived schemes and stacks}
\label{da33}

We discuss {\it cotangent complexes\/} of derived schemes and
stacks, following To\"en and Vezzosi \cite[\S 1.4]{ToVe1}, \cite[\S
4.2.4--\S 4.2.5]{Toen}, \cite{Vezz1}, and Lurie \cite[\S
3.4]{Luri1}. We will restrict our attention to derived Artin
$\K$-stacks $\bX$, rather than general derived $\K$-stacks, which
ensures in particular that cotangent complexes exist.

Let $\bX$ be a derived Artin $\K$-stack. In classical algebraic
geometry, cotangent complexes $\bL_X$ lie in $D(\qcoh(X))$, but in
derived algebraic geometry they lie in a dg category
$L_\qcoh(\bX)$ defined by To\"en \cite[\S 3.1.7, \S 4.2.4]{Toen},
which is a generalisation of  $D(\qcoh(X))$. Here are some properties of
these:
\begin{itemize}
\setlength{\itemsep}{0pt}
\setlength{\parsep}{0pt}
\item[(i)] If $\bX$ is a derived Artin $\K$-stack then $L_\qcoh(\bX)$ is
a pre-triangulated dg category with a t-structure whose heart is the
abelian category $\qcoh(t_{0}(\bX))$ of quasicoherent sheaves on the classical
truncation $t_{0}(\bX)$ of $\bX$.
In particular, if $\bX=i(X)$ is a classical Artin $\K$-stack or $\K$-scheme
then $L_\qcoh(X)\simeq D(\qcoh(X))$, but in general
$L_\qcoh(\bX)\not\simeq D(\qcoh(t_{0}(\bX)))$.
\item[(ii)] If $\bs f:\bX\ra\bY$ is a morphism of derived Artin
$\K$-stacks it induces a pullback functor $\bs
f^*:L_\qcoh(\bY)\ra L_\qcoh(\bX)$, analogous to the left derived
pullback functor $f^*:D(\qcoh(Y))\ra D(\qcoh(X))$ in the
classical case.
\item[(iii)] If $A$ is a cdga over $\K$ and $\bX=\bSpec A$ is the corresponding derived affine $\K$-scheme then $L_\qcoh(\bX)\simeq\text{dgmod-}A$ is the pre-triangulated dg category of dg-modules over $A$. In contrast, $D(\qcoh(\bX))\simeq D(\text{mod-}H^0(A))$, which depends only on the classical truncation $X=t_0(\bX)$.
\item[(iv)] There is a notion of when a complex $\cE^\bu$ in $L_\qcoh(\bX)$ is {\it perfect}. When $\bX=\bSpec A$ is affine as in (iii), perfect complexes in $L_\qcoh(\bX)$ correspond to {\it perfect dg-modules\/} in $\text{dgmod-}A$, that is, to modules which may be built from finitely many copies of $A[k]$ for $k\in\Z$ by repeated extensions and splitting of idempotents.
\end{itemize}

If $\bX$ is a derived Artin $\K$-stack, then To\"en and Vezzosi
\cite[\S 4.2.5]{Toen}, \cite[\S 1.4]{ToVe1} or Lurie \cite[\S
3.2]{Luri1} define an ({\it absolute\/}) {\it cotangent complex\/}
$\bL_\bX$ in $L_\qcoh(\bX)$. If $\bs f:\bX\ra\bY$ is a morphism of
derived Artin stacks, they construct a morphism $\bL_{\bs f}:\bs
f^*(\bL_\bY)\ra\bL_\bX$ in $L_\qcoh(\bX)$, and the ({\it
relative\/}) {\it cotangent complex\/} $\bL_{\bX/\bY}$ is defined to
be the cone on this, giving the distinguished triangle
\begin{equation*}
\xymatrix@C=30pt{ \bs f^*(\bL_\bY) \ar[r]^(0.55){\bL_{\bs f}} &
\bL_\bX \ar[r] & \bL_{\bX/\bY} \ar[r] & \bs f^*(\bL_\bY)[1]. }
\end{equation*}

Here are some properties of these:
\begin{itemize}
\setlength{\itemsep}{0pt}
\setlength{\parsep}{0pt}
\item[(a)] If $\bX$ is a derived $\K$-scheme then
$h^k(\bL_\bX)=0$ for $k>0$. Also, if $X=t_0(\bX)$ is the
corresponding classical $\K$-scheme, then $h^0(\bL_\bX)\cong
h^0(\bL_X)\cong\Om_X$ under the equivalence $\qcoh(\bX)\simeq
\qcoh(X)$.
\item[(b)] Let $\bX\,{\buildrel\bs f\over\longra}\,\bY\,
{\buildrel\bs g\over\longra}\,\bZ$ be morphisms of derived Artin
$\K$-stacks. Then by Lurie \cite[Prop.~3.2.12]{Luri1} there is a
distinguished triangle in~$L_\qcoh(\bX)$:
\begin{equation*}
\xymatrix@C=30pt{ \bs f^*(\bL_{\bY/\bZ}) \ar[r] &
\bL_{\bX/\bZ} \ar[r] & \bL_{\bX/\bY} \ar[r] & \bs f^*(\bL_{\bY/\bZ})[1]. }
\end{equation*}
\item[(c)] Suppose we have a Cartesian diagram of derived Artin
$\K$-stacks:
\begin{equation*}
\xymatrix@C=90pt@R=14pt{ *+[r]{\bW} \ar[r]_(0.3){\bs f} \ar[d]^{\bs e}
& *+[l]{\bY} \ar[d]_{\bs h} \\
*+[r]{\bX} \ar[r]^(0.7){\bs g} & *+[l]{\bZ.\!\!{}} }
\end{equation*}
Then by To\"en and Vezzosi \cite[Lem.s 1.4.1.12 \&
1.4.1.16]{ToVe1} or Lurie \cite[Prop.~3.2.10]{Luri1} we have
{\it base change isomorphisms\/}:
\e
\bL_{\bW/\bY}\cong\bs e^*(\bL_{\bX/\bZ}),\qquad
\bL_{\bW/\bX}\cong\bs f^*(\bL_{\bY/\bZ}).
\label{da3eq1}
\e
Note that the analogous result for classical $\K$-schemes
requires $g$ or $h$ to be flat, but \eq{da3eq1} holds with no
flatness assumption.
\item[(d)] Suppose $\bX$ is a {\it locally finitely presented\/}
derived Artin $\K$-stack \cite[\S 2.2.3]{ToVe1} (also called
{\it locally of finite presentation\/} in \cite[\S 3.1.1]{Toen},
and {\it fp-smooth\/} in \cite[\S 4.4]{ToVe2}). Then $\bL_\bX$
is a perfect complex in $L_\qcoh(\bX)$.

If $\bs f:\bX\ra\bY$ is a morphism of locally finitely presented
derived Artin $\K$-stacks, then $\bL_{\bX/\bY}$ is perfect.
\item[(e)] Suppose $\bX$ is a derived $\K$-scheme, $X=t_0(\bX)$
its classical truncation, and $i_\bX:X\hookra\bX$ the natural
inclusion. Then Sch\"urg, To\"en and Vezzosi
\cite[Prop.~1.2]{STV} show that $h^k(\bL_{X/\bX})=0$ for $k\ge
-1$, and deduce that $\bL_{i_\bX}:i_\bX^*(\bL_\bX)\ra \bL_X$ in
$D(\qcoh(X))$ is an {\it obstruction theory\/} on $X$ in the
sense of Behrend and Fantechi \cite{BeFa}.
\end{itemize}

Now suppose $A$ is a cdga over $\K$, and $\bX$ a derived $\K$-scheme
with $\bX\simeq\bSpec A$ in $\dSch_\K$. Then we have an equivalence
of triangulated categories $L_\qcoh(\bX)\simeq \ab D(\mathop{\rm
mod}A)$, where $D(\mathop{\rm mod} A)$ is the derived category of
dg-modules over $A$. This equivalence identifies cotangent complexes
$\bL_\bX\simeq\bL_A$. If $A$ is of standard form, then as in
\S\ref{da23} the K\"ahler differentials $\Om^1_A$ are a model for
$\bL_A$ in $D(\mathop{\rm mod}A)$, and Proposition \ref{da2prop}
gives a simple explicit description of $\Om^1_A$. Thus, if $\bX$ is
a derived $\K$-scheme with $\bX\simeq\bSpec A$ for $A$ a standard
form cdga, we can understand $\bL_\bX$ well. We will use this to do
computations with $k$-{\it shifted\/ $p$-forms\/} and $k$-{\it
shifted closed\/ $p$-forms\/} on $\bX$, as in~\S\ref{da34}.

\subsection{\texorpdfstring{$k$-shifted symplectic structures on derived stacks}{k-shifted symplectic structures on derived stacks}}
\label{da34}

We now outline the main ideas of our principal reference Pantev,
To\"en, Vaqui\'e and Vezzosi \cite{PTVV}. Let $X$ be a classical
smooth $\K$-scheme. Its tangent and cotangent bundles $TX,T^*X$ are
vector bundles on $X$. A $p$-{\it form\/} on $X$ is a global section
$\om$ of $\La^pT^*X$. There is a de Rham differential
$\dd:\La^pT^*X\ra \La^{p+1}T^*X$, which is a morphism of sheaves of
$\K$-vector spaces but not of sheaves of $\O_X$-modules, and induces
$\dd:H^0(\La^pT^*X)\ra H^0(\La^{p+1}T^*X)$. A {\it closed\/
$p$-form\/} is $\om\in H^0(\La^pT^*X)$ with $\dd\om=0$. A {\it
symplectic form\/} on $X$ is a closed 2-form $\om$ such that the
induced morphism $\om:TX\ra T^*X$ is an isomorphism.

The {\it derived loop stack\/} $\cL X$ of $X$ is the fibre product
$X\t_{\De_X,X\t X,\De_X}X$ in $\dSch_\K$, where $\De_X:X\ra X\t X$
is the diagonal map. When $X$ is smooth, $\cL X$ is a quasi-smooth
derived $\K$-scheme. It is interpreted in derived algebraic geometry
as $\cL X=\R{\bf Map}({\cal S}^1,X)$, the moduli stack of `loops' in
$X$. Here the circle ${\cal S}^1$ may be thought of as the
simplicial complex $\xymatrix@C=15pt{\bu \ar@<.3ex>@/^4pt/[r]
\ar@<-.3ex>@/_4pt/[r] & \bu}$, so a map from ${\cal S}^1$ to $X$
means two points $x,y$ in $X$, corresponding to the two vertices
`$\bu$' in ${\cal S}^1$, plus two relations $x=y$, $x=y$
corresponding to the two edges `$\ra$' in ${\cal S}^1$. This agrees
with $\cL X$, as points of $X\t_{\De_X,X\t X,\De_X}X$ correspond to
points $x,y$ in $X$ satisfying the relation $(x,x)=(y,y)$ in $X\t
X$. Note that in derived algebraic geometry, imposing a relation
twice is not the same as imposing it once, as the derived structure
sheaf records the relations in its simplicial structure.

Consider the projection $p: \cL X=X\t_{X\t X}X \rightarrow X$, where $X$ is a classical smooth $\K$-scheme. By the Hochschild--Kostant--Rosenberg Theorem, there is a decomposition $p_{*}(\O_{\cL X})\cong\bigoplus_{p} \La^pT^*X[p]$
in $D(\qcoh(X))$. Thus, $p$-forms on $X$ may be interpreted as `functions on the loop space $\cL X$ of weight $p\,$', while closed $p$-forms can be interpreted as `${\cal S}^1$-invariant functions on the loop space $\cL X$ of weight~$p\,$' by identifying the ${\cal S}^{1}$-action with the de Rham differential. See To\"en and Vezzosi \cite{ToVe3} and Ben-Zvi and Nadler~\cite{BZNa}.

Now let $\bX$ be a locally finitely presented derived Artin
$\K$-stack. The aim of Pantev et al.\ \cite{PTVV} is to define good
notions of $p$-{\it forms}, {\it closed\/ $p$-forms}, and {\it
symplectic structures\/} on $\bX$, and show that these occur
naturally on certain derived moduli stacks. The rough idea is as
above: we form the derived loop stack $\cL \bX:=\bX\t_{\De_\bX,\bX\t
\bX,\De_\bX}\bX$ in $\dSch_\K$, and then (closed) $p$-forms on $X$
are `(${\cal S}^1$-invariant) functions on $\cL\bX$ of
weight~$p\,$'.

However, the problem is more complicated than the classical smooth
case in two respects. First, as the $p$-forms $\La^p\bL_\bX$ are a
complex, rather than a vector bundle, one should consider cohomology
classes $H^k(\bX,\La^p\bL_\bX)$ for $k\in\Z$ rather than just global
sections $\om\in H^0(\bX,\La^pT^*\bX)$. This leads to the idea of a
$p$-{\it form of degree\/ $k$ on\/} $\bX$ for $p\ge 0$ and $k\in\Z$,
which is roughly an element of $H^k(\bX,\La^p\bL_\bX)$. Essentially,
$\O_{\cL\bX}$ has two gradings, the cohomological grading $k$ and the grading by 
weight $p$.

Secondly, as the ${\cal S}^1$-action on $\cL\bX=\R{\bf Map}({\cal
S}^1,\bX)$ by rotating the domain ${\cal S}^1$ is only up to
homotopy, to be `${\cal S}^1$-invariant' is not a property of a
function on $\cL\bX$, but an extra
structure. As an analogy, if an algebraic $\K$-group $G$ acts on a
$\K$-scheme $V$, then for a vector bundle $E\ra V$ to be
$G$-invariant is not really a property of $E$: the right question to
ask is whether the $G$-action on $V$ lifts to a $G$-action on $E$,
and there can be many such lifts, each a different way for $E$ to be
$G$-invariant. Similarly, the definition of `closed $p$-form on
$\bX$' in \cite{PTVV} is not just a $p$-form satisfying an extra
condition, but a $p$-form with extra data, a `closing structure',
satisfying some conditions.

In fact, for technical reasons, for a general locally finitely
presented derived Artin $\K$-stack $\bX$, Pantev et al.\ \cite{PTVV}
do not define $p$-forms as elements of $H^{-p}(\O_{\cL\bX})$, and so
on, as we have sketched above. Instead, they first define explicit
notions of (closed) $p$-forms on affine derived $\K$-schemes
$\bY=\bSpec A$ which are spectra of commutative differential graded
algebras (cdgas) $A$, and show these satisfy \'etale descent. Then for general $\bX$, $k$-shifted (closed) $p$-forms are defined as a mapping stack; basically, a $k$-shifted (closed) $p$-form $\om$ on $\bX$ is the functorial choice for all $\bY,\bs f$ of a $k$-shifted (closed) $p$-form $\bs f^*(\om)$ on $\bY$ whenever $\bY=\bSpec A$ is affine and $\bs f:\bY\ra\bX$ is a morphism.

The families (simplicial sets) of $p$-forms and of closed $p$-forms
of degree $k$ on $\bX$ are written $\cA^p_\K(\bX,k)$ and
$\cA^{p,\cl}_\K(\bX,k)$, respectively. There is a morphism
$\pi:\cA^{p,\cl}_\K(\bX,k)\ra\cA^p_\K(\bX,k)$, which is in general
neither injective nor surjective.

A 2-form $\om^0$ of degree $k$ on $\bX$ induces a morphism
$\om^0:\bT_\bX\ra\bL_\bX[k]$ in $L_\qcoh(\bX)$. We call $\om^0$ {\it
nondegenerate\/} if $\om^0:\bT_\bX\ra\bL_\bX[k]$ is an equivalence.
As in \cite[Def.~0.2]{PTVV}, a closed 2-form $\om$ of degree $k$ on
$\bX$ for $k\in\Z$ is called a $k$-{\it shifted symplectic
structure\/} if the corresponding 2-form $\om^0=\pi(\om)$ is
nondegenerate.

A 0-shifted symplectic structure on a classical $\K$-scheme is a
equivalent to a classical symplectic structure. Pantev et al.\
\cite{PTVV} construct $k$-shifted symplectic structures on several
classes of derived moduli stacks. In particular, if $Y$ is a
Calabi--Yau $m$-fold and $\bs\cM$ a derived moduli stack of coherent
sheaves or perfect complexes on $Y$, then $\bs\cM$ has a
$(2-m)$-shifted symplectic structure.

\section{Local models for derived schemes}
\label{da4}

The next theorem, based on Lurie \cite[Th.~8.4.3.18]{Luri2} and
proved in \S\ref{da41}, says every derived $\K$-scheme $\bX$ is
Zariski locally modelled on $\bSpec A$ for $A$ a minimal standard
form cdga. Recall that all derived $\K$-schemes in this paper are
assumed {\it locally finitely presented}. A morphism $\bs
f:\bX\ra\bY$ in $\dSch_\K$ is called a {\it Zariski open
inclusion\/} if $\bs f$ is an equivalence from $\bX$ to a Zariski
open derived $\K$-subscheme $\bY'\subseteq\bY$. A morphism of cdgas
$\al:B\ra A$ is a {\it Zariski open inclusion\/} if
$\bSpec\al:\bSpec A\ra\bSpec B$ is a Zariski open inclusion.

\begin{thm} 
\label{da4thm1}
Let\/ $\bX$ be a locally finitely presented derived\/ $\K$-scheme, and\/
$x\in\bX$. Then there exist a standard form cdga $A$ over $\K$ which
is minimal at a point\/ $p\in\Spec H^0(A),$ in the sense of
Example\/ {\rm\ref{da2ex2}} and Definitions\/ {\rm\ref{da2def4}}
and\/ {\rm\ref{da2def6},} and a morphism $\bs f:\bSpec A\ra\bX$ in
$\dSch_\K$ which is a Zariski open inclusion with\/~$\bs f(p)=x$.
\end{thm}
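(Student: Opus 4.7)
The plan is to construct $A$ by a degree-by-degree induction, adjoining at each stage a free module of generators of the minimal rank at the chosen point, following the strategy of minimal free resolutions of cdgas as in Lurie~\cite[Th.~8.4.3.18]{Luri2}. First I would reduce to an affine setting: since $\bX$ is locally finitely presented, after shrinking Zariski-locally around $x$ we may replace $\bX$ by $\bSpec B$ for some finitely presented (not yet standard form) cdga $B$, with $x$ corresponding to a $\K$-point $q\in\Spec H^0(B)$. As $\bL_B$ is perfect, $\bL_B\vert_q$ is a bounded complex of finite-dimensional $\K$-vector spaces concentrated in degrees $[-n,0]$ for some~$n$.

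Then I would build inductively a sequence of standard form cdgas $A(0)\ra A(1)\ra\cdots\ra A(n)=A$ with compatible cdga morphisms $\phi(k):A(k)\ra B$. For the base case, take $A(0)$ to be a polynomial $\K$-algebra on $m_0:=\dim_\K h^0(\bL_B\vert_q)$ variables, with $\phi(0)$ defined by lifting a basis of $T_q^*X$ to elements of $B^0$; after localizing $A(0)$ there is a unique point $p$ above $x$ with $\Om^1_{A(0)}\vert_p\ra h^0(\bL_B\vert_q)$ an isomorphism. Inductively, given $\phi(k-1):A(k-1)\ra B$, the fibre sequence
\[
\phi(k-1)^*\bL_{A(k-1)}\ra\bL_B\ra\bL_{B/A(k-1)}
\]
at $p,q$ identifies an obstruction space $h^{-k}(\bL_{B/A(k-1)}\vert_q)$ of some dimension $m_k$. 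Choose a free $A(k-1)$-module $M^{-k}$ of rank exactly $m_k$ and a map $\pi^{-k}:M^{-k}[k-1]\ra A(k-1)$ whose class represents a basis of this obstruction space under the natural edge map; form $A(k)$ via the pushout of Example~\ref{da2ex3}, and lift $\phi(k-1)$ to $\phi(k):A(k)\ra B$ using that the new generators cofibrantly kill precisely the chosen obstruction classes. By Proposition~\ref{da2prop}, the minimal-rank choice forces $\d^{-k}\vert_p=0$, so $A(k)$ is minimal at $p$ in degree~$-k$.

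After $n$ steps the relative cotangent complex $\bL_{B/A}\vert_q$ vanishes, so $\phi:=\phi(n):A\ra B$ induces a quasi-isomorphism on cotangent complexes restricted to $p,q$ together with an isomorphism $H^0(A)\ra H^0(B)$ in a Zariski neighborhood of $p$. By the derived infinitesimal criterion, $\bs f:=\bSpec\phi$ is étale at $p$, and combined with the induced isomorphism on classical truncations this upgrades to $\bs f$ being a Zariski open inclusion sending $p$ to~$x$. The main technical obstacle is the inductive lifting step: realizing each obstruction class as a genuine map $\pi^{-k}$ defined over a Zariski neighborhood of $p$ (rather than merely at the closed fibre) may require further localization of $A^0$ at each stage, but this is harmless since localization preserves standard form and only shrinks the open neighborhood of $p$, while the process stabilizes after the finitely many steps needed to exhaust~$\bL_B\vert_q$.
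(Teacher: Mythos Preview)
Your inductive strategy of adjoining free generators degree by degree, with ranks chosen minimally at the point, is exactly the paper's approach, and your remarks on localization at each step are on target. However, there is a genuine gap at the base of the induction.

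You take $A(0)$ to be a polynomial $\K$-algebra on $m_0=\dim_\K h^0(\bL_B\vert_q)$ variables, mapped to $B$ by lifting a basis of the Zariski cotangent space. After localization this makes $\Spec H^0(B)\to\Spec A(0)$ unramified at $q$, but it does \emph{not} make $A(0)\to H^0(B)$ surjective, even Zariski locally. For a concrete failure, let $\bX$ be a classical smooth affine open $U$ in an elliptic curve over~$\K$. Then $m_0=1$, your $A(0)$ is a localization of $\K[t]$, and $\phi(0):A(0)\to\O(U)$ corresponds to an \'etale map $U\to\bA^1$. Since $\bL_{B/A(0)}\vert_q=0$ already, your induction terminates at $A=A(0)$. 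But no Zariski open in an elliptic curve is isomorphic to a Zariski open in $\bA^1$, so $H^0(A)\to H^0(B)$ is never an isomorphism after localization, contrary to your claim. Consequently $\phi:A\to B$ is only \'etale, $\bSpec\phi:\bSpec B\to\bSpec A$ goes the wrong way and is not invertible, and you obtain no morphism $\bSpec A\to\bX$ at all.

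The paper avoids this by allowing $A(0)$ to be an arbitrary smooth $\K$-algebra of dimension $m_0$, chosen so that $\Spec H^0(B)$ embeds in $\Spec A(0)$ as a closed subscheme; then $A(0)\to H^0(B)$ is surjective from the outset, and after step~$1$ one has $H^0(A(1))\cong H^0(B)$, which persists for all $k\ge 1$. With this in place the induction produces a genuine equivalence $A\simeq B$ (not merely an \'etale map), and the Zariski open inclusion $\bSpec A\hookrightarrow\bX$ follows by composing with its inverse. Remark~\ref{da3rem} notes explicitly that Theorem~\ref{da4thm1} would fail if one insisted that $\Spec A(0)$ be Zariski open in affine space, which is precisely the restriction your choice of $A(0)$ imposes.
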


We think of $A,\bs f$ in Theorem \ref{da4thm1} as like a coordinate
system on $\bX$ near $x$. As well as being able to choose
coordinates near any point, we want to be able to compare different
coordinate systems on their overlaps. That is, given local
equivalences $\bs f:\bSpec A\ra\bX$, $\bs g:\bSpec B\ra\bX$, we
would like to compare the cdgas $A,B$ on the overlap of their images
in~$\bX$.

This is related to the discussion after Question \ref{da3quest} in
\S\ref{da32}: for general $A,B$ we cannot (even locally) find a cdga
morphism $\al:B\ra A$ with $\bs f\simeq\bs g\ci\bSpec\al$. However,
the next theorem, proved in \S\ref{da42}, shows we can find a third
cdga $C$ and open inclusions $\al:A\ra C$, $\be:B\ra C$ with~$\bs
f\ci\bSpec\al\simeq\bs g\ci\bSpec\be$.

\begin{thm} 
\label{da4thm2}
Let\/ $\bX$ be a locally finitely presented derived\/ $\K$-scheme, $A,B$ be
standard form cdgas over $\K,$ and\/ $\bs f:\bSpec A\ra\bX,$ $\bs
g:\bSpec B\ra\bX$ be Zariski open inclusions in $\dSch_\K$. Suppose
$p\in\Spec H^0(A)$ and\/ $q\in\Spec H^0(B)$ with\/ $\bs f(p)=\bs
g(q)$ in $\bX$. Then there exist a standard form cdga $C$ over $\K$
which is minimal at\/ $r$ in $\Spec H^0(C)$ and morphisms of cdgas
$\al:A\ra C,$ $\be:B\ra C$ which are Zariski open inclusions, such
that\/ $\bSpec\al:r\mapsto p,$ $\bSpec\be:r\mapsto q,$ and\/ $\bs
f\ci\bSpec\al\simeq\bs g\ci\bSpec\be$ as morphisms $\bSpec C\ra\bX$
in~$\dSch_\K$.

If instead\/ $\bs f,\bs g$ are \'etale rather than Zariski open
inclusions, the same holds with\/ $\al,\be$ \'etale rather than
Zariski open inclusions.
\end{thm}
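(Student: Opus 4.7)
The strategy is to realize $C$ as a local chart on the derived intersection of the two existing local charts inside $\bX$. Form the derived fibre product $\bW := \bSpec A \t_{\bs f,\bX,\bs g} \bSpec B$ in $\dSch_\K$. Since Zariski open inclusions are preserved under base change and composition, both projections $\pi_A : \bW \ra \bSpec A$ and $\pi_B : \bW \ra \bSpec B$ are Zariski open inclusions, and the hypothesis $\bs f(p) = \bs g(q)$ determines a point $w \in \bW$ with $\pi_A(w) = p$, $\pi_B(w) = q$. Applying Theorem \ref{da4thm1} to $(\bW, w)$ produces a standard form cdga $C_0$, minimal at some $r_0 \in \Spec H^0(C_0)$, together with a Zariski open inclusion $\bs h : \bSpec C_0 \ra \bW$ with $\bs h(r_0) = w$. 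The compositions $\bs h_A := \pi_A \ci \bs h$ and $\bs h_B := \pi_B \ci \bs h$ are then Zariski open inclusions of derived schemes, and $\bs f \ci \bs h_A \simeq \bs g \ci \bs h_B$ by the universal property of the fibre product.

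The main obstacle is to promote $\bs h_A, \bs h_B$ to honest cdga morphisms $\al : A \ra C$ and $\be : B \ra C$, for some localization $C$ of $C_0$ at $r_0$, because by the discussion (i)--(iv) after Question \ref{da3quest} such lifts fail in general. I invoke criterion (iv): the classical truncation $t_0(\bs h_A) : \Spec H^0(C_0) \hookrightarrow \Spec H^0(A)$ is itself a Zariski open inclusion, cut out by inverting some $\bar g \in H^0(A)$; lifting $\bar g$ to $g \in A^0$ and localizing $C_0$ at the image of $\bar g$ in $C_0^0$ (preserving standard form and minimality at $r_0$ by Definitions \ref{da2def4} and \ref{da2def6}) produces a $C_1$ in which the classical commuting square demanded by (iv) can be realized using smoothness of $A^0$ and $C_1^0$ together with the Zariski-open structure. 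Criterion (iv) then yields the desired cdga morphism $\al : A \ra C_1$ lifting $\bs h_A\vert_{\bSpec C_1}$. Applying the same argument to $\bs h_B$ with a further Zariski localization $C$ of $C_1$ at $r_0$ gives $\be : B \ra C$. Since $\bSpec \al$ and $\bSpec \be$ are Zariski open inclusions of derived schemes, $\al$ and $\be$ are Zariski open inclusions of cdgas, and $\bs f \ci \bSpec \al \simeq \bs g \ci \bSpec \be$ is inherited from $\bs f \ci \bs h_A \simeq \bs g \ci \bs h_B$.

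The \'etale variant runs identically. \'Etale morphisms are also preserved under base change and composition, so the projections $\pi_A, \pi_B$ from $\bW$ are \'etale and the compositions $\bs h_A, \bs h_B$ with the Zariski open chart $\bs h$ supplied by Theorem \ref{da4thm1} are \'etale. The \'etale analogue of criterion (iv), verified via the same sort of classical lifting argument after a suitable localization of $C_0$, then produces \'etale cdga morphisms $\al : A \ra C$ and $\be : B \ra C$. In both cases, the essential technical content lies in the cdga-lifting step; the fibre-product construction and the application of Theorem \ref{da4thm1} are formal.
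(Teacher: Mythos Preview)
Your fibre-product strategy and the appeal to Theorem \ref{da4thm1} match the paper, and you are right that the cdga-lifting step is the essential content. But your argument for that step has a real gap. Criterion (iv) after Question \ref{da3quest} is a biconditional: it says the lift $\al:A\ra C$ exists \emph{if and only if} one can first produce a degree-zero algebra map $\al^0:A^0\ra C_1^0$ making the classical square commute. You assert such an $\al^0$ exists ``using smoothness of $A^0$ and $C_1^0$ together with the Zariski-open structure,'' but this is exactly where the obstruction lives, and item (i) after Question \ref{da3quest} already warns that the answer to (b) can be no. The smooth algebra $C_0^0$ handed to you by Theorem \ref{da4thm1} is built with no reference to $A^0$; you only know that $\Spec H^0(C_0)$ sits as a closed subscheme in both $\Spec C_0^0$ and (an open of) $\Spec A^0$, and you are asking to extend the second embedding to a morphism from a Zariski open of $\Spec C_0^0$. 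Concretely, suppose $\Spec A^0$ is an affine open in an elliptic curve, $C_0^0=\K[t]$, and $\Spec H^0(A)\cong\Spec H^0(C_0)\cong\Spec\K[\ep]/(\ep^2)$ is a non-reduced point. Every morphism from an open of $\bA^1$ to an open of an elliptic curve is constant, and the constant map cannot make the square commute because it kills the nilpotent direction. So no $\al^0$ exists, and criterion (iv) gives you nothing.

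The paper sidesteps this by refusing to treat Theorem \ref{da4thm1} as a black box. Rather than accept whatever $C_0^0$ that construction produces, one observes that the composite $t_0(\bW)\ra\Spec A(0)\t\Spec B(0)$ is a locally closed immersion on underlying classical schemes, and one \emph{chooses} $C(0)$ to be a smooth locally closed subscheme of $\Spec A(0)\t\Spec B(0)$ of minimal dimension through $r=(p,q)$ containing it. One then reruns the inductive construction of Theorem \ref{da4thm1} over this specific $C(0)$. The two projections now furnish honest $\K$-algebra maps $A(0)\ra C(0)$ and $B(0)\ra C(0)$ by construction, so the hypothesis of criterion (iv) is satisfied for free; equivalently, since $A$ is free as a graded algebra over $A(0)$, the map $A(0)\ra C(0)\hookra C$ extends to a cdga morphism $\al:A\ra C$ by sending the generators where the derived map $\bs c$ dictates. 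The deliberate placement of $C(0)$ inside $\Spec A(0)\t\Spec B(0)$ is the missing idea in your argument.
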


\subsection{Proof of Theorem \ref{da4thm1}}
\label{da41}

The proof is a variation on Lurie \cite[Th.~8.4.3.18]{Luri2}. We
give an outline, referring the reader to \cite{Luri2} for more
details.

As $\bX$ is a locally finitely presented derived $\K$-scheme, it is
covered by affine finitely presented derived $\K$-subschemes. So we
can choose an open neighbourhood $\bY$ of $x$ in $\bX$, a cdga $B$
of finite presentation over $\K$, and an equivalence $\bs g:\bSpec
B\ra\bY\subseteq\bX$. There is then a unique $q\in\Spec H^0(B)$
with~$\bs g(q)=x$. 

Since $B$ is of finite presentation, the cotangent complex $\bL_B$
has finite Tor-amplitude, say in the interval $[-n,0]$. We will show
that a localization $B'$ of $B$ at $q$ is equivalent to a standard
form cdga $A=A(n)$ over $\K$, constructed inductively in a sequence
$A(0),A(1),\ldots,A(n)$ as in Example \ref{da2ex2}, with $A$ minimal
at the point $p\in\Spec H^0(A)$ corresponding to $q\in\Spec
H^0(B')\subseteq\Spec H^0(B)$. For the inductive construction, we do
not fix a particular model for the cdga $B$, but understand it as an
object in the $\iy$-category of cdgas. Similarly, when we assert
the existence of a map $A(k) \ra B$ from some level of the inductive 
construction to $B$, this map should be understood as a map in 
the $\iy$-category of cdgas. 

Let $m_0=\dim \Om^1_{H^0(B)}\vert_q$, the embedding dimension of
$\Spec H^0(B)$ at $q$. Localizing $H^0(B)$ at $q$ if necessary, we
can find a smooth algebra $A(0)$ of dimension $m_0$, an ideal
$I\subset A(0)$, and an isomorphism of algebras $A(0)/I\ra H^0(B)$
such that the induced surjection of modules $\Om^1_{A(0)}\ot_{A(0)}
H^0(B)\ra\Om^1_{H^0(B)}$ is an isomorphism at $q$. Geometrically, we
have chosen an embedding of $\Spec H^0(B)$ into a smooth scheme
$\Spec A(0)$ that is minimal at $q$. Let $p\in\Spec A(0)$ be the
image of~$q\in\Spec H^0(B)$.

Since $B$ is the homotopy limit of its Postnikov tower $\cdots\ra\tau_{\geq -1}B\ra \tau_{\geq 0}B\simeq H^{0}(B)$ in which each map is a square-zero extension of cdgas \cite[Prop.~7.1.3.19]{Luri2}, and since $A(0)$ is smooth and hence maps out of it can be lifted along square-zero extensions, we can lift the surjection $A(0)\ra H^0(B)$ along the canonical map $B\ra H^0(B)$ to obtain a map $\al^0:A(0)\ra B$ 
which is a surjection on $H^0$. Consider the fibre sequence $\fib(\al^0)\ra A(0)\ra B$. One can show that there is an isomorphism $H^0(B)\ot_{A(0)}H^0(\fib(\al^0))\cong
H^{-1}(\bL_{B/A(0)})$. Furthermore, we see that there is a
surjection $H^0(\fib(\al^0))\twoheadrightarrow I$ and hence a
surjection $H^{-1}(\bL_{B/A(0)})=H^0(B)\ot_{A(0)} H^0(\fib(\al^0))
\twoheadrightarrow I/I^2$.

Localizing if necessary and using Nakayama's Lemma, we may choose a
free finite rank $A(0)$-module $M^{-1}$ together with a surjection
$M^{-1}\ra H^{-1}(\bL_{B/A(0)})$ that is an isomorphism at $p$. We
therefore obtain a surjection $M^{-1} \ra I/I^2$ which can be lifted
through the map $I\ra I/I^2$. Localizing again if necessary and
using Nakayama's Lemma, we may assume the lift $M^{-1}\ra I$ is
surjective.

Using this choice of $M^{-1}$ together with the map $M^{-1}\ra I
\subset A(0)$, we define a cdga $A(1)$ as in Example \ref{da2ex2}.
Note that by construction, the induced map $\al^{-1}:A(1)\ra B$
induces an isomorphism $H^0(A(1))\ra H^0(B)$.

Now consider the fibre sequence $B\ot_{A(1)}\bL_{A(1)/A(0)}\ra
\bL_{B/A(0)}\ra\bL_{B/A(1)}$. By construction, $\bL_{A(1)/A(0)}
\simeq A(1)\ot_{A(0)}M^{-1}[1]$, and the map $H^0(B)
\ot_{A(0)}M^{-1}=H^{-1}(\bL_{A(1)/A(0)})\ra H^{-1}(\bL_{B/A(0)})$ is
surjective, so $H^i(\bL_{B/A(1)})$ vanishes for $i=0,-1$. One can
then deduce that there are isomorphisms
\begin{equation*}
H^{-1}(\fib(\al^{-1})) \cong H^{-2}({\rm cof}(\al^{-1}))\cong
H^{-2}(\bL_{B/A(1)}).
\end{equation*}

Localizing if necessary, we can choose a free $A(1)$-module $M^{-2}$
of finite rank together with a surjection $H^0(B) \ot_{H^0(A(1))}
H^0(M^{-2}) \ra H^{-1}(\fib(\al^{-1})) \cong H^{-2}(\bL_{B/A(1)})$
that is an isomorphism at $p$. Choosing a lift $M^{-2}[1] \ra
\fib(\al^{-1})\allowbreak \ra A(1)$, we construct $A(2)$ from $A(1)$
as in Example \ref{da2ex2}.

Continuing in this manner, we construct a sequence of cdgas
\begin{equation*}
A(0) \ra A(1) \ra \cdots \ra A(k-1)
\ra A(k) \ra \cdots \ra A(n-1)
\end{equation*}
so that $\bL_{A(k)/A(k-1)}\simeq A(k) \ot_{A(k-1)} M^{-k}[k]$ is a
free $A(k)$-module and the natural map $H^{-k}(\bL_{A(k)/A(k-1)})
\ra H^{-k}(\bL_{B/A(k-1)})$ is surjective and is an isomorphism at
$p$.

By induction on $k$, we see that $\bL_{A(k)}$ has Tor-amplitude in
$[-k,0]$. Considering the fibre sequence $B\ot_{A(n-1)}\bL_{A(n-1)}
\ra\bL_B\ra\bL_{B/A(n-1)}$ and bearing in mind that $\bL_B$ has
Tor-amplitude in $[-n,0]$ by assumption, we see that
$\bL_{B/A(n-1)}$ has Tor-amplitude in $[-n,0]$. But by \eq{da2eq5},
$H^i(\bL_{B/A(n-1)})=0$ for $i>-n$, so we see that
$\bL_{B/A(n-1)}[-n]$ is a projective $B$-module of finite rank.
Localizing if necessary, we may assume it is free.

Finally, choose a free $A(n-1)$-module $M^{-n}$ with $\bL_{B/A(n-1)}
\simeq B\ot_{A(n-1)}M^{-n}[n]$. Then $H^0(M^{-n})\cong
H^{-n}(\bL_{B/A(n-1)}) \cong H^{1-n}(\fib(\al^{1-n}))$, so using
$M^{-n}[n-1]\ra A(n-1)$ to build $A(n)$, we have a fibre sequence
$\bL_{A(n)/A(n-1)}\ra \bL_{B/A(n-1)}\ra\bL_{B/A(n)}$ in which the
first arrow is an equivalence. Thus $\bL_{B/A(n)}\simeq 0$, and
since the map $\al^{-n}:A(n)\ra B$ induces an isomorphism on $H^0$,
it must be an equivalence. Set $A=A(n)$, so that $A$ is a standard
form cdga over $\K$. As $\bSpec\al^{-n}:\bSpec B\ra\bSpec A$ is an
equivalence in $\dSch_\K$ with $\bSpec\al^{-n}:q\mapsto p$, there
exists a quasi-inverse $\bs h:\bSpec A\ra\bSpec B$ for
$\bSpec\al^{-n}$, with $\bs h(p)=q$. Then $\bs f=\bs g\ci\bs
h:\bSpec A\ra\bX$ is a Zariski open inclusion with $\bs f(p)=\bs
g\ci\bs h(p)=\bs g(q)=x$, as we have to prove.

It remains to show that $A$ is minimal at $p$. For this we must
check that for each $k$, the composition \eq{da2eq8} is zero at $p$.
Using the commutative diagram \eq{da2eq6}, and \eq{da2eq6} with
$k-1$ in place of $k$, we have a commutative diagram
\begin{equation*}
\xymatrix@R=13pt@C=130pt{*+[r]{H^{-k}(\bL_{A(k)/A(k-1)})}
\ar@{=}[r] \ar[d] & *+[l]{H^{-k}(\bL_{A(k)/A(k-1)})} \ar[d] \\
*+[r]{H^{-k}(\bL_{B/A(k-1)})} \ar[r] \ar[d] &
*+[l]{H^{1-k}(\bL_{A(k-1)})} \ar[d] \\
*+[r]{H^{1-k}(\bL_{A(k-1)/A(k-2)})} \ar@{=}[r] &
*+[l]{H^{1-k}(\bL_{A(k-1)/A(k-2)}),\!\!{}}}
\end{equation*}
where the right hand column is \eq{da2eq8}. It is therefore enough
to see that composition in the left hand column is zero at~$p$.

But by construction, the first map in \eq{da2eq8} is an isomorphism
at $p$, so we need the second map to be zero at $p$. But again by
construction, we have an exact sequence $H^{-k}(\bL_{B/A(k-1)})\ra
H^{1-k}(\bL_{A(k-1)/A(k-2)}) \ra H^{1-k}(\bL_{B/A(k-2)})$ in which
the second map is an isomorphism at $p$, and hence
$H^{-k}(\bL_{B/A(k-1)}) \ra H^{1-k}(\bL_{A(k-1)/A(k-2)})$ is indeed
zero at $p$. This completes the proof.

\subsection{Proof of Theorem \ref{da4thm2}}
\label{da42}

Given a derived scheme $\bX$ with a point $x\in\bX$ and Zariski open
neighbourhoods $\bs f:\bU=\bSpec A\ra\bX$, and $\bs g:\bV=\bSpec
B\ra\bX$ of $x$, we may assume that $A$ and $B$ are given as
standard form cdgas. We want to show that we can find a Zariski open
neighbourhood $\bs h:\bW=\bSpec C\ra\bX$ of $x$ contained in $\bU$
and $\bV$, where $C$ is a standard form cdga minimal at $r\in\Spec
H^0(C)$ with $\bs h(r)=x$ and such that in the homotopy commutative
diagram
\e
\begin{gathered}
\xymatrix@!0@C=60pt@R=30pt{ && \bW=\bSpec C \ar[dd]_{\bs h}
\ar[dl]^(0.4){\bs c} \ar[dr]_(0.4){\bs d} \\
& \bU=\bSpec A \ar[dl] \ar[dr]_(0.4){\bs f}
&& \bV=\bSpec B \ar[dl]^(0.4){\bs g} \ar[dr] \\
\Spec A(0) && \bX && \Spec B(0), }
\end{gathered}
\label{da4eq1}
\e
the maps $\bs c$ and $\bs d$ are induced by maps of cdgas $\al:A\ra
C$ and $\be:B\ra C$.

To begin with, choose a Zariski open immersion of an affine derived
scheme $\bW\ra\bU\t_\bX\bV\ra\bX$ whose image contains $x$. From the
above commutative diagram, we have maps $\bW\ra\bSpec A\ra \Spec
A(0)$ and $\bW\ra\bSpec B\ra \Spec B(0)$ in which the first arrows
in each sequence are open immersions and the second arrows are
closed immersions on the underlying classical schemes. The induced
map $\bW\ra\Spec A(0)\t\Spec B(0)$ is therefore a locally closed
immersion on the underlying classical scheme $W=t_0(\bW)$.
Localizing if necessary, we may assume that in fact $\bW \ra \Spec
A(0)\t\Spec B(0)$ factors via a closed immersion $W\hookra\Spec
C(0)$ into a smooth, affine, locally closed $\K$-subscheme $\Spec
C(0)\subseteq\Spec A(0)\t\Spec B(0)$ of minimal dimension
at~$r:=(p,q)$.

Proceeding as in the proof of Theorem \ref{da4thm1}, we may build a
standard form model $C$ for $\bW=\Spec C$ that is free over $C(0)$
and minimal at $x$. Since $A$ is free over $A(0)$ the map
$\bW=\bSpec C\ra\Spec C(0)\ra\Spec A(0)$ factors through $\bs
c:\bSpec C\ra\bSpec A$, and the latter is induced by an actual map
of cdgas $\al:A\ra C$. Similarly, the map $\bs d:\bSpec C\ra\bSpec
B$ is induced by an actual map of cdgas $\be:B\ra C$, since $B$ is
free over $B(0)$. This proves the first part of
Theorem~\ref{da4thm2}.

For the second part, if instead $\bs f:\bU=\bSpec A\ra\bX$ and $\bs
g:\bV=\bSpec B\ra\bX$ are \'etale neighbourhoods of $x$, we apply
the same fibre product construction. Since \'etale maps are stable
under pullbacks, $\bs c$ and $\bs d$ are now \'etale, rather than
Zariski open inclusions. However, the induced map $\bW\ra\Spec
A(0)\t\Spec B(0)$ is still a locally closed immersion on
$W=t_0(\bW)$. So in the same way we obtain a standard form model $C$
for $\bW=\bSpec C$ that is free over $C(0)$ and minimal at $r=(p,q)$
with \'etale maps $\bs c,\bs d$ that are induced by actual maps of
cdgas.

\section{A derived Darboux Theorem}
\label{da5}

Section \ref{da51} explains what is meant by a $k$-shifted
symplectic structure $\om=(\om^0,\om^1,\om^2,\ldots)$ on an affine
derived $\K$-scheme $\bSpec A$ for $A$ of standard form and $k<0$,
expanding on \S\ref{da34}, and \S\ref{da52} proves that up to
equivalence we can take $\om=(\dd\phi,0,0,\ldots)$. Sections
\ref{da53}--\ref{da54} define standard models for $k$-shifted
symplectic structures on $\bSpec A$, which we call `Darboux form'.

Our main result Theorem \ref{da5thm1}, stated in \S\ref{da55} and
proved in \S\ref{da56}, says that every $k$-shifted symplectic
derived $\K$-scheme $(\bX,\ti\om)$ for $k<0$ is locally equivalent
to $(\bSpec A,\om)$ for $A,\om$ in Darboux form. Section \ref{da57} explains how to compare different Darboux
form presentations of $(\bX,\ti\om)$ on their overlaps.

\subsection{\texorpdfstring{$k$-shifted symplectic structures on $\bSpec A$}{k-shifted symplectic structures on Spec A}}
\label{da51}

Let $A=A(n)$ be a standard form cdga over $\K$, as in Example
\ref{da2ex2}, constructed from a smooth $\K$-algebra $A(0)$ and free
finite rank modules $M^{-1},\ab M^{-2},\ab\ldots,\ab M^{-n}$ over
$A(0),\ldots,A(n-1)$. Write $\bX=\bSpec A$ for the corresponding
affine derived $\K$-scheme. We will explain in more detail how the
material of \S\ref{da34} works out for~$\bX=\bSpec A$.

As in \S\ref{da33} there is an equivalence $L_\qcoh(\bX)\simeq \ab
D(\mathop{\rm mod}A)$ which identifies $\bL_\bX\simeq\bL_A$, and as
$A$ is of standard form, as in \S\ref{da23} the K\"ahler
differentials $\bigl(\Om^1_A,\d\bigr)$ are a model for $\bL_A$ in
$D(\mathop{\rm mod}A)$, and Proposition \ref{da2prop} gives an
explicit description of $\bigl(\Om^1_A,\d\bigr)$. Write
$\Om^1_A=\bigop_{k=-\iy}^0(\Om^1_A)^k$ for the decomposition of
$\Om^1_A$ into graded pieces, so that the differential maps
$\d:(\Om^1_A)^k\ra(\Om^1_A)^{k+1}$.

As in \S\ref{da21} and \S\ref{da23}, the {\it de Rham algebra\/} of
$A$ is
\begin{equation*}
\DR(A):=\Sym_A(\Om^1_A[1])=\bigop_{p=0}^\iy\La^p \Om^1_A[p]=
\bigop_{p=0}^\iy\bigop_{k=-\iy}^0(\La^p\Om^1_A)^k[p].
\end{equation*}
It has two gradings, {\it degree\/} and {\it weight}, where the
component $(\La^p\Om^1_A)^k[p]$ has degree $k-p$ and weight $p$. It
has two differentials, the {\it internal differential\/}
$\d:\DR(A)\ra \DR(A)$ of degree 1 and weight 0, so that $\d$ maps
\begin{equation*}
\d:(\La^p\Om^1_A)^k[p]\longra(\La^p\Om^1_A)^{k+1}[p],
\end{equation*}
and the {\it de Rham differential\/} $\dd:\DR(A)\ra \DR(A)$ of
degree $-1$ and weight 1, so that $\dd$ maps
\begin{equation*}
\dd:(\La^p \Om^1_A)^k[p]\longra (\La^{p+1}\Om^1_A)^k[p+1].
\end{equation*}
They satisfy $\d\ci\d=\dd\ci\dd=\d\ci\dd+\dd\ci\d=0$. The
multiplication `$\,\cdot\,$' on $\DR(A)$ maps
\begin{equation*}
(\La^p \Om^1_A)^k[p]\t (\La^p
\Om^1_A)^l[q]\longra (\La^{p+q}\Om^1_A)^{k+l}[p+q].
\end{equation*}

In \cite[Def.~1.7]{PTVV}, Pantev et al.\ define a simplicial set
$\cA^p_\K(\bX,k)$ of $p$-{\it forms of degree\/} $k\in\Z$ on the
derived $\K$-scheme $\bX=\bSpec A$ by
\e
\cA^p_\bX(Y,k)=\bmd{\La^p\bL_A[k]}.
\label{da5eq1}
\e
In our case we may take $\La^p\bL_A=\La^p\Om^1_A$. If $E^\bu$ is a
complex of $\K$-modules, then $\md{E^\bu}$ means: take the
truncation $\tau_{\le 0}(E^\bu)$, and turn it into a simplicial set
via the Dold--Kan correspondence. To avoid dealing with simplicial
sets, note that this implies that $\pi_0\bigl(\md{E^\bu}\bigr)\cong
H^0\bigl(\tau_{\le 0}(E^\bu)\bigr)\cong H^0\bigl(E^\bu\bigr)$. Thus
\eq{da5eq1} yields
\e
\pi_0\bigl(\cA^p_\K(\bX,k)\bigr)\cong H^k\bigl(\La^p\Om^1_A,\d\bigr)
=H^{k-p}\bigl(\La^p\Om^1_A[p],\d\bigr).
\label{da5eq2}
\e

So, (connected components of the simplicial set of) $p$-forms of
degree $k$ on $\bX$ are just $k$-cohomology classes of the complex
$\bigl(\La^p\Om^1_A,\d\bigr)$. We prefer to deal with explicit
representatives, rather than cohomology classes. So we define:

\begin{dfn} 
\label{da5def1}
In the situation above with $\bX=\bSpec A$, a $p$-{\it
form of degree $k$ on\/} $\bX$ for $p\ge 0$ and $k\le 0$ is
$\om^0\in(\La^p\Om^1_A)^k$ with $\d\om^0=0$ in $(\La^p
\Om^1_A)^{k+1}$. Two $p$-forms $\om^0,\ti\om^0$ of degree $k$ are
{\it equivalent}, written $\om^0\sim\ti\om^0$, if there exists
$\al^0\in(\La^p \Om^1_A)^{k-1}$ with $\om^0-\ti\om^0=\d\al^0$.

Then equivalence classes $[\om^0]$ of $p$-forms $\om^0$ of degree
$k$ on $\bX$ in the sense of Definition \ref{da5def1} correspond to
connected components of the simplicial set of $p$-forms of degree
$k$ on $\bX$ in the sense of Pantev et al. \cite{PTVV}. The reason
for including the superscripts $0$ in $\om^0,\al^0$ will become
clear in Definition~\ref{da5def2}.

Letting $\bT_A=\ul{{\rm Hom}}_A(\Om^1_A,A)=\ul{{\rm Der}}(A)$ denote
the {\it tangent complex\/} of $A$, a $2$-form $\om^0$ of degree $k$
on $\bX$ defines an antisymmetric morphism
\e
\om^0:\bT_A\longra \Om^1_A[k],
\label{da5eq3}
\e
via $X\mapsto \io_X\om^0$ for $X \in \ul{{\rm Der}}(A)$. The
$2$-form $\om^0$ is said to be {\it non-degenerate} if this induced
map is a quasi-isomorphism.
\end{dfn}

The definition of the simplicial set $\cA^{p,\cl}_\K(\bX,k)$ of {\it
closed\/ $p$-forms of degree\/} $k\in\Z$ on $\bX=\bSpec A$ in Pantev
et al.\ \cite[Def.~1.7]{PTVV} yields
\e
\cA^{p,\cl}_\K(\bX,k)=\bmd{\ts\prod_{i\ge 0}\La^{p+i\,}\bL_A[k-i]}.
\label{da5eq4}
\e
In our case we may take $\La^{p+i\,}\bL_A[k-i]=
\La^{p+i}\Om^1_A[k-i]$. Then $\prod_{i\ge 0}\La^{p+i\,}\Om^1_A[k-i]$
means the complex which as a graded vector space is the product over
all $i\ge 0$ of the graded vector spaces $\La^{p+i\,}\Om^1_A[k-i]$,
with differential~$\d+\dd$.

The difference between $\prod_{i\ge 0}\La^{p+i}\bL_A[k-i]$ and
$\bigop_{i\ge 0}\La^{p+i}\bL_A[k-i]$ is that elements $(\om^i)_{i\ge
0}\in\bigop_{i\ge 0}\La^{p+i}\bL_A[k-i]$ have $\om^i\ne 0$ for only
finitely many $i$, but elements $(\om^i)_{i\ge 0}\in\prod_{i\ge
0}\La^{p+i}\bL_A[k-i]$ can have $\om^i\ne 0$ for infinitely
many~$i$.

Thus, as for \eq{da5eq1}--\eq{da5eq2}, equation \eq{da5eq4} implies
that
\begin{align*}
\pi_0\bigl(\cA^{p,\cl}_\K(\bX,k)\bigr)&\cong H^0\bigl(
\ts\prod_{i\ge 0}\La^{p+i\,}\Om^1_A[k-i],\d+\dd\bigr)\\
&=H^k\bigl(\ts\prod_{i\ge 0}\La^{p+i}\Om^1_A[-i],\d+\dd\bigr).
\end{align*}

As for Definition \ref{da5def1}, we define:

\begin{dfn} 
\label{da5def2}
In the situation above with $\bX=\bSpec A$, a {\it
closed\/ $p$-form of degree $k$ on\/} $\bX$ for $p\ge 0$ and $k\le
0$ is $\om=(\om^0,\om^1,\om^2,\ldots)$ with $\om^i\in(\La^{p+i}
\Om^1_A)^{k-i}$ for $i=0,1,2,\ldots,$ satisfying the equations
\e
\begin{aligned}
\d\om^0&=0 &&\text{in $(\La^p \Om^1_A)^{k+1}$, and}\\
\dd\om^i+\d\om^{i+1}&=0 &&\text{in $(\La^{p+i+1}\Om^1_A)^{k-i}$, for
all $i\ge 0$.}
\end{aligned}
\label{da5eq5}
\e

We call two closed $p$-forms $\om=(\om^0,\ldots)$,
$\ti\om=(\ti\om^0,\ldots)$ of degree $k$ {\it equivalent}, written
$\om\sim\ti\om$, if there exists $\al=(\al^0,\al^1,\ldots)$ with
$\al^i\in(\La^{p+i}\Om^1_A)^{k-i-1}$ for $i=0,1,\ldots$ satisfying
\begin{align*}
\om^0-\ti\om^0&=\d\al^0 &&\text{in $(\La^p\Om^1_A)^k$, and}\\
\om^{i+1}-\ti\om^{i+1}&=\dd\al^i+\d\al^{i+1} &&\text{in
$(\La^{p+i+1}\Om^1_A)^{k-i-1}$, for all $i\ge 0$.}
\end{align*}

The morphism $\pi:\cA^{p,\cl}_\K(\bX,k)\ra\cA^p_\K(\bX,k)$ from
closed $p$-forms of degree $k$ to $p$-forms of degree $k$ in
\S\ref{da34} corresponds to $\pi:\om=(\om^0,\ldots)\mapsto\om^0$.

A closed 2-form $\om=(\om^0,\om^1,\ldots)$ of degree $k$ on $\bX$ is
called a $k$-{\it shifted symplectic form\/} if $\om^0=\pi(\om)$ is
a nondegenerate 2-form of degree $k$.
\end{dfn}

Then equivalence classes $[\om]$ of closed $p$-forms $\om$ of degree
$k$ on $\bX$ in the sense of Definition \ref{da5def1} correspond to
connected components of the simplicial set $\cA^{p,\cl}_\K(\bX,k)$
of closed $p$-forms of degree $k$ on $\bX$ in the sense of Pantev et
al.~\cite{PTVV}.

\subsection{Closed forms and cyclic homology of mixed complexes}
\label{da52}

In order to work effectively with such symplectic forms, it is very
useful to interpret them in the context of cyclic homology of mixed
complexes. The following definitions are essentially as in Loday
\cite[\S 2.5.13]{Loda}, except that we use cohomological grading and
take into account an extra weight grading in our mixed complexes, as
in Pantev et al.~\cite[\S 1.1]{PTVV}.

\begin{dfn} 
\label{da5def3}
A {\it mixed complex\/} $E$ is a complex over $\K$ with
a differential $b$ of degree $1$ together with an additional
square-zero operator $B$ of degree $-1$ anti-commuting with $b$. A
{\it graded mixed complex} has in addition a {\it weight grading}
giving by a decomposition $E=\bigop_p E(p)$, where $b$ has degree
$0$ and $B$ has degree $1$ with respect to the weight grading. A
morphism between graded mixed complexes $E=\bigop_p E(p)$ and
$F=\bigop_p F(p)$ is a $\K$-linear map $\vp: E \ra F$ of degree zero
with respect to both the cohomological and weight grading that
commutes with $b$ and $B$. The morphism $\vp: E \ra F$ is a {\it
weak equivalence\/} if it is a quasi-isomorphism for the cohomology
taken with respect to the differential $b$. For simplicity and since
it is sufficient for our applications, we shall consider only graded
mixed complexes that are bounded above at $0$ w.r.t.\ the
cohomological grading and bounded below at $0$ w.r.t.\ the weight
grading.
\end{dfn}

\begin{ex} For us, the main example of a graded mixed complex is
$E=\DR(A)=\bigop_p\La^p\Om^1_A[p]$ with $b=\d$ and $B=\dd$, for $A$
a standard form cdga.
\label{da5ex1}
\end{ex}

\begin{dfn} 
\label{da5def4}
Given a graded mixed complex $E$, for each $p$ we
define three complexes, the {\it negative cyclic complex of weight
$p$}, denoted $\NC(E)(p)$, the {\it periodic cyclic complex of
weight $p$}, denoted $\PC(E)(p)$, and the {\it cyclic complex of
weight $p$}, denoted $\CC(E)(p)$. The degree $k$ terms of these
complexes are:
\begin{align*}
\NC^k(E)(p) & = \prod_{i \ge 0} E^{k-2i}(p+i), \\
\PC^k(E)(p) & = \prod_{i} E^{k-2i}(p+i), \\
\CC^k(E)(p) & = \prod_{i \le 0} E^{k-2i}(p+i).
\end{align*}
In each case, the differential is simply $b+B$, and the complexes
can constructed as ${\rm Tot}^{\Pi}$ of appropriate double
complexes. The $k^{\rm th}$ cohomology of the complexes $\NC(E)(p)$,
$\PC(E)(p)$, and $\CC(E)(p)$ are denoted $\HN^k(E)(p)$,
$\HP^k(E)(p)$, and $\HC^k(E)(p)$, respectively.

There is an evident short exact sequence of complexes
\begin{equation*}
0 \longra \NC(E)(p) \longra \PC(E)(p)
\longra \CC(E)(p-1)[2] \longra 0,
\end{equation*}
and an induced long exact sequence of cohomology groups
\e
\begin{gathered}
\xymatrix@C=25pt@R=1pt{ \cdots \ar[r] & \HC^{k+1}(E)(p-1) \ar[r]
& \HN^k(E)(p) \ar[r] & \HP^k(E)(p) \\
{\phantom{\cdots}} \ar[r] & \HC^{k+2}(E)(p-1) \ar[r] &
{}\quad\cdots.\qquad{} }
\end{gathered}
\label{da5eq6}
\e

When $E=\DR(A)$ for $A$ a standard form cdga, we denote the
corresponding cochain groups by $\NC^k(A)(p),\PC^k(A)(p),
\CC^k(A)(p)$, and the cohomology groups by $\HN^k(A)(p),\HP^k(A)(p),
\HC^k(A)(p)$. As in Loday \cite[Ch.~5]{Loda}, these groups are known
to be compatible with other definitions that the reader may have
seen. The connection of all this with the material of \S\ref{da51}
is that closed $p$-forms $\om=(\om^0,\om^1,\om^2,\ldots)$ of degree
$k$ in Definition \ref{da5def2} are cocycles in $\NC^{k-p}(A)(p)$,
and equivalence classes $[\om]$ of closed $p$-forms
$\om=(\om^0,\om^1,\ldots)$ of degree $k$ on $\bX=\bSpec A$ are
elements of $\HN^{k-p}(A)(p)$, by the Hochschild--Kostant--Rosenberg Theorem.
\end{dfn}

Here is a useful vanishing result. (Note that the group
$\HN^{k-2}(A)(2)$ in \eq{da5eq7} classifies closed $2$-forms $\om$
of degree $k$ on $\bX=\bSpec A$ up to equivalence.)

\begin{prop}
\label{da5prop1}
{\bf(a)} Let\/ $A$ be a connective cdga over $\K,$ with $H^{0}(A)$ 
of finite type over $\K$. If\/ $p+k \le 2$, then in the
sequence \eq{da5eq6} the map $\HP^{k-3}(A)(p)\ab \ra \HC^{k-1}(A)(p-1)$
is an injection, hence the map $\HN^{k-3}(A)(p) \ra \HP^{k-3}(A)(p)$ is zero
and the map $\HC^{k-2}(A)(p-1) \ra \HN^{k-3}(A)(p)$ is a surjection.

In particular, for $k<0$ we have a short exact sequence
\e
0 \longra \HP^{k-3}(A)(2) \longra \HC^{k-1}(A)(1) \longra
\HN^{k-2}(A)(2) \longra 0.
\label{da5eq7}
\e

\noindent{\bf(b)} For $k=-1,$ the left hand group $\HP^{-4}(A)(2)$
in \eq{da5eq7} has $\HP^{-4}(A)(2)\cong\mathbin{\rm H}^0_\inf(\Spec
H^0(A)),$ where $\mathbin{\rm H}^*_\inf(X)$ is the algebraic de Rham
cohomology of a\/ $\K$-scheme $X$. Thus, if\/ $\Spec H^0(A)$ is
connected then\/~$\HP^{-4}(A)(2)=\K$.

\smallskip

\noindent{\bf(c)} For $k \le -2,$ we have~$\HP^{k-3}(A)(2)=0$.
\end{prop}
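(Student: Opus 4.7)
The plan is to deduce all three parts from general properties of cyclic and de Rham cohomology for standard form cdgas. For part~(a), the three assertions are equivalent via the long exact sequence \eqref{da5eq6}, so it is enough to prove the vanishing $\HN^k(A)(p)\to\HP^k(A)(p)=0$ under the hypothesis $p+1+k\le 0$. Granted this, the short exact sequence \eqref{da5eq7} is obtained by splicing \eqref{da5eq6}: applied to the pair $(l,p)=(k-3,2)$ the hypothesis becomes $p+1+l=k\le 0$, and to $(l,p)=(k-2,2)$ it becomes $p+1+l=k+1\le 0$, both of which hold for $k<0$; the two maps adjacent to the three middle terms of \eqref{da5eq6} therefore vanish and yield \eqref{da5eq7}.

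The vanishing itself I would prove using the Euler vector field technique of Definition~\ref{da2def2}. Each component $\om^i\in(\La^{p+i}\Om^1_A)^{k+p-i}$ of an $\NC$-cocycle $\om$ is an $L_E$-eigenvector with eigenvalue $k+p-i$, which under the hypothesis is strictly negative (hence invertible) for every $i\ge 0$. Since $\io_E$ has $\PC$-degree $-1$ and preserves the weight $p$, one has $\io_E\om\in\PC^{k-1}(A)(p)$, and using Cartan's identity $L_E=\io_E\dd+\dd\io_E$ of Definition~\ref{da2def2} together with $\d\dd+\dd\d=0$ one checks that $(\d+\dd)(L_E^{-1}\io_E\om)$ equals $\om$ up to a correction term involving the super-commutator $[\d,\io_E]$; iterating this correction (a finite process, since $\La^{p+i}\Om^1_A$ is bounded in cohomological degree) produces a genuine $\PC$-primitive $\al$, proving $[\om]=0$ in $\HP^k(A)(p)$.

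For parts~(b) and~(c), the key tool is $u$-periodicity: multiplication by the cyclic variable $u$ of cohomological degree $2$ and weight $-1$ gives a chain isomorphism $\PC^\bu(A)(p)\cong\PC^{\bu+2}(A)(p-1)$, since on the $i$-th summand it is the tautological identification $E^{k-2i}(p+i)=E^{(k+2)-2(i+1)}((p-1)+(i+1))$. Iterating, $\HP^k(A)(p)\cong\HP^{k+2p}(A)(0)$. Now $(\PC^\bu(A)(0),\d+\dd)$ is the Hodge--de Rham total complex $\bigop_{q\ge 0}\La^q\Om^1_A$ of the cdga $A$; since $A^0$ is smooth and $H^0(A)=A^0/I$, this complex is a standard model for the derived de Rham complex of $\Spec H^0(A)$, whose cohomology, by the classical Feigin--Tsygan/Hartshorne comparison in characteristic zero, coincides with the algebraic infinitesimal cohomology $H^*_\inf(\Spec H^0(A))$.

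Parts~(b) and~(c) then follow: $\HP^{-4}(A)(2)\cong\HP^0(A)(0)\cong H^0_\inf(\Spec H^0(A))$, equal to $\K$ for connected $\Spec H^0(A)$, proving~(b); and $\HP^{k-3}(A)(2)\cong\HP^{k+1}(A)(0)\cong H^{k+1}_\inf(\Spec H^0(A))=0$ for $k\le-2$, since then $k+1\le-1$ and algebraic de Rham cohomology vanishes in negative degrees. The main technical obstacle is the identification $\HP^*(A)(0)\cong H^*_\inf(\Spec H^0(A))$, a Hochschild--Kostant--Rosenberg style statement for a Koszul-type resolution of a possibly singular algebra, which has to be matched carefully to the weight grading of the graded mixed complex in our setup.
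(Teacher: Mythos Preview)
Your derivation of the short exact sequence \eqref{da5eq7} from the general statement of (a), and your reduction of (a) to the vanishing of $\HN^k(A)(p)\to\HP^k(A)(p)$, are both correct. However, your proposed proof of that vanishing is not the paper's and, as sketched, has a real gap.

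The paper does \emph{not} argue directly with the Euler field. It reduces to the case of an ordinary commutative algebra: Goodwillie's theorem gives an isomorphism $\HP^k(A)(p)\cong\HP^k(H^0(A))(p)$, and for $H^0(A)$ one cites Emmanouil's result that $\HP^k(H^0(A))(p)\to\HC^{k+2}(H^0(A))(p-1)$ is injective (together with the identification $\HP^k(H^0(A))(p)\cong H^{2p+k}_{\inf}(H^0(A))$). Functoriality of the long exact sequence \eqref{da5eq6} then transfers this injectivity back to $A$.

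Your Euler argument runs into trouble because $L_E$ does not commute with the internal differential: one has $[L_E,\d]=\d$, so passing $L_E^{-1}$ through $\d+\dd$ is not innocent, and the ``correction'' is not simply a repeated application of $[\d,\io_E]=-\io_Q$. More seriously, after even one application of $\io_Q$ your element acquires a component at $\PC$-index $i=-1$, where the $L_E$-eigenvalue is $k+p+1$; in the boundary case $k+p+1=0$ this vanishes and $L_E^{-1}$ is undefined. That boundary case is exactly what you invoke for the pair $(l,p)=(k-2,2)$ when $k=-1$, which is the case of principal interest. Your stated reason for termination (``$\La^{p+i}\Om^1_A$ is bounded in cohomological degree'') is also not the right one: what is relevant is that $\io_Q$ lowers weight and weights are bounded below by $0$, but even granting termination the invertibility problem remains. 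So this route would need substantial further work; the paper's citation of Goodwillie and Emmanouil avoids all of it.

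For parts (b) and (c), your approach via $u$-periodicity $\HP^k(A)(p)\cong\HP^{k+2p}(A)(0)$ and the identification of $\HP^*(A)(0)$ with algebraic de Rham cohomology of $\Spec H^0(A)$ is essentially the same as the paper's: the paper packages this as Emmanouil's $\HP^k(B)(p)\cong H^{2p+k}_{\inf}(B)$ for $B$ in degree zero combined with Goodwillie's $\HP^*(A)\cong\HP^*(H^0(A))$. You correctly flag this comparison as the substantive input; the paper simply outsources it to those two references rather than reproving it.
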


\begin{proof} When $A$ is a cdga concentrated in degree $0$, the
fact that the map $\HP^{k-3}(A)(p) \ra \HC^{k-1}(A)(p-1)$ is an
injection is essentially Emmanouil \cite[Prop.~2.6]{Emma}, noting
that $\HP^{k-3}(A)(p) \cong \mathbin{\rm H}^{2p+k-3}_\inf(A)$ by
\cite[Th.~2.2]{Emma}. We therefore have a short exact sequence as
claimed. Here $\mathbin{\rm H}^{2p+k-3}_\inf(A)$ is `infinitesimal' or
algebraic de Rham cohomology. In particular, for $k=-1$,
$\HP^{-4}(A)(2)\cong \mathbin{\rm H}^0_\inf(A) \cong \K$ when $\Spec
A$ is connected, and for $k\le -2$, $\HP^{k-3}(A)(2)\cong
\mathbin{\rm H}^{k+1}_\inf(A)=0$.

Now let $A$ be a connective cdga with $H^{0}(A)$ of finite type over $\K$ and consider the natural
map $A\ra H^0(A)$. By Goodwillie \cite[Th.~IV.2.1]{Good}, the
induced map $\HP^{k-3}(A)(p) \ra \HP^{k-3}(H^0(A))(p)$ is an isomorphism, so
that (b),(c) follow from the above. The injectivity of the map
$\HP^{k-3}(A)(p) \ra \HC^{k-1}(A)(p-1)$ follows from the injectivity of
the map $\HP^{k-3}(H^0(A))(p) \ra \HC^{k-1}(H^0(A))(p-1)$ by
functoriality of the long exact sequence \eq{da5eq6}, and this
proves~(a).
\end{proof}

Using Proposition \ref{da5prop1} we show that if
$\om=(\om^0,\om^1,\ldots)$ is a $k$-shifted symplectic structure on
$\bX=\bSpec A$ for $k<0$, then up to equivalence we can take $\om^0$
to be exact and $\om^i=0$ for $i=1,2,\ldots,$ which is a
considerable simplification. Also we parametrize how to write $\om$
this way in its equivalence class.

\begin{prop}
\label{da5prop2}
{\bf(a)} Let\/ $\om=(\om^0,\om^1, \om^2,\ldots)$ be a
closed\/ $2$-form of degree $k<0$ on $\bX=\bSpec A,$ for $A$ a
standard form cdga over $\K$. Then there exist\/ $\Phi\in A^{k+1}$
and\/ $\phi\in(\Om^1_A)^k$ such that\/ $\d\Phi=0$ in\/ $A^{k+2}$
and\/ $\dd \Phi+\d\phi=0$ in $(\Om^1_A)^{k+1}$ and\/ $\om\sim (\dd
\phi,0,0,\ldots),$ in the sense of Definition\/ {\rm\ref{da5def2}}.
\smallskip

\noindent{\bf(b)} In the case $k=-1$ in {\bf(a)} we have $\Phi\in
A^0=A(0),$ so we can consider the restriction\/ $\Phi\vert_{X^\red}$
of\/ $\Phi$ to the reduced\/ $\K$-subscheme $X^\red$ of\/
$X=t_0(\bX)=\Spec H^0(A)$. Then $\Phi\vert_{X^\red}$ is locally
constant on $X^\red,$ and we may choose $(\Phi,\phi)$ in {\bf(a)}
such that\/~$\Phi\vert_{X^\red}=0$.
\smallskip

\noindent{\bf(c)} Suppose $(\Phi,\phi)$ and\/ $(\Phi',\phi')$ are
alternative choices in part\/ {\bf(a)\rm} for fixed\/ $\om,k,\bX,A,$
where if\/ $k=-1$ we suppose $\Phi\vert_{X^\red}=0=
\Phi'\vert_{X^\red}$ as in {\bf(b)}. Then there exist\/ $\Psi\in
A^k$ and\/ $\psi\in(\Om^1_A)^{k-1}$ with\/ $\Phi-\Phi'=\d\Psi$
and\/~$\phi-\phi'=\dd\Psi+\d\psi$.

\end{prop}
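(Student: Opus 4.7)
The plan is to translate all three parts into statements about the short exact sequence \eqref{da5eq7} from Proposition \ref{da5prop1}(a), combined with an explicit chain-level identification of cocycles in $\CC^{k-1}(A)(1)$ and of the connecting map $\HC^{k-1}(A)(1)\twoheadrightarrow \HN^{k-2}(A)(2)$.

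First I unwind the cyclic complex. A degree/weight count identifies a cocycle in $\CC^{k-1}(A)(1)$ with a pair $(\phi,\Phi)\in(\Om^1_A)^k\op A^{k+1}$ satisfying $\d\Phi=0$ and $\d\phi+\dd\Phi=0$, and a coboundary with a pair of the form $(\d\psi+\dd\Psi,\d\Psi)$ for $(\psi,\Psi)\in(\Om^1_A)^{k-1}\op A^k$. The connecting morphism sends such a cocycle to $(\dd\phi,0,0,\ldots)\in\NC^{k-2}(A)(2)$: one lifts $(\phi,\Phi)$ to $\PC^{k-3}(A)(2)$ by extending by zero, applies $b+B=\d+\dd$, and observes that the two cocycle conditions annihilate every resulting slot except the one holding $\dd\phi$. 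Part \textbf{(a)} then follows immediately: given $[\om]\in\HN^{k-2}(A)(2)$, surjectivity in \eqref{da5eq7} produces a cocycle $(\phi,\Phi)$ with the stated properties, and the connecting-map formula gives $\om\sim(\dd\phi,0,0,\ldots)$.

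For \textbf{(b)} with $k=-1$, the condition $\d\Phi=0$ is automatic since $A^1=0$. The crucial step is to show $\d\phi$ vanishes in $\Om^1_{X^\red}$. Writing $\phi=\sum_j a_j\dd x^{-1}_j+\sum_j b_j\dd x^0_j$ with $a_j\in A^0$ and $b_j\in A^{-1}$, the Leibniz rule together with $\d\ci\dd=-\dd\ci\d$ and $\d\vert_{A^0}=0$ yields $\d\phi=-\sum_j a_j\,\dd(\d x^{-1}_j)+\sum_j(\d b_j)\,\dd x^0_j$. The elements $\d x^{-1}_j$ and $\d b_j$ both lie in the ideal $I=\Ker(A^0\twoheadrightarrow H^0(A))$, so the first sum lies in $A^0\cdot\dd I$ and the second in $I\cdot\Om^1_{A^0}$; hence $\d\phi$ projects to zero in $\Om^1_{H^0(A)}$, and \emph{a fortiori} in $\Om^1_{X^\red}$. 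Therefore $\dd(\Phi\vert_{X^\red})=-(\d\phi)\vert_{X^\red}=0$, and since $X^\red$ is reduced of finite type over $\K$ in characteristic zero, constancy on the dense smooth locus forces $\Phi\vert_{X^\red}$ to be locally constant. To achieve $\Phi\vert_{X^\red}=0$, after possibly Zariski-localizing in $A^0$ so the locally constant value extends to some $c\in\K\subset A^0$, replace $\Phi$ by $\Phi-c$ and leave $\phi$ unchanged; this preserves both cocycle conditions and leaves $(\dd\phi,0,\ldots)$ unchanged.

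For \textbf{(c)}, the difference $(\phi-\phi',\Phi-\Phi')$ is a cocycle mapping to $0\in\HN^{k-2}(A)(2)$, so by exactness of \eqref{da5eq7} its class lies in the image of $\HP^{k-3}(A)(2)\hookra\HC^{k-1}(A)(1)$. When $k\le-2$ this image vanishes by Proposition \ref{da5prop1}(c), so $(\phi-\phi',\Phi-\Phi')$ is already a coboundary, directly yielding the desired $(\Psi,\psi)$. When $k=-1$, Proposition \ref{da5prop1}(b) gives $\HP^{-4}(A)(2)\cong\K$, and its image is represented by cocycles of the form $(0,c)$ with $c\in\K$ locally constant; so $(\phi-\phi',\Phi-\Phi')$ is cohomologous to such $(0,c)$. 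But coboundary corrections change $\Phi$ only by elements of $\d(A^{-1})\subseteq I$, which vanish on $X^\red$; combined with $\Phi\vert_{X^\red}=\Phi'\vert_{X^\red}=0$, this forces $c\vert_{X^\red}=0$ and hence $c=0$, so $(\phi-\phi',\Phi-\Phi')$ is an honest coboundary. The main obstacle is the Leibniz computation of $\d\phi$ in (b) and the subsequent verification that it lies in $I\cdot\Om^1_{A^0}+A^0\cdot\dd I$, together with the mild localization needed to lift the locally constant $\Phi\vert_{X^\red}$ to an element of $A^0$; everything else is cohomological bookkeeping.
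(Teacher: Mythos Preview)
Your approach is essentially the paper's: translate everything into the short exact sequence \eqref{da5eq7}, identify $\CC^{k-1}(A)(1)$-cocycles as pairs $(\phi,\Phi)$, and compute that the connecting map sends them to $(\dd\phi,0,0,\ldots)$. Part (a) and the $k\le -2$ case of (c) are correct and match the paper exactly; your explicit Leibniz computation in (b) showing $\d\phi$ dies in $\Om^1_{H^0(A)}$ is a more hands-on version of the paper's one-line observation that $[\dd\Phi]=0$ in $H^0(\Om^1_A)$.

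There is, however, a genuine gap in how you finish (b) and (c) when $k=-1$ and $X^\red$ is disconnected. In (b) you write ``after possibly Zariski-localizing in $A^0$ so the locally constant value extends to some $c\in\K$'', but the proposition is about a fixed $A$: localizing is not permitted, and in general there is no single constant $c\in\K\subset A^0$ to subtract. In (c) you quote Proposition~\ref{da5prop1}(b) as giving $\HP^{-4}(A)(2)\cong\K$, but that holds only when $\Spec H^0(A)$ is connected; in general it is $H^0_\inf(\Spec H^0(A))$. Your subsequent claim that the image of $\HP^{-4}(A)(2)$ in $\HC^{-2}(A)(1)$ is represented by cocycles of the form $(0,c)$ with $c\in A^0$ satisfying $\dd c=0$ then fails: such $c$ are locally constant on $\Spec A(0)$, which may well be connected while $X^\red$ has many components, so this subspace can be strictly smaller than the image of $\HP$.

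The paper fixes both points simultaneously by using the full identification $\HP^{-4}(A)(2)\cong H^0_\inf(X)\cong\{\text{locally constant maps }X^\red\to\bA^1\}$ and observing that the assignment $[\Phi,\phi]\mapsto\Phi\vert_{X^\red}$ is well-defined on $\HC^{-2}(A)(1)$ (coboundaries change $\Phi$ by $\d\Psi\in I$, which vanishes on $X^\red$) and splits the sequence \eqref{da5eq7}. Then in (b) one adjusts the \emph{class} $[\Phi,\phi]$ by the unique element of $\HP^{-4}(A)(2)$ corresponding to the locally constant function $\Phi\vert_{X^\red}$, and any cocycle representative of the adjusted class has $\Phi\vert_{X^\red}=0$; in (c) the difference class lies in the image of $\HP$ and maps to $0$ under the splitting, so by injectivity of $\HP\hookrightarrow\HC$ the class is zero. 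Your argument is easily repaired along these lines, but as written it only covers the connected case.
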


\begin{proof} As in Definition \ref{da5def4}, the
$\sim$-equivalence class $[\om]$ of $\om$ lies in $\HN^{k-2}(A)(2)$.
Thus by equation \eq{da5eq7} in Proposition \ref{da5prop1}, $[\om]$
lies in the image of the map $\HC^{k-1}(A)(1) \ra \HN^{k-2}(A)(2)$.
A class in $\HC^{k-1}(A)(1)$ is represented by $(\Phi,\phi)\!\in\!
\CC^{k-1}(A)(1)\!=\!A^{k+1}\!\t\!(\Om^1_A)^k$ with $\d
\Phi\!=\!0\!=\!\dd \Phi\!+\!\d\phi$, and the map
$\CC^{k-1}(A)(1)\ra\NC^{k-2}(A)(2)$ takes
$(\Phi,\phi)\mapsto(\dd\phi,0,0,\ldots)$. Note that
\begin{equation*}
\d(\dd\phi)=-\dd\ci\d\phi=-\dd\ci\dd \Phi-\dd\ci\d\phi=-\dd\bigl(\dd
\Phi+\d\phi\bigr)=0,
\end{equation*}
so that $(\dd\phi,0,0,\ldots)$ is a closed 2-form of degree $k$ on
$\bX$. This proves~(a).

For (b), $\Phi\vert_X:X\ra\bA^1$ is a regular function on $X$ when
$k=-1$ . We have $H^0(T^*X)\cong H^0(\Om^1_A)$. As
$\dd\Phi+\d\phi=0$ we have $[\dd\Phi]=0$ in $H^0(\Om^1_A)$, so
$\dd\bigl(\Phi\vert_X\bigr)=0$ in $H^0(T^*X)$. Therefore
$\Phi\vert_{X^\red}:X^\red\ra\bA^1$ is locally constant.

This is related to the isomorphism $\HP^{-4}(A)(2)\cong\mathbin{\rm
H}^0_\inf(\Spec H^0(A))$ in Proposition \ref{da5prop1}(b). There is
a natural isomorphism from $\mathbin{\rm H}^0_\inf(\Spec H^0(A))$ to
the $\K$-vector space of locally constant maps
$\Phi\vert_{X^\red}:X^\red\ra\bA^1$. When we lift
$[\om]\in\HN^{-3}(A)(2)$ to $[\Phi,\phi]\in\HC^{-2}(A)(1)$ in
\eq{da5eq7} for $k=-1$, the possible choice in the lift
$[\Phi,\phi]$ is $\HP^{-4}(A)(2)\cong\mathbin{\rm H}^0_\inf(\Spec
H^0(A))$, which corresponds exactly to the space of locally constant
maps~$\Phi\vert_{X^\red}:X^\red\ra\bA^1$.

So, by adjusting the choice of lift $[\Phi,\phi]$ of $[\om]$ to
$\HC^{-2}(A)(1)$, we can take $\Phi\vert_{X^\red}=0$, proving (b).
Note that this determines the class $[\Phi,\phi]$ in
$\HC^{-2}(A)(1)$ lifting $[\om]$ uniquely. That is, we have
constructed a canonical splitting of the exact sequence \eq{da5eq7}
when~$k=-1$.

For (c), note that for all $k<0$ the class
$[\Phi,\phi]\in\HC^{k-1}(A)(1)$ in (a) lifting
$[\om]\in\HN^{k-2}(A)(2)$ is uniquely determined, requiring
$\Phi\vert_{X^\red}=0$ when $k=-1$ as above, and since
$\HP^{k-3}(A)(2)=0$ in \eq{da5eq7} for $k\le -2$ by Proposition
\ref{da5prop1}(c). Hence, if $(\Phi,\phi)$ and $(\Phi',\phi')$ are
alternative choices in (a), then $[\Phi,\phi]=[\Phi',\phi']\in
\HC^{k-1}(A)(1)$. Thus, there exists $(\Psi,\psi)\in\CC^{k-2}(A)(1)$
with $\d(\Psi,\psi)=(\Phi,\phi)-(\Phi',\phi')$. From the
definitions, this means that $\Psi\in A^k$ and
$\psi\in(\Om^1_A)^{k-1}$ with $\Phi-\Phi'=\d\Psi$
and~$\phi-\phi'=\dd\Psi+\d\psi$.
\end{proof}

When we apply Proposition \ref{da5prop2} in \S\ref{da54}--\S\ref{da57}, we will do it with $k\om$ in place of $\om$, yielding $\Phi,\phi$ with 
$\d\Phi=0$, $\dd \Phi+\d\phi=0$, and $k\om\sim(\dd\phi,0,0,\ldots)$. This will give simpler formulae, eliminating factors of $k,1/k$.

\subsection{\texorpdfstring{`Darboux forms' for $k$-shifted symplectic structures}{\textquoteleft Darboux forms\textquoteright\ for k-shifted symplectic structures}}
\label{da53}

The next four examples give standard models for $k$-shifted
symplectic affine derived $\K$-schemes for $k<0$, which we will call
`in Darboux form'. Theorem \ref{da5thm1} will prove that every
$k$-shifted symplectic derived scheme $(\bX,\om)$ is Zariski/\'etale
locally equivalent to one in Darboux form. We divide into three
cases:
\begin{itemize}
\setlength{\itemsep}{0pt}
\setlength{\parsep}{0pt}
\item[(a)] $k$ is odd, so that $k=-2d-1$ for $d=0,1,2,\ldots;$
\item[(b)] $k\equiv 0\mod 4$, so that $k=-4d$ for $d=1,2,\ldots;$
and
\item[(c)] $k\equiv 2\mod 4$, so that $k=-4d-2$ for $d=0,1,\ldots.$
\end{itemize}

The difference is in the behaviour of 2-forms in the variables of
`middle degree' $k/2$. In case (a) $k/2\notin\Z$, so there is no
middle degree, and this is the simplest case, which we handle in
Example \ref{da5ex2}. In (b) $k/2$ is even, so 2-forms in the middle
degree variables are antisymmetric. We discuss this in Example
\ref{da5ex3}. In (c) $k/2$ is odd, so 2-forms in the middle degree
variables are symmetric. For this case we give both a `strong
Darboux form' in Example \ref{da5ex4}, to which $k$-shifted
symplectic derived $\K$-schemes are equivalent \'etale locally, and
a `weak Darboux form' in Example \ref{da5ex5}, to which they are
equivalent Zariski locally.

For $A$ as in Example \ref{da5ex2} below, as in Bouaziz and Grojnowski \cite{BoGr} we can regard $\bX=\bSpec A$ as a {\it twisted shifted cotangent bundle\/} $T^*_\si[k]\bY$ for $\bY=\bSpec B$, where $B\subset A$ is the sub-cdga generated by the variables $x_j^{-i}$. Then in \eq{da5eq8} the $x_j^{-i}$ are coordinates on the base $\bY$, and $y_j^{k-i}$ are the dual coordinates on the fibres of~$T^*_\si[k]\bY\ra\bY$.

\begin{ex} 
\label{da5ex2}
Fix $d=0,1,\ldots.$ We will explain how to define a class
of explicit standard form cdgas $(A,\d)=A(n)$ for $n=2d+1$ with a
very simple, explicit $k$-shifted symplectic form
$\om=(\om^0,0,0,\ldots)$ for $k=-2d-1$.

First choose a smooth $\K$-algebra $A(0)$ of dimension $m_0$.
Localizing $A(0)$ if necessary, we may assume that there exist
$x^0_1,\ldots,x^0_{m_0}\in A(0)$ such that $\dd x^0_1,\ldots,\dd
x^0_{m_0}$ form a basis of $\Om^1_{A(0)}$ over $A(0)$.
Geometrically, $\Spec A(0)$ is a smooth $\K$-scheme of dimension
$m_0$, and $(x^0_1,\ldots,x^0_{m_0}):\Spec A(0)\ra\bA^{m_0}$ are
global \'etale coordinates on~$\Spec A(0)$.

Next, choose $m_1,\ldots,m_d\in\N=\{0,1,\ldots\}$. Define $A$ as a
commutative graded algebra to be the free algebra over $A(0)$
generated by variables
\e
\begin{aligned}
&x_1^{-i},\ldots,x^{-i}_{m_i} &&\text{in degree $-i$ for
$i=1,\ldots,d$, and} \\
&y_1^{i-2d-1},\ldots,y^{i-2d-1}_{m_i} &&\text{in degree $i-2d-1$ for
$i=0,1,\ldots,d$.}
\end{aligned}
\label{da5eq8}
\e
So the upper index $i$ in $x^i_j,y^i_j$ always indicates the degree.
We will define the differential $\d$ in the cdga $(A,\d)$ later.

As in \S\ref{da21} and \S\ref{da23}, the spaces $(\La^p\Om^1_A)^k$
and the de Rham differential $\dd$ upon them depend only on the
commutative graded algebra $A$, not on the (not yet defined)
differential $\d$. Note that $\Om^1_A$ is the free $A$-module with
basis $\dd x^{-i}_j,\dd y^{i-2d-1}_j$ for $i=0,\ldots,d$ and
$j=1,\ldots,m_i$. Define an element
\e
\om^0=\sum_{i=0}^d\sum_{j=1}^{m_i}\dd x^{-i}_j\,\dd y^{i-2d-1}_j
\qquad \text{in $(\La^2\Om^1_A)^{-2d-1}$.}
\label{da5eq9}
\e
Clearly $\dd\om^0=0$ in $(\La^3\Om^1_A)^{-2d-1}$.

Now choose a superpotential $\Phi$ in $A^{-2d}$, which we will call the
{\it Hamiltonian}, and which we require to satisfy the {\it
classical master equation\/}
\e
\sum_{i=1}^d\sum_{j=1}^{m_i}\frac{\pd \Phi}{\pd x^{-i}_j}\,
\frac{\pd \Phi}{\pd y^{i-2d-1}_j}=0\qquad\text{in $A^{1-2d}$.}
\label{da5eq10}
\e
Note that \eq{da5eq10} is trivial when $d=0$, so that
$k=-1$, as $A^1=0$. We have
\e
\dd \Phi=\sum_{i=0}^d\sum_{j=1}^{m_i}\frac{\pd \Phi}{\pd x^{-i}_j}\,\dd
x^{-i}_j +\sum_{i=1}^d\sum_{j=1}^{m_i}\frac{\pd \Phi}{\pd
y^{i-2d-1}_j}\,\dd y^{i-2d-1}_j,
\label{da5eq11}
\e
with $\frac{\pd \Phi}{\pd x^{-i}_j}\in A^{i-2d}$ and $\frac{\pd \Phi}{\pd
y^{i-2d-1}_j}\in A^{1-i}$. Note that when $i=0$, $\frac{\pd \Phi}{\pd
y^{-2d-1}_j}=0$ for degree reasons. Define the differential $\d$ on
$A$ by $\d=0$ on $A(0)$, and
\e
\d x^{-i}_j =\frac{\pd \Phi}{\pd y^{i-2d-1}_j}, \quad \d
y^{i-2d-1}_j=\frac{\pd \Phi}{\pd x^{-i}_j},\quad\begin{subarray}{l}\ts
i=0,\ldots,d,\\[6pt] \ts j=1,\ldots,m_i.\end{subarray}
\label{da5eq12}
\e
When $i=0$ this gives $\d x^0_j=\frac{\pd \Phi}{\pd y^{-2d-1}_j}=0$,
consistent with $\d=0$ on $A(0)$. To show that $\d\ci\d=0$, note
that
\e
\begin{split}
\d&\ci\d x^{-i'}_{j'}=\d\biggl[\frac{\pd \Phi}{\pd y^{i'-2d-1}_{j'}}\biggr]\\
&=\sum_{i=0}^d\sum_{j=1}^{m_i}\biggl[\d y^{i-2d-1}_j\cdot
\frac{\pd^2 \Phi}{\pd y^{i-2d-1}_j\pd y^{i'-2d-1}_{j'}}+\d
x^{-i}_j\cdot \frac{\pd^2 \Phi}{\pd x^{-i}_j\pd y^{i'-2d-1}_{j'}}\biggr]\\
&=\sum_{i=0}^d\sum_{j=1}^{m_i}\biggl[\frac{\pd \Phi}{\pd x^{-i}_j}\cdot
\frac{\pd^2 \Phi}{\pd y^{i-2d-1}_j\pd y^{i'-2d-1}_{j'}}+ \frac{\pd
\Phi}{\pd y^{i-2d-1}_j}\cdot \frac{\pd^2 \Phi}{\pd x^{-i}_j\pd
y^{i'-2d-1}_{j'}}\biggr]\\
&=(-1)^{i'-2d-1}\,\frac{\pd}{\pd y^{i'-2d-1}_{j'}}\biggl[
\sum_{i=0}^d\sum_{j=1}^{m_i}\frac{\pd \Phi}{\pd x^{-i}_j}\cdot
\frac{\pd \Phi}{\pd y^{i-2d-1}_j}\biggr]=0,
\end{split}
\label{da5eq13}
\e
using \eq{da5eq12} in the first and third steps and \eq{da5eq10} in
the last. In the same way we show that~$\d\ci\d y^{i'-2d-1}_{j'}=0$.

Observe that $(A,\d)$ is a standard form cdga $A=A(n)$ as in Example
\ref{da2ex2} for $n=2d+1$, defined inductively using free modules
$M^{-i}=\langle x^{-i}_1,\ldots,x^{-i}_{m_i} \rangle_{A(i-1)}$ for
$i=1,\ldots,d$ and $M^{i-2d-1}=\langle y^{i-2d-1}_1,
\ldots,y^{i-2d-1}_{m_i}\rangle_{A(i-2d-2)}$ for $i=0,\ldots,d$.

We claim that $\om:=(\om^0,0,0,\ldots)$ is a $k$-shifted symplectic
structure on $\bX=\bSpec A$ for $k=-2d-1$. To show that $\om$ is a
$k$-shifted closed 2-form, since $\dd\om^0=0$, by \eq{da5eq5} we
have to show that $\d\om^0=0$. This follows from
\ea
\d\om^0&=\sum_{i=0}^d\sum_{j=1}^{m_i}\Bigl[\bigl(\d\ci\dd
y^{i-2d-1}_j \bigr)\,\dd x^{-i}_j+ \bigl(\d\ci\dd
x^{-i}_j\bigr)\,\dd y^{i-2d-1}_j\Bigr]
\nonumber\\
&=-\sum_{i=0}^d\sum_{j=1}^{m_i}\Bigl[\bigl(\dd\ci\d y^{i-2d-1}_j
\bigr)\,\dd x^{-i}_j+\bigl(\dd\ci\d x^{-i}_j\bigr)\,\dd
y^{i-2d-1}_j\Bigr]
\nonumber\\
&=-\sum_{i=0}^d\sum_{j=1}^{m_i}\biggl[\dd\biggl[ \frac{\pd \Phi}{\pd
x^{-i}_j}\biggr]\,\dd x^{-i}_j+\dd\biggl[ \frac{\pd \Phi}{\pd
y^{i-2d-1}_j}\biggr]\,\dd y^{i-2d-1}_j\biggr]
\nonumber\\
&=-\dd\biggl[\sum_{i=0}^d\sum_{j=1}^{m_i} \frac{\pd \Phi}{\pd
x^{-i}_j}\,\dd x^{-i}_j+ \frac{\pd \Phi}{\pd y^{i-2d-1}_j}\,\dd
y^{i-2d-1}_j\biggr]
\nonumber\\
&=-\dd\ci\dd \Phi=0,
\label{da5eq14}
\ea
using \eq{da5eq9} in the first step, $\d\ci\dd=-\dd\ci\d$ in the
second, \eq{da5eq12} in the third, $\dd\ci\dd=0$ in the fourth and
sixth, and \eq{da5eq11} in the fifth.

To show $\om^0$ is nondegenerate, we must check \eq{da5eq3} is a
quasi-isomorphism. It is enough to show that $\om^0\ot\id_{A(0)}:
\bT_A\ot_AA(0)\ra\Om^1_A[k]\ot_AA(0)$ is one. Using Proposition
\ref{da2prop}, we see that $\om^0\ot\id_{A(0)}$ may be written
\e
\begin{gathered}
\xymatrix@R=11pt@C=130pt{ *+[r]{\ts\bigl\langle\frac{\pd}{\pd
x^0_1},\ldots, \frac{\pd}{\pd x^0_{m_0}}\bigl\rangle_{A(0)}}
\ar@<3ex>[d]^{\d} \ar[r]_(0.4){\om^0} &
*+[l]{\ts\bigl\langle\dd y^{-2d-1}_1,\ldots,
\dd y^{-2d-1}_{m_0}\bigr\rangle_{A(0)}} \ar@<-6ex>[d]_{\d} \\
{{}\hskip 6ex\vdots} \ar@<3ex>[d]^\d & {\vdots\hskip 12ex{}}
\ar@<-6ex>[d]_\d \\
*+[r]{\ts\bigl\langle\frac{\pd}{\pd x^{-d}_1},\ldots,
\frac{\pd}{\pd x^{-d}_{m_d}}\bigl\rangle_{A(0)}} \ar@<3ex>[d]^{\d}
\ar[r]_(0.44){\om^0} &
*+[l]{\ts\bigl\langle\dd y^{-d-1}_1,\ldots,
\dd y^{-d-1}_{m_d}\bigr\rangle_{A(0)}} \ar@<-6ex>[d]_{\d} \\
*+[r]{\ts\bigl\langle\frac{\pd}{\pd y^{-d-1}_1},\ldots,
\frac{\pd}{\pd y^{-d-1}_{m_d}}\bigl\rangle_{A(0)}} \ar@<3ex>[d]^{\d}
\ar[r]^(0.46){\om^0} &
*+[l]{\ts\bigl\langle\dd x^{-d}_1,\ldots,
\dd x^{-d}_{m_d}\bigr\rangle_{A(0)}} \ar@<-6ex>[d]_{\d} \\
{{}\hskip 6ex\vdots} \ar@<3ex>[d]^\d & {\vdots\hskip 12ex{}}
\ar@<-6ex>[d]_\d \\
*+[r]{\ts\bigl\langle\frac{\pd}{\pd y^{-2d-1}_1},\ldots,
\frac{\pd}{\pd y^{-2d-1}_{m_0}}\bigl\rangle_{A(0)}}
\ar[r]^(0.5){\om^0} &
*+[l]{\ts\bigl\langle\dd x^0_1,\ldots,
\dd x^0_{m_d}\bigr\rangle_{A(0)}.\!\!{}}
}\!\!\!\!\!\!\!\!\!\!\!\!\!\!\!\!{}
\end{gathered}
\label{da5eq15}
\e
But by \eq{da5eq9}, the rows of \eq{da5eq15} are isomorphisms, so
$\om^0\ot\id_{A(0)}$ is an isomorphism of complexes, and thus a
quasi-isomorphism. Hence $\om=(\om^0,0,0,\ldots)$ is a $k$-shifted
symplectic structure on $\bX=\bSpec A$ for $k=-2d-1$.

Finally, define $\phi\in(\Om^1_A)^{-2d-1}$ by
\e
\phi=-\sum_{i=0}^d\sum_{j=1}^{m_i}\bigl[i\,x^{-i}_j\,\dd y^{i-2d-1}_j+(2d+1-i)y^{i-2d-1}_j\,\dd x^{-i}_j\bigr].
\label{da5eq16}
\e
Then calculating using \eq{da5eq9}--\eq{da5eq12} shows that $\d
\Phi=0$ in $A^{1-2d}$, and $\dd\Phi+\d\phi=0$ in $(\Om^1_A)^{-2d}$,
and $k\om^0=\dd\phi$ in $(\La^2\Om^1_A)^{-2d-1}$. Here to prove
$\dd\Phi+\d\phi=0$ we use that $\frac{\pd \Phi}{\pd x^{-i}_j}$ lies in
$A^{i-2d}$ so that
\begin{equation*}
(i\!-\!2d)\frac{\pd \Phi}{\pd x^{-i}_j}=\!\sum_{i'=0}^d\sum_{j'=1}^{m_{i'}}
\biggl[-i'x^{-i'}_{j'}
\frac{\pd^2 \Phi}{\pd x^{-i'}_{j'}\pd x^{-i}_j}+
(i'\!-\!2d\!-\!1)y^{i'-2d-1}_{j'}
\frac{\pd^2 \Phi}{\pd y^{i'-2d-1}_{j'}\pd x^{-i}_j}\biggr],
\end{equation*}
and a similar equation for $\frac{\pd \Phi}{\pd y^{i-2d-1}_j}$. Thus
$(\Phi,\phi)$ satisfy Proposition \ref{da5prop2}(a) with $k\om$ in place of~$\om$.
\end{ex}

\begin{ex} 
\label{da5ex3}
Fix $d=1,2,\ldots.$ We will define a class of explicit
standard form cdgas $(A,\d)=A(n)$ for $n=4d$ with an explicit
$k$-shifted symplectic form $\om=(\om^0,0,0,\ldots)$ for~$k=-4d$.

As in Example \ref{da5ex2}, choose a smooth $\K$-algebra $A(0)$ of
dimension $m_0$, and $x^0_1,\ldots,x^0_{m_0}\in A(0)$ with $\dd
x^0_1,\ldots,\dd x^0_{m_0}$ a basis of $\Om^1_{A(0)}$ over $A(0)$.
Choose $m_1,\ldots,m_{2d}\in\N$, and define $A$ as a commutative
graded algebra to be the free algebra over $A(0)$ generated by
variables, as for \eq{da5eq8}
\e
\begin{aligned}
&x_1^{-i},\ldots,x^{-i}_{m_i} &&\text{in degree $-i$ for
$i=1,\ldots,2d-1$,} \\
&x_1^{-2d},\ldots,x^{-2d}_{m_{2d}},y_1^{-2d},\ldots,y^{-2d}_{m_{2d}}
&&\text{in degree $-2d$, and} \\
&y_1^{i-4d},\ldots,y^{i-4d}_{m_i} &&\text{in degree $i-4d$ for
$i=0,1,\ldots,2d-1$.}
\end{aligned}\!\!\!\!\!\!\!\!\!\!\!\!\!\!\!\!{}
\label{da5eq17}
\e
Note that there are $2m_{2d}$ rather than $m_{2d}$ generating
variables in degree $-2d$.

As for \eq{da5eq9}, define $\om^0\in(\La^2\Om^1_A)^{-4d}$ with
$\dd\om^0=0$ in $(\La^3\Om^1_A)^{-4d}$ by
\e
\om^0=\sum_{i=0}^{2d}\sum_{j=1}^{m_i}\dd x^{-i}_j\,\dd y^{i-4d}_j
\qquad \text{in $(\La^2\Om^1_A)^{-4d}$.}
\label{da5eq18}
\e
Choose a Hamiltonian $\Phi$ in $A^{1-4d}$, which we require to satisfy
the analogue of \eq{da5eq10}. The analogue of \eq{da5eq11} holds. As
for \eq{da5eq12}, define the differential $\d$ on $A$ by $\d=0$ on
$A(0)$, and
\e
\d x^{-i}_j =(-1)^{i+1}\frac{\pd \Phi}{\pd y^{i-4d}_j}, \quad \d
y^{i-4d}_j=\frac{\pd \Phi}{\pd x^{-i}_j},\quad\begin{subarray}{l}\ts
i=0,\ldots,2d,\\[6pt] \ts j=1,\ldots,m_i.\end{subarray}
\label{da5eq19}
\e
We prove that $\d\ci\d=0$ as in \eq{da5eq13}. The analogue of
\eq{da5eq14} holds with signs inserted, and the analogue of
\eq{da5eq15} has rows isomorphisms. Hence $\om:=(\om^0,0,0,\ldots)$
is a $k$-shifted symplectic structure on $\bX=\bSpec A$ for~$k=-4d$.

Finally, defining $\phi\in(\Om^1_A)^{-4d}$ by the analogue of \eq{da5eq16}, we find that $(\Phi,\phi)$ satisfy Proposition \ref{da5prop2}(a) with $k\om$ in place of~$\om$.
\end{ex}

\begin{ex} 
\label{da5ex4}
Fix $d=0,1,2,\ldots.$ We will define a class of explicit
standard form cdgas $(A,\d)=A(n)$ for $n=4d+2$ with an explicit
$k$-shifted symplectic form $\om=(\om^0,0,0,\ldots)$ for~$k=-4d-2$.

As in Examples \ref{da5ex2} and \ref{da5ex3}, choose a smooth
$\K$-algebra $A(0)$ of dimension $m_0$, and
$x^0_1,\ldots,x^0_{m_0}\in A(0)$ with $\dd x^0_1,\ldots,\dd
x^0_{m_0}$ a basis of $\Om^1_{A(0)}$ over $A(0)$. Choose
$m_1,\ldots,m_{2d+1}\in\N$, and define $A$ as a commutative graded
algebra to be the free algebra over $A(0)$ generated by variables,
as for \eq{da5eq8} and \eq{da5eq17}
\e
\begin{aligned}
&x_1^{-i},\ldots,x^{-i}_{m_i} &&\text{in degree $-i$ for
$i=1,\ldots,2d$,} \\
&z_1^{-2d-1},\ldots,z^{-2d-1}_{m_{2d+1}}
&&\text{in degree $-2d-1$, and} \\
&y_1^{i-4d-2},\ldots,y^{i-4d-2}_{m_i} &&\text{in degree $i-4d-2$ for
$i=0,1,\ldots,2d$.}
\end{aligned}\!\!\!\!\!\!\!\!\!\!\!\!\!\!\!\!{}
\label{da5eq20}
\e

As for \eq{da5eq9} and \eq{da5eq18}, define
$\om^0\in(\La^2\Om^1_A)^{-4d-2}$ with $\dd\om^0=0$ by
\e
\om^0=\sum_{i=0}^{2d}\sum_{j=1}^{m_i}\dd x^{-i}_j\,\dd y^{i-4d-2}_j
+\sum_{j=1}^{m_{2d+1}}\dd z^{-2d-1}_j\,\dd z^{-2d-1}_j.
\label{da5eq21}
\e
Choose a Hamiltonian $\Phi$ in $A^{-4d-1}$, which we require to satisfy
the {\it classical master equation\/}
\e
\sum_{i=1}^{2d}\sum_{j=1}^{m_i}\frac{\pd \Phi}{\pd x^{-i}_j}\,
\frac{\pd \Phi}{\pd y^{i-4d-2}_j}+\frac{1}{4}\sum_{j=1}^{m_{2d+1}}
\biggl(\frac{\pd \Phi}{\pd z^{-2d-1}_j}\biggr)^2=0\qquad\text{in
$A^{-4d}$.}
\label{da5eq22}
\e
The analogue of \eq{da5eq11} holds, with extra terms $\frac{\pd
\Phi}{\pd z^{-2d-1}_j}\,\dd z^{-2d-1}_j$. As for \eq{da5eq12} and
\eq{da5eq19}, define the differential $\d$ on $A$ by $\d=0$ on
$A(0)$, and
\e
\begin{gathered}
\d x^{-i}_j =(-1)^{i+1}\frac{\pd \Phi}{\pd y^{i-4d-2}_j}, \quad \d
y^{i-4d-2}_j=\frac{\pd \Phi}{\pd x^{-i}_j},\quad\begin{subarray}{l}\ts
i=0,\ldots,2d,\\[6pt] \ts j=1,\ldots,m_i,\end{subarray} \\
\text{and}\qquad\d z^{-2d-1}_j=\frac{1}{2}\,\frac{\pd \Phi}{\pd
z^{-2d-1}_j}, \qquad j=1,\ldots,m_{2d+1}.
\end{gathered}
\label{da5eq23}
\e

The analogue of \eq{da5eq13} shows that $\d\ci\d=0$. The analogue of
\eq{da5eq14} holds with extra signs and terms from the
$z^{-2d-1}_j$, and the analogue of \eq{da5eq15} has rows
isomorphisms. Thus $\om:=(\om^0,0,0,\ldots)$ is a $k$-shifted
symplectic structure on $\bX=\bSpec A$ for $k=-4d-2$. Defining
\ea
\phi&=-\sum_{i=0}^{2d}\sum_{j=1}^{m_i}\bigl[i\,x^{-i}_j\,\dd y^{i-4d-2}_j+(-1)^{i+1}(4d+2-i)y^{i-4d-2}_j\,\dd x^{-i}_j\bigr]
\nonumber\\
&\qquad-(4d+2)\sum_{j=1}^{m_{2d+1}}z^{-2d-1}_j\,\dd z^{-2d-1}_j
\qquad\text{in $(\Om^1_A)^{-4d-2}$,}
\label{da5eq24}
\ea
the analogue of \eq{da5eq16}, we find that $(\Phi,\phi)$ satisfy
Proposition \ref{da5prop2}(a) with $k\om$ in place of~$\om$.
\end{ex}

\begin{rem} 
\label{da5rem1}
In Example \ref{da5ex4}, if $m_{2d+1}$ is even, or
equivalently if the {\it virtual dimension\/} $\mathop{\rm vdim}\bX$
of $\bX$ is even, given by
\begin{equation*}
\mathop{\rm vdim}\bX=-m_{2d+1}+2\ts\sum_{i=0}^{2d}(-1)^im_i,
\end{equation*}
then we may change variables from $z_1^{-2d-1},\ldots,
z^{-2d-1}_{m_{2d+1}}$ to $x_1^{-2d-1},\ldots,\ab
x^{-2d-1}_{m_{2d+1}/2},\ab
y_1^{-2d-1},\ab\ldots,y^{-2d-1}_{m_{2d+1}/2}$ defined for
$j=1,\ldots,m_{2d+1}/2$ by
\begin{equation*}
x_j^{-2d-1}=z_j^{-2d-1}+\sqrt{-1}\,z_{j+m_{2d+1}/2}^{-2d-1},\quad
y_j^{-2d-1}=z_j^{-2d-1}-\sqrt{-1}\,z_{j+m_{2d+1}/2}^{-2d-1}.
\end{equation*}
Then replacing $m_{2d+1}$ by $m_{2d+1}/2$, we find that $A$ is
freely generated over $A(0)$ by variables
\begin{align*}
&x_1^{-i},\ldots,x^{-i}_{m_i} &&\text{in degree $-i$ for
$i=1,\ldots,2d$,} \\
&x_1^{-2d-1},\ldots,\ab
x^{-2d-1}_{m_{2d+1}},\ab
y_1^{-2d-1},\ab\ldots,y^{-2d-1}_{m_{2d+1}}
\!\!\!\!\!\!\!\!\!\!{}
&&\qquad\text{in degree $-2d-1$, and} \\
&y_1^{i-4d-2},\ldots,y^{i-4d-2}_{m_i} &&\text{in degree $i-4d-2$ for
$i=0,1,\ldots,2d$,}
\end{align*}
and the symplectic form is given by
\begin{equation*}
\om^0=\sum_{i=0}^{2d+1}\sum_{j=1}^{m_i}\dd
x^{-i}_j\,\dd y^{i-4d-2}_j,
\end{equation*}
and everything works out as in Examples \ref{da5ex2}
and~\ref{da5ex3}.
\end{rem}

\begin{ex} 
\label{da5ex5}
Here is a variation on Example \ref{da5ex4}. Choose
$d,A(0),m_0,\ab\ldots,\ab m_{2d+1},\ab x^i_j,y^i_j,z^i_j$ and the
commutative graded algebra $A$ as in Example \ref{da5ex4}. Let
$q_1,\ldots,q_{m_{2d+1}}$ be invertible elements of $A(0)$, and
generalizing \eq{da5eq21} define
\e
\om^0=\sum_{i=0}^{2d}\sum_{j=1}^{m_i}\dd
x^{-i}_j\,\dd y^{i-4d-2}_j+\sum_{j=1}^{m_{2d+1}}\dd\bigl(q_j z^{-2d-1}_j\bigr)\,\dd
z^{-2d-1}_j.
\label{da5eq25}
\e

Choose a Hamiltonian $\Phi$ in $A^{-4d-1}$, which we require to satisfy
the {\it classical master equation\/}
\e
\sum_{i=1}^{2d}\sum_{j=1}^{m_i}\frac{\pd \Phi}{\pd x^{-i}_j}\,
\frac{\pd \Phi}{\pd y^{i-4d-2}_j}+\frac{1}{4}\sum_{j=1}^{m_{2d+1}}
\frac{1}{q_j}\,\biggl(\frac{\pd \Phi}{\pd
z^{-2d-1}_j}\biggr)^2=0\quad\text{in $A^{-4d}$.}
\label{da5eq26}
\e
As for \eq{da5eq23}, define the differential $\d$ on $A$ by $\d=0$
on $A(0)$, and
\begin{gather}
\d x^{0}_j=0,\quad \d y^{-4d-2}_j=\frac{\pd \Phi}{\pd
x^0_j}-\sum_{j'=1}^{m_{2d+1}} \frac{z_{j'}^{-2d-1}}{2q_{j'}}\,\frac{\pd
q_{j'}}{\pd x^0_j}\,\frac{\pd \Phi}{\pd z^{-2d-1}_{j'}}, \quad
j=1,\ldots,m_0,
\nonumber\\
\d x^{-i}_j =(-1)^{i+1}\frac{\pd \Phi}{\pd y^{i-4d-2}_j}, \quad \d
y^{i-4d-2}_j=\frac{\pd \Phi}{\pd x^{-i}_j},\quad\begin{subarray}{l}\ts
i=1,\ldots,2d,\\[6pt] \ts j=1,\ldots,m_i,\end{subarray}
\nonumber\\
\text{and}\qquad\d z^{-2d-1}_j=\frac{1}{2q_j}\,\frac{\pd \Phi}{\pd
z^{-2d-1}_j}, \qquad j=1,\ldots,m_{2d+1}.
\label{da5eq27}
\end{gather}
Similar proofs to \eq{da5eq13}--\eq{da5eq15} show that $\d\ci\d=0$,
so that $(A,d)=A(n)$ for $n=4d+2$ is a standard form cdga over $\K$,
as in Example \ref{da2ex2}, and $\om:=(\om^0,0,0,\ldots)$ is a
$k$-shifted symplectic structure on $\bX=\bSpec A$ for $k=-4d-2$.

Defining 
\begin{align*}
\phi&=-\sum_{i=0}^{2d}\sum_{j=1}^{m_i}\bigl[i\,x^{-i}_j\,\dd y^{i-4d-2}_j+(-1)^{i+1}(4d+2-i)y^{i-4d-2}_j\,\dd x^{-i}_j\bigr]\\
&\qquad-(4d+2)\sum_{j=1}^{m_{2d+1}}q_j\,z^{-2d-1}_j\,\dd z^{-2d-1}_j
\qquad\text{in $(\Om^1_A)^{-4d-2}$,}
\end{align*}
the analogue of \eq{da5eq24}, we find that $(\Phi,\phi)$ satisfy
Proposition \ref{da5prop2}(a) with $k\om$ in place of~$\om$.
\end{ex}

\begin{rem} 
\label{da5rem2}
To relate Examples \ref{da5ex4} and \ref{da5ex5}, note
that if $q_1=\cdots=q_{m_{2d+1}}=1$ then Example \ref{da5ex5}
reduces immediately to Example \ref{da5ex4}. More generally, if
$q_1,\ldots,q_{m_{2d+1}}$ admit square roots $q_i^{1/2}$ in $A(0)$,
then changing variables in $A$ from $x^i_j,y^i_j,z^{-2d-1}_j$ to
$x^i_j,y^i_j$ and $\ti z^{-2d-1}_j=q_j^{1/2}z^{-2d-1}_j$, we find
that Example \ref{da5ex5} reduces to Example \ref{da5ex4} with $\ti
z^{-2d-1}_j$ in place of~$z^{-2d-1}_j$.
\end{rem}

\begin{dfn} 
\label{da5def5}
We say that the standard form cdga $A$ over $\K$ and
$k$-shifted symplectic structure $\om$ on $\bX=\bSpec A$ in Examples
\ref{da5ex2} and \ref{da5ex3} are {\it in Darboux form}. We say that
$A,\om$ in Example \ref{da5ex4} are {\it in strong Darboux form},
and $A,\om$ in Example \ref{da5ex5} are {\it in weak Darboux form}.
\end{dfn}

\subsection{\texorpdfstring{Darboux forms when $k=-1,-2$ and $-3$}{Darboux forms when k=-1,-2 and -3}}
\label{da54}

We now write out the `Darboux form' $k$-shifted symplectic cdgas
$(A,\om)$ of Examples \ref{da5ex2}--\ref{da5ex4} and \ref{da5ex5}
more explicitly in the first three cases $k=-1$, $k=-2$, and $k=-3$.
These correspond to the geometric structures on moduli schemes of
coherent sheaves on Calabi--Yau $m$-folds for~$m=3,4,5$.

\begin{ex} 
\label{da5ex6}
Choose a smooth $\K$-algebra $A(0)$ of dimension $m_0$
and elements $x^0_1,\ldots,x^0_{m_0}\in A(0)$ such that $\dd
x^0_1,\ldots,\dd x^0_{m_0}$ form a basis of $\Om^1_{A(0)}$ over
$A(0)$. Choose an arbitrary Hamiltonian $\Phi\in A(0)$.

Example \ref{da5ex2} with $d=0$ defines $A=A(0)[y^{-1}_1,\ldots,
y^{-1}_{m_0}]$, where $y^{-1}_1,\ab\ldots,\ab y^{-1}_{m_0}$ are
variables of degree $-1$, with differential
\begin{equation*}
\d x^0_i=0,\quad \d y^{-1}_i=\frac{\pd \Phi}{\pd x^0_i}, \quad
i=1,\ldots,m_0,
\end{equation*}
and $-1$-shifted 2-form
\begin{equation*}
\om^0=\dd x^0_1\,\dd y^{-1}_1+\cdots+\dd
x^0_{m_0}\,\dd y^{-1}_{m_0}.
\end{equation*}
Then $\om=(\om^0,0,0,\ldots)$ is a $-1$-shifted symplectic structure
on $\bX=\bSpec A$. We have $H^0(A)=A(0)/(\frac{\pd \Phi}{\pd
x^0_1},\ldots,\frac{\pd \Phi}{\pd x^0_{m_0}})=A(0)/(\dd \Phi)$.

Geometrically, $U=\Spec A(0)$ is a smooth classical $\K$-scheme with
\'etale coordinates $(x^0_1,\ldots,x^0_{m_0}):U\ra\bA^{m_0}$, and
$\Phi:U\ra\bA^1$ is regular, and $\bX$ is the derived critical locus of
$\Phi$, with $X=t_0(\bX)$ the classical critical locus of $\Phi$. As in
Proposition \ref{da5prop2}(b), $\Phi\vert_{X^\red}:X^\red\ra\bA^1$ is
locally constant, and we generally suppose $\Phi$ is chosen so that
$\Phi\vert_{X^\red}=0$. If $X$ is connected this can be achieved by
adding a constant to $\Phi$.

Thus, {\it the important geometric data in writing a $-1$-shifted
symplectic derived\/ $\K$-scheme $(\bX,\om)$ in Darboux form, is a
smooth affine\/ $\K$-scheme $U$ and a regular function
$\Phi:U\ra\bA^1,$ which we may take to satisfy
$\Phi\vert_{\Crit(\Phi)^\red}=0,$ such that\/} $X=t_0(\bX)\cong\Crit(\Phi)$. The remaining data is a choice of \'etale coordinates
$(x^0_1,\ldots,x^0_{m_0}):U\ra \bA^{m_0}$, but this is not very
interesting geometrically.

\end{ex}

\begin{ex} 
\label{da5ex7}
We will work out Example \ref{da5ex5} with $d=0$ and
$k=-2$ in detail. For Example \ref{da5ex4} with $d=0$, set
$q_1=\cdots=q_{m_1}=1$ in what follows.

Choose a smooth $\K$-algebra $A(0)$ of dimension $m_0$ and elements
$x^0_1,\ldots,x^0_{m_0}$ in $A(0)$ such that $\dd x^0_1,\ldots,\dd
x^0_{m_0}$ form a basis of $\Om^1_{A(0)}$ over $A(0)$. Fix $m_1\ge
0$, and as a commutative graded algebra set
$A=A(0)[z^{-1}_1,\ab\ldots,\ab z^{-1}_{m_1},\ab y^{-2}_1,\ab
\ldots,\ab y^{-2}_{m_0}]$, where $z^{-1}_j$ has degree $-1$ and
$y^{-2}_j$ degree $-2$.

Choose invertible functions $q_1,\ldots,q_{m_1}$ in $A(0)$. Define
\begin{align*}
\om^0&=\dd x^0_1\,\dd y^{-2}_1+\cdots+\dd x^0_{m_0}\,\dd y^{-2}_{m_0}\\
&\qquad+\dd\bigl(q_1z^{-1}_1\bigr)\,\dd z^{-1}_1+\cdots+
\dd\bigl(q_{m_1}z^{-1}_{m_1}\bigr)\,\dd z^{-1}_{m_1}
\end{align*}
in $(\La^2\Om^1_A)^{-2}$, as in \eq{da5eq25}. A general element $\Phi$
in $A^{-1}$ may be written
\begin{equation*}
\Phi=z^{-1}_1s_1+\cdots+z^{-1}_{m_1}s_{m_1},
\end{equation*}
for $s_1,\ldots,s_{m_1}\in A(0)$. Then the classical master equation
\eq{da5eq26} reduces to
\e
\frac{(s_1)^2}{q_1}+\cdots+\frac{(s_{m_1})^2}{q_{m_1}}=0\quad\text{in
$A(0)$.}
\label{da5eq28}
\e
By \eq{da5eq27}, the differential $\d$ on $A$ is given by
\begin{equation*}
\d x_i^0=0,\quad \d z_j^{-1}=\frac{s_j}{2q_j}, \quad \d y^{-2}_i=
\sum_{j=1}^{m_1}z_j^{-1}\biggl(\frac{\pd s_j}{\pd x^0_i}
-\frac{s_j}{2q_j}\,\frac{\pd q_j}{\pd x^0_i}\biggr),
\end{equation*}
and $\d\ci\d y^{-2}_i=0$ follows from applying
$\frac{1}{4}\frac{\pd}{\pd x^0_i}$ to \eq{da5eq28}. We have
\begin{equation*}
H^0(A)=A(0)/\bigl(s_1/2q_1,\ldots,s_{m_1}/2q_{m_1}\bigr)=
A(0)/(s_1,\ldots,s_{m_1}),
\end{equation*}
as $q_1,\ldots,q_{m_1}$ are invertible.

Geometrically, we have a smooth classical $\K$-scheme $U=\Spec A(0)$
with \'etale coordinates $(x^0_1,\ldots,x^0_{m_0}):U\ra \bA^{m_0}$,
a trivial vector bundle $E\ra U$ with fibre $\K^{m_1}$, a
nondegenerate quadratic form $Q$ on $E$ given by
$Q(e_1,\ldots,e_{m_1})=q_1^{-1}e_1^2+\cdots+q_{m_1}^{-1}e_{m_1}^2$
for all regular functions $e_1,\ldots,e_{m_1}:U\ra\bA^1$, and a
section $s=(s_1,\ldots,s_{m_1})$ in $H^0(E)$ with $Q(s,s)=0$ by
\eq{da5eq28}. The underlying classical $\K$-scheme $X=t_0(\bX)=\Spec
H^0(A)$ is the $\K$-subscheme $s^{-1}(0)$ in~$U$.

Thus, {\it the important geometric data in writing a $-2$-shifted
symplectic derived\/ $\K$-scheme $(\bX,\om)$ in Darboux form, is a
smooth affine\/ $\K$-scheme $U,$ a vector bundle $E\ra U,$ a
nondegenerate quadratic form $Q$ on $E,$ and a section $s\in H^0(E)$
with\/ $Q(s,s)=0,$ such that\/} $X=t_0(\bX)\cong s^{-1}(0)\subseteq
U$. The remaining data is a choice of \'etale coordinates
$(x^0_1,\ldots,x^0_{m_0}):U\ra \bA^{m_0}$ and a trivialization
$E\cong U\t\bA^{m_1}$, but these are not very interesting
geometrically.
\end{ex}

\begin{ex} 
\label{da5ex8}
We will work out Example \ref{da5ex2} with $d=1$ and
$k=-3$ in detail. Choose a smooth $\K$-algebra $A(0)$ of dimension
$m_0$ and elements $x^0_1,\ldots,x^0_{m_0}$ in $A(0)$ such that $\dd
x^0_1,\ldots,\dd x^0_{m_0}$ form a basis of $\Om^1_{A(0)}$ over
$A(0)$. Fix $m_1\ge 0$, and as a commutative graded algebra set
\begin{equation*}
A=A(0)\bigl[x^{-1}_1,\ldots,x^{-1}_{m_1},y^{-2}_1,
\ldots,y^{-2}_{m_1},y^{-3}_1,\ldots,y^{-3}_{m_0}\bigr],
\end{equation*}
where $x^{-1}_i,y^{-2}_i,y^{-3}_i$ have degrees $-1,-2,-3$
respectively. Define
\begin{align*}
\om^0&=\dd x^0_1\,\dd y^{-3}_1+\cdots+\dd x^0_{m_0}\,\dd y^{-3}_{m_0}\\
&\qquad+\dd x^{-1}_1\,\dd y^{-2}_1+\cdots+\dd x^{-1}_{m_1}\,\dd y^{-2}_{m_1}
\end{align*}
in $(\La^2\Om^1_A)^{-3}$, as in \eq{da5eq9}. A general element $\Phi$
in $A^{-2}$ may be written
\begin{equation*}
\Phi=\ts\sum_{i=1}^{m_1}y^{-2}_is_i+\sum_{i,j=1}^{m_1}x_i^{-1}
x_{j}^{-1}t_{ij},
\end{equation*}
for $s_i,t_{ij}\in A(0)$ with $t_{ij}=-t_{ji}$. Then the classical
master equation \eq{da5eq10} reduces to
$2\sum_{i,j=1}^{m_1}x_i^{-1}t_{ij}s_j=0$ in $A^{-1}$, or
equivalently the $m_1$ equations
\e
\ts\sum_{j=1}^{m_1}t_{ij}s_j=0\quad\text{in $A(0)$ for all
$i=1,\ldots,m_1$.}
\label{da5eq29}
\e

By \eq{da5eq12}, the differential $\d$ on $A$ is given by
\begin{gather*}
\d x^0_i=0,\quad \d x^{-1}_i=s_i,\quad \d y^{-2}_i=2
\ts\sum_{j=1}^{m_1}x_{j}^{-1}t_{ij},\\
\d y^{-3}_{i'}= \ts\sum_{i=1}^{m_1}y^{-2}_i\frac{\pd s_i}{\pd
x^0_{i'}}+\sum_{i,j=1}^{m_1}x_i^{-1}x_{j}^{-1}\frac{\pd t_{ij}}{\pd
x^0_{i'}}.
\end{gather*}
Thus $H^0(A)=A(0)/(s_1,\ldots,s_{m_1})$.

Geometrically, we have a smooth classical $\K$-scheme $U=\Spec A(0)$
equipped with \'etale coordinates $(x^0_1,\ldots,x^0_{m_0}):U\ra
\bA^{m_0}$, a trivial vector bundle $E\ra U$ over $U$ with fibre
$\K^{m_1}$, and sections $s=(s_1,\ldots,s_{m_1})$ in $H^0(E)$ and
$t=(t_{ij})_{i,j=1}^{m_1}$ in $H^0(\La^2E^*)$, where $t$ need not be
nondegenerate, such that regarding $t$ as a morphism $E\ra E^*$ we
have $t\ci s=0$ by \eq{da5eq29}. The underlying classical
$\K$-scheme $X=t_0(\bX)=\Spec H^0(A)$ is the $\K$-subscheme $s=0$
in~$U$.

Thus, {\it the important geometric data in writing a $-3$-shifted
symplectic derived\/ $\K$-scheme $(\bX,\om)$ in Darboux form, is a
smooth affine\/ $\K$-scheme $U,$ a vector bundle $E\ra U,$ and
sections $s\in H^0(E)$ and\/ $t\in H^0(\La^2E^*)$ such that\/ $t\ci
s=0\in H^0(E^*),$ regarding $t$ as morphism $E\ra E^*,$ and\/}
$X=t_0(\bX)\cong s^{-1}(0)\subseteq U$. The remaining data is a
choice of \'etale coordinates $(x^0_1,\ldots,x^0_{m_0}):U\ra
\bA^{m_0}$ and a trivialization $E\cong U\t\bA^{m_1}$, but these are
not very interesting geometrically.
\end{ex}

\subsection{A Darboux-type theorem for derived schemes}
\label{da55}

Here is the main theorem of this paper, a $k$-shifted analogue of
Darboux' theorem in symplectic geometry, which will be proved
in~\S\ref{da56}.

\begin{thm} 
\label{da5thm1}
Let\/ $\bX$ be a derived\/ $\K$-scheme with\/
$k$-shifted symplectic form $\ti\om$ for $k<0,$ and\/ $x\in\bX$.
Then there exists a standard form cdga $A$ over $\K$ which is
minimal at\/ $p\in\Spec H^0(A),$ a $k$-shifted symplectic form\/
$\om$ on $\bSpec A,$ a morphism $\bs f:\bSpec A\ra\bX$ with\/
$\bs f(p)=x$, and\/ a path $\bs f^*(\ti\om)\sim\om$ in the space of\/ $k$-shifted closed\/ $2$-forms on $\bSpec A$ such that
\begin{itemize}
\setlength{\itemsep}{0pt}
\setlength{\parsep}{0pt}
\item[{\bf(i)}] If\/ $k$ is odd or divisible by $4,$ then $\bs f$ is
a Zariski open inclusion, and\/ $A,\om$ are in Darboux form, as
in Examples\/ {\rm\ref{da5ex2}} and\/~{\rm\ref{da5ex3}}.
\item[{\bf(ii)}] If\/ $k\equiv 2\mod 4,$ then $\bs f$ is
a Zariski open inclusion, and\/ $A,\om$ are in weak Darboux
form, as in Example\/~{\rm\ref{da5ex5}}.
\item[{\bf(iii)}] Alternatively, if\/ $k\equiv 2\mod 4,$ then we may
instead take $\bs f$ to be \'etale, and\/ $A,\om$ to be in
strong Darboux form, as in Example\/~{\rm\ref{da5ex4}}.
\end{itemize}
\end{thm}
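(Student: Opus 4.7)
The plan is to reduce to an affine local model, simplify the closed form using cyclic homology, and then standardize the resulting $2$-form and Hamiltonian by an inductive change of coordinates from the deepest degree upwards.

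First I would apply Theorem~\ref{da4thm1} to produce a standard form cdga $B$ of length $n$, minimal at some $q\in\Spec H^0(B)$, together with a Zariski open inclusion $\bs g:\bSpec B\ra\bX$ with $\bs g(q)=x$. Pulling back $\ti\om$ gives a $k$-shifted symplectic form on $\bSpec B$. Proposition~\ref{da5prop2} then lets me replace this form, up to equivalence, by $\ti\om_B\sim(\dd\phi,0,0,\ldots)$ with $\Phi\in B^{k+1}$, $\phi\in(\Om^1_B)^k$ satisfying $\d\Phi=0$ and $\dd\Phi+\d\phi=0$ (with $\Phi\vert_{X^\red}=0$ in the case $k=-1$). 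Thus I am reduced to finding a further localization (or \'etale cover in case (iii)) on which the generators of $B$ can be traded for new generators $x^{-i}_j,y^{k+i}_j$ (and possibly $z^{k/2}_j$) so that $\dd\phi$ becomes the standard Darboux form of Examples~\ref{da5ex2}--\ref{da5ex5}, and the relation $\dd\Phi+\d\phi=0$ forces $\d$ to be given by Hamilton's equations for $H\propto\Phi$.

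The heart of the argument is an induction on the depth, performed from the deepest generators upward. At the base point $p$, minimality of $B$ means the restricted cotangent complex has zero differential, so nondegeneracy of $\om^0$ induces an honest linear isomorphism $\om^0\vert_p:\bT_B\vert_p\ra\Om^1_B[k]\vert_p$ pairing the degree $-i$ part with the degree $k+i$ part. When $k$ is odd (Example~\ref{da5ex2}) these pairings involve no self-dual degree; when $k=-4d$ the middle degree $-2d$ is antisymmetrically self-paired and can be put into standard symplectic form by linear algebra over $A(0)$; when $k=-4d-2$ the middle degree $-2d-1$ is symmetrically self-paired, so nondegeneracy amounts to a nondegenerate quadratic form. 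Zariski-locally this quadratic form can be diagonalized with invertible coefficients $q_j$ (giving weak Darboux, case (ii)), while \'etale-locally we adjoin square roots $q_j^{1/2}$ to obtain strong Darboux (case (iii)). Starting from this linear identification at $p$ and using Nakayama, I would replace, after localization, each module of generators $M^{-i}$ by a module freely spanned by chosen dual pairs.

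The main obstacle will be the inductive step: after fixing duality of generators at the deepest level and writing $\om^0$ as the sum of Darboux pairings plus lower-weight corrections, I must modify $\phi$ by a term $\d\al^0\in(\La^2\Om^1_A)^k$ and correspondingly adjust $\Phi$ by $\d\Psi,\,\dd\Psi+\d\psi$ as allowed by Proposition~\ref{da5prop2}(c), so as to kill off the non-standard corrections one degree at a time without disturbing deeper levels already standardized. The bookkeeping is delicate because each change of variables in the $x^{-i}_j,y^{k+i}_j$ affects both $\d\phi$ and $\dd\Phi$, and the higher $(\Om^1_A)^l$ pieces are coupled through the differential; the key input that makes this converge is that $B$ is already minimal at $p$ so corrections vanish to first order at $p$, letting me appeal iteratively to the implicit function theorem or Nakayama on coefficients. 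Once $\om^0$ is in standard Darboux form, the remaining identity $\dd\Phi+\d\phi=0$ expands, using the explicit $\phi$ of \eq{da5eq16} (or its analogues \eq{da5eq24}), into exactly Hamilton's equations \eq{da5eq12}, \eq{da5eq19}, \eq{da5eq23} or \eq{da5eq27} for $\d$, and $\d\ci\d=0$ becomes the classical master equation. Finally I set $\bs f=\bs g\ci\bs h$ where $\bs h:\bSpec A\ra\bSpec B$ is the (Zariski or \'etale) equivalence constructed by the coordinate change, and verify $\bs f(p)=x$ and $\bs f^*(\ti\om)\sim\om$ by construction.
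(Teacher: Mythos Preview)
Your overall architecture matches the paper's: Theorem~\ref{da4thm1} for the standard form model, Proposition~\ref{da5prop2} to reduce to $\om=(\dd\phi,0,0,\ldots)$ with $\dd\Phi+\d\phi=0$, then coordinate normalization, and finally reading off Hamilton's equations and the master equation from $\dd\Phi+\d\phi=0$ and $\d\Phi=0$. The treatment of the middle degree when $k\equiv 2\bmod 4$ (diagonalize Zariski-locally, adjoin square roots \'etale-locally) is also exactly as in the paper.

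Where you diverge is in the normalization step, and here you are making life harder than necessary. You anticipate a delicate induction ``from the deepest degree upwards,'' adjusting $\phi$ and $\Phi$ level by level via Proposition~\ref{da5prop2}(c), with iterated appeals to Nakayama or the implicit function theorem. The paper avoids all of this. Once $\om^0:(V^{k-i})^*\ra V^{-i}$ is an isomorphism of free $A(0)$-modules (one localization, by minimality at $p$), choose the $x^{-i}_j$ arbitrarily and then choose the $y^{k+i}_j$ to be dual bases under $\om^0$, so that $\om^0\ot\id_{A(0)}$ is already standard. Now write $\phi=\sum a^{k+i}_j\,\dd x^{-i}_j+\sum b^{-i}_j\,\dd y^{k+i}_j$; a single subtraction of $\dd(\sum(-1)^ib^{-i}_jy^{k+i}_j)$ from $\phi$ (and of the corresponding $\d$-term from $\Phi$) kills all $b^{-i}_j$ at once. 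Then $\om^0=\dd\phi=\sum\dd a^{k+i}_j\,\dd x^{-i}_j$, and comparing with the known standard form shows $a^{k+i}_j=y^{k+i}_j+(\text{higher})$, so the $a^{k+i}_j$ are themselves admissible as \emph{new} $y$-variables. Taking $y^{k+i}_j:=a^{k+i}_j$ is a single global change of generators making $\om^0$ exactly standard and $\phi=\sum y^{k+i}_j\,\dd x^{-i}_j$; one more exact correction brings $\phi$ to the specific form \eq{da5eq16}. No iteration, no implicit function theorem. Your inductive scheme would presumably converge, but the paper's one-shot trick of promoting the coefficients of $\phi$ to coordinates is both simpler and what makes the argument clean; I would recommend replacing your ``main obstacle'' paragraph with this.
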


The theorem has interesting consequences even in classical algebraic
geometry. For example, let $Y$ be a Calabi--Yau $m$-fold over $\K$,
that is, a smooth projective $\K$-scheme with trivial canonical bundle. 
Suppose $\cM$ is a classical moduli $\K$-scheme of simple coherent sheaves in $\coh(Y)$, where we
call $F\in\coh(Y)$ {\it simple\/} if $\Hom(F,F)=\K$. More generally,
suppose $\cM$ is a moduli $\K$-scheme of simple complexes of
coherent sheaves in $D^b\coh(Y)$, where we call $F^\bu\in
D^b\coh(Y)$ {\it simple\/} if $\Hom(F^\bu,F^\bu)=\K$ and
$\mathop{\rm Ext}^{<0}(F^\bu,F^\bu)=0$. (Such moduli spaces $\cM$ are
only known to be algebraic $\K$-spaces in general, but we assume
$\cM$ is a $\K$-scheme. The case of algebraic spaces can be treated
by using \'etale in place of Zariski neighbourhoods.)

Then $\cM=t_0(\bs\cM)$, for $\bs\cM$ the corresponding derived
moduli $\K$-scheme. To make $\cM,\bs\cM$ into schemes rather than
stacks, we consider moduli of sheaves or complexes with fixed
determinant. Then Pantev et al.\ \cite[\S 2.1]{PTVV} prove $\bs\cM$
has a $(2-m)$-shifted symplectic structure $\om$, so Theorem
\ref{da5thm1} shows that $(\bs\cM,\om)$ is Zariski locally modelled
on $(\bSpec A,\om)$ as in Examples \ref{da5ex2}--\ref{da5ex4} and
\ref{da5ex5}, and $\cM$ is Zariski locally modelled on $\Spec
H^0(A)$. In the case $m=3$, so that $k=-1$, from Example
\ref{da5ex6} we deduce:

\begin{cor} 
\label{da5cor1}
Suppose $Y$ is a Calabi--Yau\/ $3$-fold over a field\/
$\K,$ and\/ $\cM$ is a classical moduli $\K$-scheme of simple
coherent sheaves, or simple complexes of coherent sheaves, on $Y$.
Then for each\/ $[F]\in\cM,$ there exist a smooth\/ $\K$-scheme $U$
with\/ $\dim U=\dim\Ext^1(F,F),$ a regular function $f:U\ra\bA^1,$
and an isomorphism from $\Crit(f)\subseteq U$ to a Zariski open
neighbourhood of\/ $[F]$ in\/~$\cM$.
\end{cor}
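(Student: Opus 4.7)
The plan is to combine the PTVV construction with Theorem \ref{da5thm1}(i) and Example \ref{da5ex6}. By \cite[\S 2.1]{PTVV}, the derived moduli $\K$-scheme $\bs\cM$ of simple (complexes of) coherent sheaves on the Calabi--Yau $3$-fold $Y$ (with fixed determinant, so that $\bs\cM$ is a scheme rather than a stack) carries a $(2-3)=-1$-shifted symplectic structure $\ti\om$, and $\cM=t_0(\bs\cM)$. Given $[F]\in\cM$, I would apply Theorem \ref{da5thm1}(i) to $(\bs\cM,\ti\om)$ at the point $[F]$ with $k=-1$: this yields a standard form cdga $A$ over $\K$ which is minimal at some $p\in\Spec H^0(A)$, a Darboux-form symplectic form $\om$ on $\bSpec A$, and a Zariski open inclusion $\bs f:\bSpec A\ra\bs\cM$ with $\bs f(p)=[F]$ and $\bs f^*(\ti\om)\sim\om$.

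Next I would invoke Example \ref{da5ex6}, which unpacks the $k=-1$ Darboux form explicitly: $A$ is built from a smooth affine $\K$-algebra $A(0)$ together with degree $-1$ generators $y^{-1}_1,\ldots,y^{-1}_{m_0}$ dual to \'etale coordinates $x^0_1,\ldots,x^0_{m_0}$, with differential $\d y^{-1}_i = \partial H/\partial x^0_i$ for a Hamiltonian $H\in A(0)$. Setting $U:=\Spec A(0)$ and $f:=H:U\ra\bA^1$, this exhibits $\bSpec A\simeq\bs\Crit(f)$, so taking classical truncations gives
\[
t_0(\bSpec A)\;=\;\Spec H^0(A)\;\cong\;\Crit(f)\subseteq U.
\]
Since $\bs f$ is a Zariski open inclusion of derived schemes, $t_0(\bs f)$ is a Zariski open inclusion of classical schemes, so $\Crit(f)$ is identified with a Zariski open neighbourhood of $[F]$ in $\cM$, as required.

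It remains to verify the dimension identity $\dim U=\dim\Ext^1(F,F)$. Here I would use the hypothesis that $A$ is \emph{minimal} at $p$: by Proposition \ref{da2prop} and Definition \ref{da2def6}, the restriction of $\bL_A$ to $p$ is a complex of free $H^0(A)$-modules with zero differentials, so its degree $0$ piece has rank $m_0=\dim U$. On the other hand, for moduli of simple complexes on a Calabi--Yau $3$-fold one has $\bT_{\bs\cM}|_{[F]}\simeq\R\Hom(F,F)_0[1]$, and pulling back along $\bs f$ identifies $h^0(\bT_A|_p)$ with $\Ext^1(F,F)$ (or its trace-free part, which has the same dimension at a simple sheaf since the trace splitting is in degrees $0$ and $3$). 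Combining these identifications gives $\dim U=m_0=\dim\Ext^1(F,F)$.

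The main subtlety I anticipate is bookkeeping for the dimension claim: one must match the minimal local model's degree $0$ rank with $\dim\Ext^1(F,F)$ via the chain of equivalences $\bs f^*(\bL_{\bs\cM})\simeq \bL_A$ and $\bL_{\bs\cM}|_{[F]}\simeq\R\Hom(F,F)_0^\vee[-1]$, and in the fixed-determinant setting keep track of the trace-free summand. Once this identification is in place, minimality guarantees no cancellation in degrees $0,1$ of the cotangent complex, so $m_0$ is indeed $\dim\Ext^1(F,F)$. Finally, by Proposition \ref{da5prop2}(b) we may assume $H|_{\Crit(H)^\red}=0$, although this refinement is not needed for the statement of the corollary.
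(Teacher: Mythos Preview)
Your proposal is correct and follows essentially the same route as the paper: invoke the PTVV $(2{-}m)$-shifted symplectic structure on $\bs\cM$, apply Theorem~\ref{da5thm1}(i) at $[F]$ for $k=-1$, read off the critical-locus description from Example~\ref{da5ex6}, and obtain $\dim U=\dim\Ext^1(F,F)$ from minimality of $A$ at $p$. Your treatment of the dimension identity is in fact more detailed than the paper's one-line justification.
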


Here $\dim U=\dim\Ext^1(F,F)$ comes from $A$ minimal at $p$ and $\bs
f(p)=[F]$ in Theorem \ref{da5thm1}. Related results are important in
Donaldson--Thomas theory \cite{JoSo,KoSo1,KoSo2}. When $\K=\C$ and
$\cM$ is a moduli space of simple coherent sheaves on $Y$, using
gauge theory and transcendental complex methods, Joyce and Song
\cite[Th.~5.4]{JoSo} prove that the underlying complex analytic
space $\cM^\an$ of $\cM$ is locally of the form $\Crit(f)$ for $U$ a
complex manifold and $f:U\ra\C$ a holomorphic function. Behrend and
Getzler announced the analogue of \cite[Th.~5.4]{JoSo} for moduli of
complexes in $D^b\coh(Y)$, but the proof has not yet appeared. Over
general $\K$, as in Kontsevich and Soibelman \cite[\S 3.3]{KoSo1}
the formal neighbourhood $\hat\cM_{[F]}$ of $\cM$ at any $[F]\in\cM$
is isomorphic to the critical locus $\Crit(\hat f)$ of a formal
power series $\hat f$ on $\Ext^1(F,F)$ with only cubic and higher
terms.

In the case $m=4$, so that $k=-2$, from Example \ref{da5ex7} we
deduce a local description of Calabi--Yau 4-fold moduli schemes,
which may be new:

\begin{cor} 
\label{da5cor2}
Suppose $Y$ is a Calabi--Yau\/ $4$-fold over a field\/
$\K,$ and\/ $\cM$ is a classical moduli $\K$-scheme of simple
coherent sheaves, or simple complexes of coherent sheaves, on $Y$.
Then for each\/ $[F]\in\cM,$ there exist a smooth\/ $\K$-scheme $U$
with\/ $\dim U=\dim\Ext^1(F,F),$ a vector bundle $E\ra U$ with\/
$\rank E= \dim\Ext^2(F,F),$ a nondegenerate quadratic form $Q$ on
$E,$ a section $s\in H^0(E)$ with\/ $Q(s,s)=0,$ and an isomorphism
from $s^{-1}(0)\subseteq U$ to a Zariski open neighbourhood of\/
$[F]$ in\/~$\cM$.
\end{cor}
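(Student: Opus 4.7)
The strategy is to combine the existence result of Pantev--To\"en--Vaqui\'e--Vezzosi with Theorem \ref{da5thm1}(ii) in the case $k=-2$, and then to unpack the resulting local model using Example \ref{da5ex7}, exactly in parallel with the proof sketch for Corollary \ref{da5cor1}.

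First I would invoke \cite[\S 2.1]{PTVV} to promote $\cM$ to a derived moduli $\K$-scheme $\bs\cM$ with $t_0(\bs\cM)\cong\cM$, carrying a $(2-m)$-shifted symplectic structure $\om$; for a Calabi--Yau 4-fold this is a $(-2)$-shifted symplectic form, so $k=-2\equiv 2\mod 4$. Fix $[F]\in\cM=t_0(\bs\cM)$. Applying Theorem \ref{da5thm1}(ii) at the point $[F]$ produces a standard form cdga $A$ over $\K$, minimal at some $p\in\Spec H^0(A)$, a $(-2)$-shifted symplectic form $\om$ on $\bSpec A$ in weak Darboux form as in Example \ref{da5ex5} (equivalently, Example \ref{da5ex7} for $d=0$), and a Zariski open inclusion $\bs f:\bSpec A\ra\bs\cM$ with $\bs f(p)=[F]$ and $\bs f^*(\ti\om)\sim\om$.

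Next I would read off the classical geometric data from Example \ref{da5ex7}. By construction $A$ is freely generated over $A(0)$ by variables $z^{-1}_1,\ldots,z^{-1}_{m_1}$ in degree $-1$ and $y^{-2}_1,\ldots,y^{-2}_{m_0}$ in degree $-2$, equipped with invertible elements $q_1,\ldots,q_{m_1}\in A(0)$ and a section $s=(s_1,\ldots,s_{m_1})$ of the trivial bundle, such that the classical master equation \eq{da5eq28} reads $Q(s,s)=0$ for the nondegenerate quadratic form $Q=\sum_j q_j^{-1}e_j^2$ on $E=U\t\bA^{m_1}$, where $U=\Spec A(0)$. Since $q_1,\ldots,q_{m_1}$ are invertible we have $\Spec H^0(A)=\Spec(A(0)/(s_1,\ldots,s_{m_1}))=s^{-1}(0)\subseteq U$, and the Zariski open inclusion $\bs f$ restricts on classical truncations to an isomorphism of $s^{-1}(0)$ with a Zariski open neighbourhood of $[F]$ in $\cM$.

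The main (and essentially the only nontrivial) step will be matching the dimensions $\dim U=\dim\Ext^1(F,F)$ and $\rank E=\dim\Ext^2(F,F)$. Here I would use two ingredients: (i) for a moduli scheme of simple objects on $Y$, the cotangent complex $\bL_{\bs\cM}$ at $[F]$ is quasi-isomorphic to $\R\Hom(F,F)^\vee[-1]$ (up to the trace-free summand fixing the determinant), so that $h^{-i}(\bL_{\bs\cM}|_{[F]})\cong\Ext^{i+1}(F,F)^*$ for $i=0,1$; and (ii) by Proposition \ref{da2prop} and Definition \ref{da2def6}, minimality of $A$ at $p$ forces all the differentials $\d^{-k}$ in the explicit complex \eq{da2eq7} for $\bL_A|_p$ to vanish at $p$, so that $h^0(\bL_A|_p)$ is free of rank $m_0=\dim U$ and $h^{-1}(\bL_A|_p)$ is free of rank $m_1=\rank E$. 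Combining (i) and (ii) via the equivalence $\bs f^*(\bL_{\bs\cM})\simeq\bL_A$ on the Zariski open inclusion, and noting $\bs f(p)=[F]$, gives the claimed dimensions. (As a consistency check, Serre duality on the CY 4-fold yields $\Ext^3(F,F)\cong\Ext^1(F,F)^*$, matching the degree $-2$ generators of $A$, and the nondegeneracy of $Q$ encodes Serre duality on $\Ext^2(F,F)$.) This completes the reduction of the corollary to Theorem \ref{da5thm1} and Example \ref{da5ex7}.
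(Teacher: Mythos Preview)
Your argument is correct and follows precisely the route the paper takes: invoke \cite{PTVV} to obtain a $(-2)$-shifted symplectic structure on $\bs\cM$, apply Theorem \ref{da5thm1}(ii), and read off the geometric data $U,E,Q,s$ from Example \ref{da5ex7}. Your dimension-matching via minimality of $A$ at $p$ and the identification $h^{-i}(\bL_{\bs\cM}|_{[F]})\cong\Ext^{i+1}(F,F)^*$ is exactly what the paper intends by ``$A$ minimal at $p$ and $\bs f(p)=[F]$'' (stated explicitly for Corollary \ref{da5cor1} and applying verbatim here).
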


If $(S,\om)$ is an algebraic symplectic manifold over $\K$ (that is,
a 0-shifted symplectic derived $\K$-scheme in the language of
\cite{PTVV}) and $L,M\subseteq S$ are Lagrangians, then Pantev et
al.\ \cite[Th.~2.10]{PTVV} show that the derived intersection
$\bX=L\t_S M$ has a $-1$-shifted symplectic structure. So Theorem
\ref{da5thm1} and Example \ref{da5ex6} imply:

\begin{cor} 
\label{da5cor3}
Suppose $(S,\om)$ is an algebraic symplectic manifold,
and\/ $L,M$ are algebraic Lagrangian submanifolds in $S$. Then the
intersection $X=L\cap M,$ as a classical\/ $\K$-subscheme of\/ $S,$
is Zariski locally modelled on the critical locus $\Crit(f)$ of a
regular function $f:U\ra\bA^1$ on a smooth\/ $\K$-scheme\/~$U$.
\end{cor}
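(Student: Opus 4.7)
The plan is to assemble three ingredients already available and read off the conclusion on classical truncations. First, form the derived intersection $\bX=L\t_S M$ in $\dSch_\K$. By Pantev--To\"en--Vaqui\'e--Vezzosi \cite[Th.~2.10]{PTVV}, since $L$ and $M$ are Lagrangian in the $0$-shifted symplectic manifold $(S,\om)$, the derived scheme $\bX$ carries a canonical $-1$-shifted symplectic structure $\ti\om$. Moreover, because $L,M,S$ are all classical, the classical truncation $t_0(\bX)$ agrees with the ordinary scheme-theoretic intersection $L\cap M$ as a classical $\K$-subscheme of $S$; the derived fibre product simply contributes higher obstruction data that is invisible on $t_0$.

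Second, fix an arbitrary point $x\in X=L\cap M=t_0(\bX)$. Since $k=-1$ is odd, Theorem \ref{da5thm1}(i) applies to $(\bX,\ti\om)$ at $x$: there exist a minimal standard form cdga $A$ over $\K$, a $-1$-shifted symplectic form $\om$ on $\bSpec A$ in Darboux form, a point $p\in\Spec H^0(A)$, and a Zariski open inclusion $\bs f:\bSpec A\hookra\bX$ with $\bs f(p)=x$ and $\bs f^*(\ti\om)\sim\om$.

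Third, invoke Example \ref{da5ex6}, which interprets the Darboux datum $(A,\om)$ for $d=0$: the cdga $A$ is precisely the Koszul-type resolution presenting $\bSpec A\simeq\bs\Crit(H)$ for a regular function $H:U\ra\bA^1$ on the smooth affine $\K$-scheme $U=\Spec A(0)$, with the classical master equation trivial in this degree. Taking $t_0$, we get $t_0(\bSpec A)\cong\Crit(H)\subseteq U$ as classical $\K$-schemes. Since $t_0$ sends Zariski open inclusions of derived schemes to Zariski open inclusions of classical schemes, $t_0(\bs f):\Crit(H)\hookra L\cap M$ identifies $\Crit(H)$ with a Zariski open neighbourhood of $x$ in $L\cap M$. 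As $x$ was arbitrary, this furnishes the required local model at every point.

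The only potentially nontrivial step is the passage from the derived equivalence $\bs f^*(\ti\om)\sim\om$ to the purely classical statement about the scheme $L\cap M$, which amounts to checking compatibility of truncation with both fibre products and Zariski open inclusions; both compatibilities are standard properties of $t_0:\dSch_\K\ra\Sch_\K$ recalled in \S\ref{da32}. Everything else is a direct citation of Theorem \ref{da5thm1}, the PTVV Lagrangian intersection theorem, and the dictionary of Example \ref{da5ex6} identifying Darboux form in degree $-1$ with a derived critical locus.
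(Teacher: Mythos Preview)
Your proof is correct and follows essentially the same approach as the paper: cite \cite[Th.~2.10]{PTVV} to equip the derived intersection $\bX=L\t_S M$ with a $-1$-shifted symplectic structure, apply Theorem~\ref{da5thm1}(i), and use Example~\ref{da5ex6} to identify the $k=-1$ Darboux model with a derived critical locus, then truncate. You have spelled out the truncation step in slightly more detail than the paper, but the argument is the same.
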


In real or complex symplectic geometry, it is easy to prove
analogues of Corollary \ref{da5cor3} using Darboux' Theorem or the
Lagrangian Neighbourhood Theorem. However, these do not hold for
algebraic symplectic manifolds, so it is not obvious how to prove
Corollary \ref{da5cor3} using classical techniques.

\subsection{Proof of Theorem \ref{da5thm1}}
\label{da56}

We divide the proof into four steps.
\smallskip

\noindent{\bf Step 1: locally represent $\bX$ by a minimal standard form cdga $A$.}

\noindent For all $k<0$, first apply Theorem \ref{da4thm1} to get a standard
form cdga $A$ over $\K$ minimal at $p\in\Spec H^0(A)$, and a Zariski
open inclusion $\bs f:\bSpec A\ra\bX$ with $\bs f(p)=x$. Then $\bs
f^*(\ti\om)$ is a closed 2-form of degree $k$ on $\bSpec A$. So
Proposition \ref{da5prop2}(a) with $k\bs f^*(\ti\om)$ in place of $\om$ gives $\Phi\in A^{k+1}$ and $\phi\in(\Om^1_A)^k$ such that $\d\Phi=0$ in $A^{k+2}$, and $\dd\Phi+\d\phi=0$ in $(\Om^1_A)^{k+1}$, and
\begin{equation*}
\bs f^*(\ti\om)\sim\ts\frac{1}{k}(\dd\phi,0,0,\ldots)=:(\om^0,0,0,\ldots)=:\om.
\end{equation*}

By Proposition \ref{da2prop}, $\Om^1_A\ot_A H^0(A)$ is a complex of
free $H^0(A)$-modules
\begin{equation*}
\xymatrix@C=27pt{ 0 \ar[r] & V^k \ar[r]^{\d^k} & V^{k+1}
\ar[r]^{\d^{k+2}} & \cdots  \ar[r]^{\d^{-2}} & V^{-1} \ar[r]^{\d^{-1}}
& V^0 \ar[r] & 0, }
\end{equation*}
with $\d^i\vert_p=0$ for $i=k,k+1,\ldots,-1$ by Definition
\ref{da2def6}, as $A$ is minimal at $p$. Since $\om$ is a
$k$-shifted symplectic form on $\bSpec A$, the morphism
$\om^0:\bT_A\ra\Om^1_A[k]$ in \eq{da5eq3} is a quasi-isomorphism.
Hence $\om^0\ot\id_{H^0(A)}:\bT_A\ot_A H^0(A)\ra\Om^1_A\ot_A
H^0(A)[k]$ is a quasi-isomorphism. That is, in the commutative
diagram
\e
\begin{gathered}
\xymatrix@C=11pt@R=15pt{ 0 \ar[r] & (V^0)^* \ar[d]^{\om^0}
\ar[rr]_{(\d^{-1})^*} && (V^{-1})^* \ar[d]^{\om^0}
\ar[rr]_{(\d^{-2})^*} && \cdots \ar[rr]_{(\d^{k+1})^*} &&
(V^{k+1})^* \ar[d]^{\om^0} \ar[rr]_{(\d^k)^*} &&
(V^k)^* \ar[d]^{\om^0} \ar[r] & 0 \\
0 \ar[r] & V^k \ar[rr]^{\d^k} && V^{k+1} \ar[rr]^{\d^{k+2}} &&
\cdots \ar[rr]^{\d^{-2}} && V^{-1} \ar[rr]^{\d^{-1}} && V^0 \ar[r] &
0,\!\!{} }\!\!\!\!\!\!{}
\end{gathered}
\label{da5eq30}
\e
the columns are a quasi-isomorphism. As the horizontal differentials
$\d^i,(\d^i)^*$ are zero at $p$, so the vertical maps are
isomorphisms at $p$, and hence isomorphisms in a neighbourhood of
$p$. Localizing $A$ at $p$ if necessary, we may therefore assume the
vertical maps $\om^0:(V^{k-i})^*\ra V^{i}$ in \eq{da5eq30} are
isomorphisms.
\smallskip

\noindent{\bf Step 2: proof of Theorem \ref{da5thm1}(i) when $k$ is odd.}

\noindent For the next part of the proof we first suppose $k$ is odd, so that
$k=-2d-1$ for $d=0,1,\ldots.$ Localizing $A$ at $p$ if necessary,
choose $x^0_1,\ldots, x^0_{m_0}\in A(0)$ such that $\dd
x^0_1,\ldots,\dd x^0_{m_0}$ form a basis of $\Om^1_{A(0)}\cong V^0$
over $A(0)$. Next, for $i=1,\ldots,d$, choose
$x^{-i}_1,\ldots,x^{-i}_{m_i}$ in $A^{-i}$ such that $\dd
x^{-i}_1,\ldots,\dd x^{-i}_{m_i}$ form a basis for $V^{-i}$ over
$A(0)$. Then, for $i=0,\ldots,d$, choose
$y^{i-2d-1}_1,\ldots,y^{i-2d-1}_{m_i}$ in $A^{i-2d-1}$ such that
$\dd y^{i-2d-1}_1,\ab\ldots,\ab\dd y^{i-2d-1}_{m_i}$ are the basis
of $V^{i-2d-1}$ over $A(0)$ which is dual to the basis $\dd
x^{-i}_1,\ldots,\dd x^{-i}_{m_i}$ for $V^{-i}$ under the
isomorphism~$\om^0:(V^{i-2d-1})^*\ra V^{-i}$.

Since $A$ is a standard form cdga, these variables $x^i_j$ for $i<0$
and $y^i_j$ generate $A$ freely over $A(0)$ as a commutative graded
algebra. That is, as a commutative graded algebra, $A$ is freely
generated over $A(0)$ by the graded variables \eq{da5eq8}, exactly
as in Example \ref{da5ex2}. Then the condition on $\om^0$ sending
the dual basis of $\dd x^{-i}_1,\ldots,\dd x^{-i}_{m_i}$ to $\dd
y^{i-2d-1}_1,\ab\ldots,\ab\dd y^{i-2d-1}_{m_i}$ implies that
\e
\om^0\ot\id_{A(0)}=\sum_{i=0}^d\sum_{j=1}^{m_i}\dd
x^{-i}_j\,\dd y^{i-2d-1}_j \quad \text{in $(\La^2\Om^1_A)^{-2d-1}\ot_AA(0)$.}
\label{da5eq31}
\e

As above we have $\Phi,\phi$ with $\dd\Phi+\d\phi=0$ and
$\dd\phi=k\om^0$. Using the coordinates $x^i_j,y^i_j$ we may write
\begin{equation*}
\phi=\sum_{i=0}^d\sum_{j=1}^{m_i}\bigl[a_j^{i-2d-1}\dd x^{-i}_{j}+
b_j^{-i}\dd y^{i-2d-1}_{j}\bigr],
\end{equation*}
with $a_j^l,b_j^l\in A^l$. For degree reasons, the $b_j^l$ depend on $A(0)$ and the $x^{-i}_{j'}$, but do not involve the $y^{i-2d-1}_{j'}$. By leaving $\om^0$ unchanged but replacing $\Phi,\phi$ by
\begin{equation*}
\ti\Phi=\Phi-\d\biggl[\sum_{i=0}^d\sum_{j=1}^{m_i}(-1)^ib^{-i}_j
y^{i-2d-1}_{j}\biggr], \quad \ti\phi=\phi-
\dd\biggl[\sum_{i=0}^d\sum_{j=1}^{m_i}(-1)^ib^{-i}_j
y^{i-2d-1}_{j}\biggr],
\end{equation*}
noting that $\dd b_j^l$ includes no terms in $\dd y^{i-2d-1}_{j'}$, we may assume that $b^{-i}_j=0$ for all $i,j$. Then $\dd\phi=k\om^0$ gives
\e
k\om^0=\sum_{i=0}^d\sum_{j=1}^{m_i}\dd a_j^{i-2d-1}\,\dd x^{-i}_{j}.
\label{da5eq32}
\e

Comparing \eq{da5eq31} and \eq{da5eq32} shows that
\begin{equation*}
a_j^{i-2d-1}=ky^{i-2d-1}_j+\text{degree $\ge 2$ terms in
$x^{-i'}_{j'}$ for $i>0$ and $y^{i''-2d-1}_{j''}$.}
\end{equation*}
Thus the $\frac{1}{k}a_j^{i-2d-1}$ are alternative choices for the $y^{i-2d-1}_j$ above. Hence, replacing $y^{i-2d-1}_j$ by
$\frac{1}{k}a_j^{i-2d-1}$ for all $i,j$, we see that $\om^0$ is given by
\eq{da5eq9}, and
\begin{equation*}
\phi=k\sum_{i=0}^d\sum_{j=1}^{m_i}y_j^{i-2d-1}\dd x^{-i}_{j}.
\end{equation*}
Leaving $\om^0$ unchanged but replacing $\Phi,\phi$ by
\begin{align*}
\ti\Phi&=\Phi-\d\biggl[\sum_{i=0}^d\sum_{j=1}^{m_i}(-1)^iix^{-i}_jy^{i-2d-1}_{j}\biggr], \\
\ti\phi&=\phi-\dd\biggl[\sum_{i=0}^d\sum_{j=1}^{m_i}(-1)^ii
x^{-i}_jy^{i-2d-1}_{j}\biggr],
\end{align*}
we find that $\phi$ is given by~\eq{da5eq16}.

Let us summarize our progress so far. In the case $k=-2d-1$ for
$d=0,1,\ldots,$ we have shown that we can identify $A$ as a
commutative graded algebra with the commutative graded algebra $A$
in Example \ref{da5ex2} generated over $A(0)$ by the graded
variables \eq{da5eq8}, we have a $k$-shifted symplectic form
$\om=(\om^0,0,0,\ldots)$ on $\bSpec A$ with $\om^0$ given by
\eq{da5eq9}, we have $\Phi\in A^{k+1}$ and $\phi\in(\Om^1_A)^k$ such
that $\d\Phi=0$, $\dd\Phi+\d\phi=0$, $\dd\phi=k\om^0$, and $\phi$ is
as in~\eq{da5eq16}.

It remains to show that $\Phi$ satisfies the classical master equation
\eq{da5eq10}, and the differential $\d$ on $A$ is given by
\eq{da5eq12}. Expanding $\dd\Phi+\d\phi=0$ using \eq{da5eq16}, comparing coefficients of $\dd y^{i-2d-1}_j,\dd x^{-i}_j$, and rearranging gives
\ea
&(2d+1)\d x_j^{-i}=\frac{\pd}{\pd y_j^{i-2d-1}}\raisebox{-3pt}{$\displaystyle\Biggl[$}\Phi-
\sum_{i'=0}^d\sum_{j'=1}^{m_{i'}}\begin{aligned}[t] &(i'-2d-1)y_{j'}^{i'-2d-1}\d x_{j'}^{-i'}\\
&\qquad -i'x_{j'}^{-i'}\d y_{j'}^{i'-2d-1}\end{aligned}\raisebox{-3pt}{$\displaystyle\Biggr]$},
\label{da5eq33}\\
&(2d+1)\d y_j^{i-2d-1}=\frac{\pd}{\pd x_j^{-i}}\raisebox{-3pt}{$\displaystyle\Biggl[$}\Phi-
\sum_{i'=0}^d\sum_{j'=1}^{m_{i'}}\begin{aligned}[t] &(i'-2d-1)y_{j'}^{i'-2d-1}\d x_{j'}^{-i'}\\
&\qquad -i'x_{j'}^{-i'}\d y_{j'}^{i'-2d-1}\end{aligned}\raisebox{-3pt}{$\displaystyle\Biggr]$}.
\label{da5eq34}
\ea
Write $F$ for the function in brackets $[\cdots]$ on the right hand sides of \eq{da5eq33}--\eq{da5eq34}. Using \eq{da5eq33}--\eq{da5eq34} to substitute for $\d x_{j'}^{-i'},\d y_{j'}^{i'-2d-1}$ gives
\begin{align*}
F&=\Phi-\frac{1}{2d+1}
\sum_{i'=0}^d\sum_{j'=1}^{m_{i'}}(i'-2d-1)y_{j'}^{i'-2d-1}\frac{\pd F}{\pd y_{j'}^{i'-2d-1}}-i'x_{j'}^{-i'}\frac{\pd F}{\pd x_{j'}^{-i'}}\\
&=\Phi-\frac{1}{2d+1}\cdot (-2d)\, F,
\end{align*}
where the second line holds as $F$ has degree $-2d$. Therefore $F=(2d+1)\Phi$, so \eq{da5eq33}--\eq{da5eq34} prove \eq{da5eq12}. Then expanding $\d\Phi=0$ using \eq{da5eq12} yields \eq{da5eq10}. This completes the proof of Theorem \ref{da5thm1}(i) when $k$ is odd.
\smallskip

\noindent{\bf Step 3: proof of Theorem \ref{da5thm1}(i),(ii) when $k$ is even.}

\noindent The remaining cases in Theorem \ref{da5thm1}(i),(ii) are fairly
similar, so we explain the differences with the case of $k$ odd. If
$k$ is divisible by 4, so that $k=-4d$ for $d=1,2,\ldots,$ choose
$x^0_1,\ldots, x^0_{m_0}\in A^0=A(0)$ and
$x^{-i}_1,\ldots,x^{-i}_{m_i}\in A^{-i}$ for $i=1,\ldots,2d-1$ as
above such that $\dd x^{-i}_1,\ldots,\dd x^{-i}_{m_i}$ form a basis
for $V^{-i}$ over $A(0)$ for $i=0,\ldots,2d-1$, and for
$i=0,\ldots,2d-1$, choose $y^{i-4d}_1,\ldots,y^{i-4d}_{m_i}$ in
$A^{i-4d}$ as above such that $\dd y^{i-4d}_1,\ab\ldots,\ab\dd
y^{i-4d}_{m_i}$ are the basis of $V^{i-4d}$ over $A(0)$ which is
dual to the basis $\dd x^{-i}_1,\ldots,\dd x^{-i}_{m_i}$ for
$V^{-i}$ under the isomorphism~$\om^0:(V^{i-4d})^*\ra V^{-i}$.

We have chosen the variables $x^{-i}_j,y^{i-4d}_j$ except in the
middle degree $-2d$. The isomorphism $\om^0:(V^{-2d})^*\ra V^{-2d}$
is antisymmetric, so we can choose
$x^{-2d}_1,\ldots,x^{-2d}_{m_{2d}},y^{-2d}_1,\ldots,y^{-2d}_{m_{2d}}
\!\in\! A^{-2d}$ with $\dd x^{-2d}_1,\ldots,\ab\dd
x^{-2d}_{m_{2d}},\ab\dd y^{-2d}_1,\ab\ldots,\ab\dd y^{-2d}_{m_{2d}}$
a standard symplectic basis of $V^{-2d}$ over $A(0)$. The rest of
the proof goes through with only cosmetic changes, following Example
\ref{da5ex3} rather than Example \ref{da5ex2}, to prove Theorem
\ref{da5thm1}(i) when $k$ is divisible by~4.

Next suppose that $k\equiv 2\mod 4$, so that $k=-4d-2$ for
$d=0,1,\ldots.$ Choose $x^0_1,\ldots, x^0_{m_0}\in A^0=A(0)$ and
$x^{-i}_1,\ldots,x^{-i}_{m_i}\in A^{-i}$ for $i=1,\ldots,2d$ as
above such that $\dd x^{-i}_1,\ldots,\dd x^{-i}_{m_i}$ form a basis
for $V^{-i}$ over $A(0)$ for $i=0,\ldots,2d$, and for
$i=0,\ldots,2d$, choose $y^{i-4d-2}_1,\ldots,y^{i-4d-2}_{m_i}$ in
$A^{i-4d-2}$ as above such that $\dd y^{i-4d-2}_1,\ab\ldots,\ab\dd
y^{i-4d-2}_{m_i}$ are the basis of $V^{i-4d-2}$ over $A(0)$ dual to
the basis $\dd x^{-i}_1,\ldots,\dd x^{-i}_{m_i}$ for $V^{-i}$ under
the isomorphism~$\om^0:(V^{i-4d})^*\ra V^{-i}$.

We have chosen the $x^{-i}_j,y^{i-4d-2}_j$ except in the middle
degree $-2d-1$. In this case the isomorphism $\om^0:(V^{-2d-1})^*\ra
V^{-2d-1}$ is symmetric, that is, $\om^0$ is a nondegenerate
quadratic form on $(V^{-2d-1})^*$. Now in general, nondegenerate
quadratic forms cannot be trivialized Zariski locally, but they can
at least be diagonalized. That is, in general we cannot choose a
basis of $V^{-2d-1}$ over $A(0)$ in which $\om^0$ is represented by
a constant matrix, but we can choose one in which it is represented
by a diagonal matrix.

Thus, we may choose $z_1^{-2d-1},\ldots,z_{m_{2d+1}}^{-2d-1}$ in
$A^{-2d-1}$ such that $\dd z_1^{-2d-1},\ab\ldots,\ab\dd
z_{m_{2d+1}}^{-2d-1}$ form a basis for $V^{-2d-1}$ over $A(0)$, and
there exist invertible elements $q_1,\ldots,q_{m_{2d+1}}$ in $A(0)$
such that $\om^0:(V^{-2d-1})^*\ra V^{-2d-1}$ has matrix $\mathop{\rm
diag}(q_1,\ldots,q_{m_{2d+1}})$ with respect to $\dd
z_1^{-2d-1},\ldots,\dd z_{m_{2d+1}}^{-2d-1}$ and its dual basis.
Then the standard form cdga $A$ is generated over $A(0)$ by the
variables \eq{da5eq20}, so as a commutative graded algebra $A$ is as
in Examples \ref{da5ex4} and \ref{da5ex5}. Also the analogue of
\eq{da5eq25} is
\begin{equation*}
\om^0\ot\id_{A(0)}=\sum_{i=0}^{2d}\sum_{j=1}^{m_i}\dd x^{-i}_j\,\dd
y^{i-4d-2}_j+\sum_{j=1}^{m_{2d+1}}q_j\,\dd z^{-2d-1}_j\,\dd z^{-2d-1}_j
\end{equation*}
in $(\La^2\Om^1_A)^{-4d-2}\ot_AA(0)$, which is a truncation of
\eq{da5eq25}. The rest of the proof then works as above, but with
extra terms in $z^{-2d-1}_j,q_j$ and $\smash{\frac{\pd q_j}{\pd
x^0_{j'}}}$ inserted as in Example \ref{da5ex5}. This proves
Theorem~\ref{da5thm1}(ii).
\smallskip

\noindent{\bf Step 4: proof of Theorem \ref{da5thm1}(iii).}

\noindent Finally, for Theorem \ref{da5thm1}(iii) we first apply part (ii) to
get $\check A,\check\om$ in weak Darboux form, as in Example
\ref{da5ex5}, with $\check A$ minimal at $\check p\in\Spec
H^0(\check A)$, and a Zariski open inclusion $\bs{\check
f}:\bSpec\check A\ra\bX$ with $\bs{\check f}(p)=x$. Then we have
invertible elements $q_1,\ldots,q_{m_{2d+1}}$ in $\check A(0)$.
Define $A(0)=\check A(0)[q_1^{1/2},\ldots,q_{m_{2d+1}}^{1/2}]$ to be
the $\K$-algebra obtained by adjoining square roots of
$q_1,\ldots,q_{m_{2d+1}}$ to $\check A(0)$. Then $A(0)$ is a smooth
$\K$-algebra with inclusion morphism $i_{A(0)}:\check A(0)\ra A(0)$,
such that $\Spec i_{A(0)}:\Spec A(0)\ra\Spec \check A(0)$ is an
\'etale cover of degree $2^{m_{2d+1}}$. Set $A=\check
A\ot_{\smash{\check A(0)}}A(0)$, with inclusion morphism $i_A:\check
A\ra A$. Then $\bSpec i_A:\bSpec A\ra\bSpec\check A$ is an \'etale
cover of derived $\K$-schemes of degree $2^{m_{2d+1}}$. Let
$p\in\bSpec A$ be one of the $2^{m_{2d+1}}$ preimages of $\check p$,
and set $\bs f=\bs{\check f}\ci\bSpec i_A$. Then $\bs f:\bSpec
A\ra\bX$ is \'etale, with $\bs f(p)=x$, as we want.

In $\check A$ we have variables $\check x{}^{-i}_j,\check
y{}^{i-4d-2}_j,\check z{}^{-2d-1}_j$ and a Hamiltonian $\check \Phi$.
Set
\begin{equation*}
x^{-i}_j\!=\!i_A(\check x{}^{-i}_j),\;
y{}^{i-4d-2}_j\!=\!i_A(\check y{}^{i-4d-2}_j),\;
z{}^{-2d-1}_j\!=\!q_j^{1/2}\cdot i_A(\check z{}^{-2d-1}_j),\;
\Phi\!=\!i_A(\check \Phi)
\end{equation*}
in $A$, for all $i,j$. As in Remark \ref{da5rem2}, changing
coordinates from $\check z{}^{-2d-1}_j$ to $z{}^{-2d-1}_j=
q_j^{1/2}\cdot\check z{}^{-2d-1}_j$ has the effect of setting
$q_j=1$, and transforms weak Darboux form in Example \ref{da5ex5} to
strong Darboux form in Example \ref{da5ex4}. One can check that
these $x^{-i}_j,y^{i-4d-2}_j,z^{-2d-1}_j,\Phi$ satisfy the conditions
of Example \ref{da5ex4}, and Theorem \ref{da5thm1}(iii) follows.

\subsection{Comparing Darboux form presentations on overlaps}
\label{da57}

Let $(\bX,\ti\om)$ be a $k$-shifted symplectic derived $\K$-scheme
for $k<0$. Then for $x\in\bX$, Theorem \ref{da5thm1} gives (Zariski
or \'etale local) presentations $\bs f:\bSpec A\ra\bX$ near $x$ with
$\bs f^*(\ti\om)\sim\om=(\om^0,0,\ldots)$ for $(A,\om)$ of (weak or
strong) Darboux form, so we have a Hamiltonian $\Phi\in A^{k+1}$, and
$\phi\in (\Om^1_A)^k$ with $\dd\Phi+\d\phi=0$ and $\dd\phi=k\om^0$ as in Proposition \ref{da5prop2}(a). In the case $k=-1$ we also suppose that $\Phi\vert_{\Spec H^0(A)^\red}=0$, as in Proposition \ref{da5prop2}(b). We think of $A,\ab\om,\ab\bs f,\ab\ab\Phi,\ab\phi$ as like coordinates on $\bX$ near $x$ which write $\bX,\ti\om$ in a nice way.

It is often important in geometric problems to compare different
choices of coordinates on the overlap of their domains. So suppose
$A,\om,\bs f,\Phi,\phi$ and $B,\check\om,\bs g,\check\Phi,\check\phi$ are two choices as above, and $p\in\Spec H^0(A)$, $q\in\Spec H^0(B)$ with $\bs f(p)=\bs g(q)=x$ in $\bX$. We would like to compare the presentations $A,\om,\bs f,\Phi,\phi$ and $B,\check\om,\bs g,\check\Phi,\check\phi$ for $\bX$ near~$x$.

Here is a general method for doing this:
\begin{itemize}
\setlength{\itemsep}{0pt}
\setlength{\parsep}{0pt}
\item[(i)] Apply Theorem \ref{da4thm2} to $\bs f:\bSpec A\ra\bX$ and
$\bs g:\bSpec B\ra\bX$ at $p\in\Spec H^0(A)$ and $q\in\Spec
H^0(B)$. This gives a standard form cdga $C$ over $\K$ minimal
at $r\in\Spec H^0(C)$, and cdga morphisms $\al:A\ra C$,
$\be:B\ra C$ with $\bSpec\al:r\mapsto p$, $\bSpec\be:r\mapsto
q$, with $\bs f\ci\bSpec\al\simeq \bs g\ci\bSpec\be$ as
morphisms $\bSpec C\ra\bX$ in~$\dSch_\K$.

Here $\al,\be$ are Zariski local inclusions if $\bs f,\bs g$ are
(if $A,\ldots,\phi$ and $B,\ldots,\check\phi$ come from Theorem
\ref{da5thm1}(i) or (ii)), and \'etale if $\bs f,\bs g$ are (if
$A,\ldots,\phi$ and $B,\ldots,\check\phi$ come from
Theorem~\ref{da5thm1}(iii)).
\item[(ii)] As $\al,\be$ are Zariski local inclusions or
\'etale, the pushforwards $\al_*(\om)$, $\be_*(\check\om)$ are
$k$-shifted symplectic structures on $\bSpec C$, which are
equivalent as
\begin{align*}
\al_*(\om)&\simeq(\bSpec\al)^*\ci\bs f^*(\ti\om)\simeq
(\bs f\ci\bSpec\al)^*(\ti\om)\\
&\simeq(\bs g\ci\bSpec\be)^*(\ti\om)\simeq(\bSpec\be)^*
\ci\bs g^*(\ti\om)\simeq\be_*(\check\om),
\end{align*}
using $\bs f^*(\ti\om)\sim\om$, $\bs g^*(\ti\om)\sim\check\om$
and $\bs f\ci\bSpec\al\simeq \bs g\ci\bSpec\be$. Also
$\al(\Phi),\al_*(\phi)$ and $\be(\check\Phi),\be_*(\check\phi)$
satisfy Proposition \ref{da5prop2}(a) for $C,\al_*(\om)$ and
$C,\be_*(\check\om)$ respectively, as $\Phi,\phi$ and
$\check\Phi,\phi$ do for $A,\om$ and $B,\check\om$.

Noting that $\al(\Phi)\vert_{\Spec H^0(C)^\red}=0=
\be(\check\Phi)\vert_{\Spec H^0(C)^\red}$ when $k=-1$,
Proposition \ref{da5prop2}(c) applies, yielding $\Psi\in C^k$
and $\psi\in(\Om^1_C)^{k-1}$ with
\e
\begin{aligned}
\al(\Phi)-\be(\check\Phi)&=\d\Psi &&\text{in $C^{k+1}$, and}\\
\al_*(\phi)-\al_*(\check\phi)&=\dd\Psi+\d\psi &&\text{in
$(\Om^1_C)^k.$}
\end{aligned}
\label{da5eq35}
\e
The data $C,\al,\be,\Psi,\psi$ compare the Darboux presentations
$A,\om,\bs f,\Phi,\phi$ and $B,\ab\check\om,\ab\bs g,\ab\check\Phi,\ab\check\phi$ for $\bX$ near $x$.
\end{itemize}

We work out this comparison more explicitly in the case~$k=-1$:

\begin{ex} 
\label{da5ex9}
Let $(\bX,\ti\om)$ be a $-1$-shifted symplectic derived
$\K$-scheme, and suppose $A,\om,\bs f,\Phi,\phi$ and
$B,\check\om,\bs g,\check\Phi,\check\phi$ and $p,q$ with
$\bs f(p)=\bs g(q)$ are as above. Follow (i)--(iii) above to get
$C,r,\al,\be,\Psi,\psi$ satisfying \eq{da5eq35}. Set $U=\Spec
A(0)$, $V=\Spec B(0)$ and $W=\Spec C(0)$, so that $U,V,W$ are smooth
$\K$-schemes with $p\in U$, $q\in V$, $r\in W$, and write
$a=\Spec\al(0):W\ra U$ and $b=\Spec\be(0):W\ra V$. Then $a(r)=p$
and~$b(r)=q$.

Since $A,\om,\Phi,\phi$ are as in Examples \ref{da5ex2} and
\ref{da5ex6}, we have \'etale coordinates
$(x_1^0,\ldots,x_{m_0}^0):U\ra\bA^{m_0}$ on $U$ such that
$A=A(0)[y_1^{-1},\ldots,y_{m_0}^{-1}]$ with $\d x^0_i=0$, $\d
y^{-1}_i=\frac{\pd \Phi}{\pd x^0_i}$ for all $i$, and
\e
\begin{split}
\om^0&=\dd x^0_1\,\dd y^{-1}_1+\cdots+\dd x^0_{m_0}\,\dd y^{-1}_{m_0},\\
\phi&=-y^{-1}_1\,\dd x^0_1-\cdots-y^{-1}_{m_0}\,\dd x^0_{m_0}.
\end{split}
\label{da5eq36}
\e
Similarly, for $B,\check\om,\check\Phi,\check\phi$ we have
\'etale coordinates $(\check x_1^0,\ldots,\check x_{\check
m_0}^0):V\ra\bA^{\check m_0}$ on $V$ such that $B=B(0)[\check
y_1^{-1},\ldots,\check y_{\check m_0}^{-1}]$ with $\d\check
x^0_i=0$, $\d\check y^{-1}_i=\frac{\pd\check \Phi}{\pd\check x^0_i}$
for all $i$, and $\check\om^0,\check\phi$ are given by the analogue of~\eq{da5eq36}.

Localizing $C$ at $r$ if necessary, choose \'etale coordinates
$(z_1^0,\ldots,z_m^0):W\ra\bA^m$, for $m=\dim U$. Since $C$ is
minimal at $r$ and $\bSpec C$ is $-1$-shifted symplectic, we may
choose $e_1^{-1},\ldots,e_m^{-1}$ in $C^{-1}$ such that $\dd
e_1^{-1},\ldots,\ab\dd e_m^{-1}$ form a basis for $V^{-1}$ over
$C(0)$ in Proposition \ref{da2prop}, for the same $m=\dim U$, and
then as a commutative graded algebra $C=C(0)[e_1^{-1},
\ldots,e_m^{-1}]$ is freely generated over $C(0)$ by degree $-1$
variables $e_1^{-1},\ldots,e_m^{-1}$.

For all $i=1,\ldots,m_0$, $\check\imath=1,\ldots,\check m_0$ and
$j,j',j''=1,\ldots,m$, define functions $I_j,J_{ij},K_{\check\imath
j},L_j,M_{jj'},N_{jj'j''}$ in $C(0)$ with $N_{jj'j''}=-N_{j'jj''}$
by
\begin{gather*}
\d e_j^{-1}=I_j,\; a^*(y_i^{-1})=\sum_{j=1}^m J_{ij}e_j^{-1},\;
b^*(\check y_{\check\imath}^{-1})=\sum_{j=1}^m K_{\check\imath
j}e_j^{-1}, \; \Psi=\sum_{j=1}^mL_je_j^{-1},
\nonumber\\
\text{and}\quad\psi=\sum_{j,j'=1}^m M_{jj'}e_j^{-1}\dd
e_{j'}^{-1}+\sum_{j,j',j''=1}^m N_{jj'j''}e_j^{-1}e_{j'}^{-1}\dd
z_{j''}^0.
\end{gather*}
Then $H^0(C)=C(0)/\d C^{-1}=C(0)/(I_1,\ldots,I_m)$, for
$(I_1,\ldots,I_m)\subset C(0)$ the ideal generated by
$I_1,\ldots,I_m$. Since $\al:A\ra C$ induces $\al:H^0(A)\ra H^0(C)$
and both are Zariski open inclusions, with $H^0(A)=A(0)/(\dd \Phi)$ for
$(\dd \Phi)\subset A(0)$ the ideal generated by $\dd \Phi$, and similarly
for $\be$, we have
\e
(I_1,\ldots,I_m)=\bigl(a^*(\dd \Phi)\bigr)=\bigl(b^*(\dd\check
\Phi)\bigr).
\label{da5eq37}
\e

Equation \eq{da5eq35} becomes the equations
\begin{gather}
a^*(\Phi)-b^*(\check\Phi)=\sum_{j=1}^mI_j\,L_j,\quad
0=L_j+\sum_{j'=1}^mI_{j'}M_{j'j},
\label{da5eq38}\\
\sum_{i=1}^{m_0}J_{ij}\frac{\pd\al(x_i^0)}{\pd z_{j'}^0}-\!
\sum_{\check\imath=1}^{\check m_0}K_{\check\imath j}
\frac{\pd\be(\check x_{\check\imath}^0)}{\pd z_{j'}^0}=\!-\frac{\pd
L_j}{\pd z_{j'}^0}+\!\sum_{j''=1}^mM_{jj''}\frac{\pd I_{j''}}{\pd
z_{j'}^0}+\!2\sum_{j''=1}^mI_{j''}N_{j''jj'}, \nonumber
\end{gather}
where the second and third equations are the coefficients of $\dd
e_j^{-1},e_j^{-1}\dd z_{j'}^0$  in the second equation
of~\eq{da5eq35}.

The first two equations of \eq{da5eq38} imply that
\e
a^*(\Phi)\!-\!b^*(\check \Phi)\!\in\!(I_1,\ldots,I_m)^2\!=\!\bigl(a^*(\dd\Phi)\bigr){}^2\!=\!\bigl(b^*(\dd\check \Phi)\bigr){}^2\!\subset\! C(0),
\label{da5eq39}
\e
by \eq{da5eq37}. This will be important in the proof of Theorem
\ref{da6thm4} in~\S\ref{da63}.
\end{ex}

\section[\texorpdfstring{$-1$-shifted symplectic derived schemes and d-critical
loci}{-1-shifted symplectic derived schemes and d-critical
loci}]{$-1$-shifted symplectic derived schemes \\ and d-critical
loci}
\label{da6}

In \S\ref{da61} we introduce {\it d-critical loci\/} from Joyce
\cite{Joyc}. Our second main result Theorem \ref{da6thm4} is stated
and discussed in \S\ref{da62}, and proved in~\S\ref{da63}.

\subsection{Background material on d-critical loci}
\label{da61}

Here are some of the main definitions and results on d-critical
loci, from Joyce \cite[Th.s 2.1, 2.20, 2.28 \& Def.s 2.5, 2.18,
2.31]{Joyc}. In fact \cite{Joyc} develops two versions of the
theory, {\it algebraic d-critical loci\/} on $\K$-schemes and {\it
complex analytic d-critical loci\/} on complex analytic spaces, but
we discuss only the former.

\begin{thm} 
\label{da6thm1}
Let\/ $X$ be a $\K$-scheme. Then there exists a sheaf\/
$\cS_X$ of\/ $\K$-vector spaces on $X,$ unique up to canonical
isomorphism, which is uniquely characterized by the following two
properties:
\begin{itemize}
\setlength{\itemsep}{0pt}
\setlength{\parsep}{0pt}
\item[{\bf(i)}] Suppose $R\subseteq X$ is Zariski open, $U$ is a
smooth\/ $\K$-scheme, and\/ $i:R\hookra U$ is a closed
embedding. Then we have an exact sequence of sheaves of\/
$\K$-vector spaces on $R\!:$
\begin{equation*}
\smash{\xymatrix@C=30pt{ 0 \ar[r] & I_{R,U} \ar[r] &
i^{-1}(\O_U) \ar[r]^{i^\sharp} & \O_X\vert_R \ar[r] & 0, }}
\end{equation*}
where $\O_X,\O_U$ are the sheaves of regular functions on $X,U,$
and\/ $i^\sharp$ is the morphism of sheaves of\/ $\K$-algebras
on $R$ induced by $i$.

There is an exact sequence of sheaves of\/ $\K$-vector spaces on
$R\!:$
\begin{equation*}
\xymatrix@C=20pt{ 0 \ar[r] & \cS_X\vert_R
\ar[rr]^(0.4){\io_{R,U}} &&
\displaystyle\frac{i^{-1}(\O_U)}{I_{R,U}^2} \ar[rr]^(0.4)\d &&
\displaystyle\frac{i^{-1}(T^*U)}{I_{R,U}\cdot i^{-1}(T^*U)}\,, }
\end{equation*}
where $\d$ maps $f+I_{R,U}^2\mapsto \d f+I_{R,U}\cdot
i^{-1}(T^*U)$.
\item[{\bf(ii)}] Let\/ $R\subseteq S\subseteq X$ be Zariski open,
$U,V$ be smooth\/ $\K$-schemes, $i:R\hookra U,$ $j:S\hookra V$
closed embeddings, and\/ $\Psi:U\ra V$ a morphism with\/
$\Psi\ci i=j\vert_R:R\ra V$. Then the following diagram of
sheaves on $R$ commutes:
\e
\begin{gathered}
\xymatrix@C=12pt{ 0 \ar[r] & \cS_X\vert_R \ar[d]^\id
\ar[rrr]^(0.4){\io_{S,V}\vert_R} &&&
\displaystyle\frac{j^{-1}(\O_V)}{I_{S,V}^2}\Big\vert_R
\ar@<-2ex>[d]^{i^{-1}(\Psi^\sharp)} \ar[rr]^(0.4)\d &&
\displaystyle\frac{j^{-1}(T^*V)}{I_{S,V}\cdot
j^{-1}(T^*V)}\Big\vert_R \ar@<-2ex>[d]^{i^{-1}(\d\Psi)} \\
 0 \ar[r] & \cS_X\vert_R \ar[rrr]^(0.4){\io_{R,U}} &&&
\displaystyle\frac{i^{-1}(\O_U)}{I_{R,U}^2} \ar[rr]^(0.4)\d &&
\displaystyle\frac{i^{-1}(T^*U)}{I_{R,U}\cdot i^{-1}(T^*U)}\,.
}\!\!\!\!\!\!\!{}
\end{gathered}
\label{da6eq1}
\e
Here $\Psi:U\ra V$ induces $\Psi^\sharp: \Psi^{-1}(\O_V)\ra\O_U$
on $U,$ so we have
\e
i^{-1}(\Psi^\sharp):j^{-1}(\O_V)\vert_R=i^{-1}\ci
\Psi^{-1}(\O_V)\longra i^{-1}(\O_U),
\label{da6eq2}
\e
a morphism of sheaves of\/ $\K$-algebras on $R$. As $\Psi\ci
i=j\vert_R,$ equation \eq{da6eq2} maps $I_{S,V}\vert_R\ra
I_{R,U},$ and so maps $I_{S,V}^2\vert_R\ra I_{R,U}^2$. Thus
\eq{da6eq2} induces the morphism in the second column of\/
\eq{da6eq1}. Similarly, $\d\Psi:\Psi^{-1}(T^*V)\ra T^*U$ induces
the third column of\/~\eq{da6eq1}.
\end{itemize}

There is a natural decomposition\/ $\cS_X=\cSz_X\op\K_X,$ where\/
$\K_X$ is the constant sheaf on $X$ with fibre $\K,$ and\/
$\cSz_X\subset\cS_X$ is the kernel of the composition
\begin{equation*}
\xymatrix@C=40pt{ \cS_X \ar[r] & \O_X
\ar[r]^(0.47){i_X^\sharp} & \O_{X^\red}, }
\end{equation*}
with\/ $X^\red$ the reduced\/ $\K$-subscheme of\/ $X,$ and\/
$i_X:X^\red\hookra X$ the inclusion.
\end{thm}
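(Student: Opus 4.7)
The plan is to construct $\cS_X$ by gluing local models attached to closed embeddings of opens of $X$ into smooth $\K$-schemes. Since $X$ is locally of finite type, every point admits a Zariski open neighbourhood $R\subseteq X$ with a closed embedding $i:R\hookra U$ into a smooth affine $\K$-scheme $U$. For each such $(R,i,U)$, I would define a sheaf on $R$ by
$$\cS_{R,U} := \Ker\Bigl(\d: i^{-1}(\O_U)/I_{R,U}^2 \ra i^{-1}(T^*U)/\bigl(I_{R,U}\cdot i^{-1}(T^*U)\bigr)\Bigr),$$
which is the definition forced by property (i). If $\Phi:U\ra V$ is a morphism of smooth schemes with $\Phi\ci i=j\vert_R$ for a second embedding $j:S\hookra V$ and $R\subseteq S$, then $i^{-1}(\Phi^\sharp)$ sends $I_{S,V}\vert_R$ into $I_{R,U}$ (hence $I_{S,V}^2\vert_R$ into $I_{R,U}^2$) and commutes with $\d$ via $\d\Phi$; this yields a natural morphism $\Phi_*:\cS_{S,V}\vert_R\ra\cS_{R,U}$ realizing the diagram \eq{da6eq1}.

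The crux of the argument, and the main obstacle, is a \emph{stabilization lemma}: if $V=U\t W$ for a smooth $\K$-scheme $W$ with a $\K$-point $w_0$, $j=(i,w_0)$, and $\Phi=\pi_U$, then $\Phi_*:\cS_{R,U\t W}\vert_R\ra\cS_{R,U}$ is an isomorphism. I would prove this using local étale coordinates $y_1,\ldots,y_n$ on $W$ vanishing at $w_0$: Taylor-expanding a representative $f$ in the $y_i$, the condition $\d f\in I_{S,V}\cdot j^{-1}(T^*V)$ forces the coefficient of each $y_i$ to lie in $I_{R,U}$, so modulo $I_{S,V}^2$ only the $y$-independent part of $f$ survives, and this part defines an element of $\cS_{R,U}$ furnishing the inverse to $\Phi_*$. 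This is the only non-formal step in the proof; once established, all the comparisons fall out mechanically.

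Given the lemma, the rest is formal. For any two embeddings $i:R\hookra U$ and $j:R\hookra V$ of the same open $R$, the diagonal $(i,j):R\hookra U\t V$ together with the lemma applied to each of the two projections yields a canonical isomorphism $\Theta_{i,j}:\cS_{R,U}\cong\cS_{R,V}$, and a cocycle check on triple products $U\t V\t W$ gives $\Theta_{j,k}\ci\Theta_{i,j}=\Theta_{i,k}$. The local sheaves $\cS_{R,U}$ then descend to a global sheaf $\cS_X$ on $X$ automatically satisfying (i); property (ii) holds by construction of $\Theta_{i,j}$ via \eq{da6eq1}, since any $\Phi$ as in (ii) factors through the diagonal up to one of the projections. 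Uniqueness of $\cS_X$ up to canonical isomorphism is immediate from (i), which pins it down on any local chart.

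For the decomposition, I would observe that for any local section $f+I_{R,U}^2$ of $\cS_X$ the condition $\d f\in I_{R,U}\cdot i^{-1}(T^*U)$ yields $\d(f\vert_{R^\red})=0$ in $\Om_{R^\red}$; since $R^\red$ is reduced this forces $f\vert_{R^\red}$ to be locally constant on $R^\red$. Hence the composition $\cS_X\ra\O_X\ra\O_{X^\red}$ factors through a surjection $\pi:\cS_X\twoheadrightarrow\K_X$. Setting $\cSz_X:=\Ker\pi$ and noting that $\pi$ admits a natural section sending a locally constant function $c$ to the class of the constant function $c$ on $U$ (which lies in $\cS_{R,U}$ since $\d c=0$), the splitting $\cS_X=\cSz_X\op\K_X$ follows at once.
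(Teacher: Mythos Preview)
The paper does not prove this theorem: it is quoted as background from Joyce \cite[Th.~2.1]{Joyc}, so there is no ``paper's own proof'' to compare against. Your sketch is essentially the standard construction one expects to find in \cite{Joyc}, and the overall architecture (define the local kernel sheaf, prove independence of the embedding, glue, then split off constants) is correct.

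That said, there is a genuine gap in your gluing step. Your stabilization lemma is stated only for $j=(i,w_0)$ with $w_0$ a \emph{fixed} $\K$-point of $W$, i.e.\ for the section $\Phi=(\id_U,w_0):U\hookra U\t W$ (not the projection $\pi_U$, which goes the wrong way for your $\Phi_*$). But when you then compare two arbitrary embeddings $i:R\hookra U$ and $j:R\hookra V$ via the diagonal $(i,j):R\hookra U\t V$, neither projection puts you in this situation: the second component $j$ is not constant, so there is no single $w_0$ and the lemma as written does not apply. To repair this you must, Zariski-locally on $R$, extend $j$ to a morphism $\tilde\jmath:U'\to V$ with $\tilde\jmath\ci i=j$ on some open $U'\subseteq U$ (possible since $V$ is smooth and $R\hookra U'$ is closed in the affine case), use $(\id_{U'},\tilde\jmath):U'\hookra U'\t V$ to reduce to your product lemma in \'etale coordinates normal to the graph, and then check that the resulting isomorphism $\cS_{R,U}\cong\cS_{R,U\t V}$ is independent of the choice of extension $\tilde\jmath$. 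This last independence check is not automatic and is where most of the work lies; without it the cocycle identity on triple overlaps has no content. Once this is done, your argument for the decomposition $\cS_X=\cSz_X\oplus\K_X$ is fine.
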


\begin{dfn} 
\label{da6def1}
An {\it algebraic d-critical locus\/} over a
field $\K$ is a pair $(X,s)$, where $X$ is a $\K$-scheme and $s\in
H^0(\cSz_X)$ for $\cSz_X$ as in Theorem \ref{da6thm1}, such that for
each $x\in X$, there exists a Zariski open neighbourhood $R$ of $x$
in $X$, a smooth $\K$-scheme $U$, a regular function
$f:U\ra\bA^1=\K$, and a closed embedding $i:R\hookra U$, such that
$i(R)=\Crit(f)$ as $\K$-subschemes of $U$, and
$\io_{R,U}(s\vert_R)=i^{-1}(f)+I_{R,U}^2$. We call the quadruple
$(R,U,f,i)$ a {\it critical chart\/} on~$(X,s)$.

Let $(X,s)$ be an algebraic d-critical locus, and $(R,U,f,i)$ a
critical chart on $(X,s)$. Let $U'\subseteq U$ be Zariski open, and
set $R'=i^{-1}(U')\subseteq R$, $i'=i\vert_{R'}:R'\hookra U'$, and
$f'=f\vert_{U'}$. Then $(R',U',f',i')$ is a critical chart on
$(X,s)$, and we call it a {\it subchart\/} of $(R,U,f,i)$. As a
shorthand we write~$(R',U',f',i')\subseteq (R,U,f,i)$.

Let $(R,U,f,i),(S,V,g,j)$ be critical charts on $(X,s)$, with
$R\subseteq S\subseteq X$. An {\it embedding\/} of $(R,U,f,i)$ in
$(S,V,g,j)$ is a locally closed embedding $\Psi:U\hookra V$ such
that $\Psi\ci i=j\vert_R$ and $f=g\ci\Psi$. As a shorthand we write
$\Psi: (R,U,f,i)\hookra(S,V,g,j)$. If $\Psi:(R,U,f,i)\hookra
(S,V,g,j)$ and $\Xi:(S,V,g,j)\hookra(T,W,h,k)$ are embeddings, then
$\Xi\ci\Psi:(R,U,i,e)\hookra(T,W,h,k)$ is also an embedding.
\end{dfn}

\begin{thm} 
\label{da6thm2}
Let\/ $(X,s)$ be an algebraic d-critical locus, and\/
$(R,U,f,i),\ab(S,\ab V,\ab g,\ab j)$ be critical charts on $(X,s)$.
Then for each\/ $x\in R\cap S\subseteq X$ there exist subcharts
$(R',U',f',i')\subseteq(R,U,f,i),$ $(S',V',g',j')\subseteq
(S,V,g,j)$ with\/ $x\in R'\cap S'\subseteq X,$ a critical chart\/
$(T,W,h,k)$ on $(X,s),$ and embeddings $\Psi:(R',U',f',i')\hookra
(T,W,h,k),$ $\Xi:(S',V',g',j')\hookra(T,W,h,k)$.
\end{thm}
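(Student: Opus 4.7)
The plan is to construct the common chart $(T,W,h,k)$ together with the embeddings $\Phi,\Psi$ by a product-and-correction construction, after suitable shrinking of the original charts. I would proceed in three main steps: localization, explicit construction, and verification of the critical chart and d-critical compatibility conditions.

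First I would localize: by passing to subcharts, replace $R$ and $S$ by a common Zariski open neighborhood $T\subseteq R\cap S$ containing $x$, and replace $U,V$ by suitable Zariski open subschemes $U'\subseteq U$, $V'\subseteq V$ so that $(T,U',f\vert_{U'},i\vert_T)\subseteq (R,U,f,i)$ and $(T,V',g\vert_{V'},j\vert_T)\subseteq(S,V,g,j)$ are subcharts. After this reduction I only need to produce a critical chart $(T,W,h,k)$ over the same open $T\subseteq X$ together with embeddings of the two subcharts into it.

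The core construction proceeds as follows. Take $W\subseteq U'\times V'$ to be a Zariski open neighborhood of $(i(x),j(x))$; ultimately $W$ may need to be cut down to a smooth subscheme whose equations are tied to the d-critical data. Define $k\colon T\to W$ by $k=(i\vert_T,j\vert_T)$, which is a closed embedding of $T$ into the smooth $W$. To construct $\Phi:U'\hookrightarrow W$, first extend the comparison map $j\vert_T\circ(i\vert_T)^{-1}\colon i(T)\to V'$ to a morphism $\tilde\phi\colon U'\to V'$ defined on a Zariski neighborhood of $i(x)$; such an extension exists because $i(T)$ is a closed subscheme of the smooth affine $U'$ and $V'$ is smooth, so the formal smoothness lifting property produces the extension after possibly shrinking $U'$. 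Then $\Phi(u):=(u,\tilde\phi(u))$ is the graph embedding of $U'$ into $W$. Symmetrically, an extension $\tilde\psi\colon V'\to U'$ of $i\vert_T\circ(j\vert_T)^{-1}$ gives $\Psi(v):=(\tilde\psi(v),v)$, and by construction $\Phi\circ i\vert_T=\Psi\circ j\vert_T=k$. The function $h\colon W\to\bA^1$ must satisfy $h\circ\Phi=f\vert_{U'}$ and $h\circ\Psi=g\vert_{V'}$; the naive choice $h(u,v)=f(u)+g(v)$ fails because $\mathrm{Crit}(h)=i(T)\times j(T)$ is strictly larger than $k(T)$. Instead, I would build $h$ as $f(u)+g(v)$ minus a correction term of order two along both $\Phi(U')$ and $\Psi(V')$, where the correction is selected using the common d-critical data $s\vert_T$ (which determines $f,g$ modulo the squares of their respective defining ideals), exploiting diagram \eqref{da6eq1} of Theorem \ref{da6thm1} to guarantee compatibility.

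Finally I would verify that $(T,W,h,k)$ is a critical chart on $(X,s)$: the scheme-theoretic identity $k(T)=\mathrm{Crit}(h)$ follows at points of $k(T)$ from the criticality of both $f$ and $g$ in their respective directions, and off $k(T)$ from non-degeneracy of the normal bundles to $\Phi(U')$ and $\Psi(V')$ inside $W$ together with the controlled order-two behaviour of the correction; the compatibility $\iota_{T,W}(s\vert_T)=k^{-1}(h)+I_{T,W}^2$ is then forced by applying \eqref{da6eq1} to each of $\Phi$ and $\Psi$, using that $(R,U,f,i)$ and $(S,V,g,j)$ are critical charts for the same $s$. The main obstacle is precisely the simultaneous realization of $h\circ\Phi=f\vert_{U'}$, $h\circ\Psi=g\vert_{V'}$, and $\mathrm{Crit}(h)=k(T)$ scheme-theoretically: this may require either a delicate choice of correction term or a further shrinking of $W$ to a smooth subscheme, and it is the one step in which the full force of the characterization of $\cSz_X$ by Theorem \ref{da6thm1} is genuinely used.
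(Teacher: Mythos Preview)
This theorem is not proved in the paper you are reading: it appears in \S\ref{da61} as background material quoted from Joyce~\cite{Joyc} (it is \cite[Th.~2.20]{Joyc}), so there is no proof in the present paper to compare your proposal against.

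On the substance of your sketch: the architecture---pass to a product $U'\times V'$, embed the two charts as graphs of mutually extending maps $\tilde\phi,\tilde\psi$, and then cook up $h$---is a reasonable heuristic, but the step you yourself flag as ``the main obstacle'' is a genuine gap, not a routine verification. Two concrete problems. First, the graph subvarieties $\Phi(U')$ and $\Psi(V')$ need not meet cleanly: their intersection is the fixed locus of $\tilde\psi\circ\tilde\phi$, which restricts to the identity on $i(T)$ but whose differential at $i(x)$ in the normal directions depends on the arbitrary choices of extensions $\tilde\phi,\tilde\psi$, so the fixed locus may be strictly larger than $i(T)$ and even non-reduced. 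Second, the two boundary conditions $h\circ\Phi=f$ and $h\circ\Psi=g$ only determine $h$ along $\Phi(U')\cup\Psi(V')$, and on the overlap they need not literally agree---the d-critical compatibility from Theorem~\ref{da6thm1} only gives agreement modulo $I_{T,W}^2$. Extending $h$ off this union while forcing $\Crit(h)=k(T)$ scheme-theoretically, with the correct nilpotent structure, is exactly the hard content of the theorem; a vague ``correction term of order two'' does not do it.

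The proof in \cite{Joyc} proceeds differently: rather than trying to embed both charts directly into a product, it first establishes a stabilization/normal-form result (roughly, that any embedding of critical charts looks locally like adding a nondegenerate quadratic form in the normal directions), and then uses this to build the common $(T,W,h,k)$ by thickening. That route avoids having to solve the simultaneous extension problem you set yourself.
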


\begin{thm} 
\label{da6thm3}
Let\/ $(X,s)$ be an algebraic d-critical locus, and\/
$X^\red\subseteq X$ the associated reduced\/ $\K$-scheme. Then there
exists a line bundle $K_{X,s}$ on $X^\red$ which we call the
\begin{bfseries}canonical bundle\end{bfseries} of\/ $(X,s),$ which
is natural up to canonical isomorphism, and is characterized by the
following properties:
\begin{itemize}
\setlength{\itemsep}{0pt}
\setlength{\parsep}{0pt}
\item[{\bf(i)}] If\/ $(R,U,f,i)$ is a critical chart on
$(X,s),$ there is a natural isomorphism
\begin{equation*}
\io_{R,U,f,i}:K_{X,s}\vert_{R^\red}\longra
i^*\bigl(K_U^{\ot^2}\bigr)\vert_{R^\red},
\end{equation*}
where $K_U=\La^{\dim U}T^*U$ is the canonical bundle of\/ $U$ in
the usual sense.
\item[{\bf(ii)}] Let\/ $\Psi:(R,U,f,i)\hookra(S,V,g,j)$ be an
embedding of critical charts on $(X,s)$. Then
{\rm\cite[Def.~2.26]{Joyc}} defines an isomorphism of line
bundles
\begin{equation*}
J_\Psi:i^*\bigl(K_U^{\ot^2}\bigr)\big\vert_{R^\red}\,{\buildrel
\cong\over\longra}\,j^*\bigl(K_V^{\ot^2}\bigr)\big\vert_{R^\red}
\end{equation*}
on $R^\red,$ and we must have
\begin{equation*}
\io_{S,V,g,j}\vert_{R^\red}=J_\Psi\ci
\io_{R,U,f,i}:K_{X,s}\vert_{R^\red}\longra j^*
\bigl(K_V^{\ot^2}\bigr)\big\vert_{R^\red}.
\end{equation*}
\end{itemize}
\end{thm}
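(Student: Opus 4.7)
The plan is to construct $K_{X,s}$ as a line bundle on $X^\red$ by gluing local models indexed by critical charts, with the isomorphisms $J_\Phi$ of (ii) serving as transition data. To each critical chart $(R,U,f,i)$ on $(X,s)$ I associate the local line bundle $L_{R,U,f,i} := i^*(K_U^{\ot 2})\vert_{R^\red}$ on $R^\red$. Properties (i) and (ii) dictate precisely what $K_{X,s}$ must look like on each chart and how these local models are to be identified on overlaps, so once existence is established, uniqueness up to canonical isomorphism is automatic.

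For the transition data, given two critical charts $(R_\alpha,U_\alpha,f_\alpha,i_\alpha)$ for $\alpha=1,2$ and a point $x \in R_1\cap R_2$, Theorem \ref{da6thm2} produces subcharts containing $x$ together with embeddings $\Phi_\alpha:(R'_\alpha,U'_\alpha,f'_\alpha,i'_\alpha)\hookra(T,W,h,k)$ into a common critical chart. The associated isomorphisms then give a candidate transition
\begin{equation*}
\tau_{12} := J_{\Phi_2}^{-1} \ci J_{\Phi_1}: L_{R_1,U_1,f_1,i_1}\bigl\vert_{(R'_1\cap R'_2)^\red}\longra L_{R_2,U_2,f_2,i_2}\bigl\vert_{(R'_1\cap R'_2)^\red}
\end{equation*}
on a neighbourhood of $x$ in the overlap.

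The main obstacle is to verify that this assignment is coherent, that is, that $\tau_{12}$ is independent of the auxiliary chart $(T,W,h,k)$ and the embeddings $\Phi_\alpha$, and that the resulting transition data satisfy the cocycle condition on triple overlaps. This reduces to two functoriality-type properties of the $J_\Phi$ of \cite[Def.~2.26]{Joyc}: first, multiplicativity $J_{\Psi\ci\Phi} = J_\Psi \ci J_\Phi$ for composable embeddings; and second, that $J_\Phi$ depends only on the first-order behaviour of $\Phi$ along $i(R)^\red$. Both properties stem from the construction of $J_\Phi$, which uses the short exact sequence $0\to TU\vert_{\Crit(f)}\to \Phi^*(TV)\vert_{\Crit(f)}\to N_{U/V}\vert_{\Crit(f)}\to 0$ together with the nondegenerate quadratic form on $N_{U/V}\vert_{\Crit(f)^\red}$ induced by the normal Hessian of $g$ to produce a canonical trivialization of $\det(N_{U/V})^{\ot 2}\vert_{\Crit(f)^\red}$, and hence the identification $K_V^{\ot 2}\vert_{\Crit(f)^\red}\cong K_U^{\ot 2}\vert_{\Crit(f)^\red}$. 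Multiplicativity of determinants under composition gives the first property, while the Hessian along $\Crit(f)^\red$ visibly depends only on the first-order part of $\Phi$, giving the second.

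Granted these coherence properties, independence of $(T,W,h,k)$ and the triple cocycle condition follow by taking further common refinements via Theorem \ref{da6thm2}, using multiplicativity to reduce to a comparison of two embeddings between the same pair of charts, then invoking first-order independence to identify them. Standard Zariski descent for line bundles then produces $K_{X,s}$ on $X^\red$, with (i) built in as the tautological trivialization $\io_{R,U,f,i}: K_{X,s}\vert_{R^\red} \to L_{R,U,f,i}$, and (ii) recovered by unpacking the definition of $\tau_{12}$.
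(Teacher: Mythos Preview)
This theorem is not proved in the present paper: it is quoted as background from \cite[Th.~2.28]{Joyc}, so there is no ``paper's own proof'' to compare against. That said, your outline is exactly the strategy used in \cite{Joyc}: define $K_{X,s}$ by gluing the local models $i^*(K_U^{\ot 2})\vert_{R^\red}$ over critical charts, using Theorem~\ref{da6thm2} to produce transition maps $J_{\Phi_2}^{-1}\ci J_{\Phi_1}$, and then verify coherence via multiplicativity $J_{\Psi\ci\Phi}=J_\Psi\ci J_\Phi$ together with independence of $J_\Phi$ from the particular embedding.

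There is, however, a genuine gap in your coherence step. You claim that once one has reduced to comparing two embeddings $\Phi,\Phi':(R,U,f,i)\hookra(S,V,g,j)$ between the \emph{same} pair of charts, the equality $J_\Phi=J_{\Phi'}$ follows because ``$J_\Phi$ depends only on the first-order behaviour of $\Phi$ along $i(R)^\red$''. But this does not suffice: two embeddings with $\Phi\ci i=\Phi'\ci i=j\vert_R$ can have different differentials $\d\Phi,\d\Phi'$ along $i(R)$, hence different normal bundles $N_\Phi,N_{\Phi'}\subset j^*(TV)\vert_R$, and there is no a priori reason the two Hessian trivialisations of $\det(N_\Phi)^{\ot 2}$ and $\det(N_{\Phi'})^{\ot 2}$ should be compatible under the identification coming from $K_V^{\ot 2}\cong K_U^{\ot 2}\ot\det(N)^{-\ot 2}$. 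In \cite{Joyc} this independence is a separate, nontrivial result (proved by an explicit local computation using \'etale coordinates in which one embedding is standard and analysing how the Hessian form transforms), not a consequence of the determinant/Hessian construction alone. Your sketch would need either to cite that result or to supply the local calculation.
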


\begin{dfn} 
\label{da6def2}
Let $(X,s)$ be an algebraic d-critical locus, and
$K_{X,s}$ its canonical bundle from Theorem \ref{da6thm3}. An {\it
orientation\/} on $(X,s)$ is a choice of square root line bundle
$K_{X,s}^{1/2}$ for $K_{X,s}$ on $X^\red$. That is, an orientation
is a line bundle $L$ on $X^\red$, together with an isomorphism
$L^{\ot^2}=L\ot L\cong K_{X,s}$. A d-critical locus with an
orientation will be called an {\it oriented d-critical locus}.
\end{dfn}

\subsection{The second main result, and applications}
\label{da62}

Here is the second main result of this paper, which will be proved
in~\S\ref{da63}:

\begin{thm} 
\label{da6thm4}
Suppose\/ $(\bX,\ti\om)$ is a $-1$-shifted symplectic
derived\/ $\K$-scheme, and let\/ $X=t_0(\bX)$ be the associated
classical\/ $\K$-scheme of\/ ${\bX}$. Then $X$ extends uniquely to
an algebraic d-critical locus\/ $(X,s),$ with the property that
whenever\/ $(\bSpec A,\om)$ is a  $-1$-shifted symplectic derived\/
$\K$-scheme in Darboux form with Hamiltonian $\Phi\in A(0),$ as in
Examples\/ {\rm\ref{da5ex2}} and\/ {\rm\ref{da5ex6},} and\/ $\bs
f:\bSpec A\ra\bX$ is an equivalence in $\dSch_\K$ with a Zariski
open derived\/ $\K$-subscheme\/ $\bR\subseteq\bX$ with\/ $\bs
f^*(\ti\om)\sim\om,$ writing\/ $U=\Spec A(0),$ $R=t_0(\bR),$
$f=t_0(\bs f)$ so that\/ $\Phi:U\ra\bA^1$ is regular and\/
$f:\Crit(\Phi)\ra R$ is an isomorphism, for $\Crit(\Phi)\subseteq U$ the
classical critical locus of\/ $\Phi,$ then $(R,U,\Phi,f^{-1})$ is a
critical chart on\/~$(X,s)$.

The canonical bundle $K_{X,s}$ from Theorem\/ {\rm\ref{da6thm3}} is
naturally isomorphic to the determinant line bundle
$\det(\bL_{\bX})\vert_{X^\red}$ of the cotangent complex\/
$\bL_{\bX}$ of\/~$\bX$.
\end{thm}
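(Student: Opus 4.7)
The plan is to construct $s$ by covering $\bX$ with Darboux charts from Theorem \ref{da5thm1}, reading off a local critical chart from each, gluing using the overlap analysis of \S\ref{da58}, and then identifying the canonical bundle via a direct determinant calculation on the cotangent complex in Darboux coordinates.

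First I would construct $s$ locally. For each $x\in X$, Theorem \ref{da5thm1}(i) (applied with $k=-1$) combined with Proposition \ref{da5prop2}(b) yields a Zariski open inclusion $\bs f_\alpha:\bSpec A_\alpha\ra\bX$ with image $\bR_\alpha\ni x$, where $(A_\alpha,\om_\alpha)$ is in Darboux form with Hamiltonian $H_\alpha\in A_\alpha(0)$ satisfying $H_\alpha\vert_{R_\alpha^\red}=0$, and $\bs f_\alpha^*(\ti\om)\sim\om_\alpha$. Writing $U_\alpha=\Spec A_\alpha(0)$, $R_\alpha=t_0(\bR_\alpha)$, and $i_\alpha=f_\alpha^{-1}:R_\alpha\hookra U_\alpha$, Example \ref{da5ex6} gives $R_\alpha\cong \Crit(H_\alpha)$. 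Since $\d H_\alpha\in I_{R_\alpha,U_\alpha}\cdot i_\alpha^{-1}(T^*U_\alpha)$ automatically, Theorem \ref{da6thm1}(i) produces a section $s_\alpha\in H^0(\cSz_X\vert_{R_\alpha})$ with $\io_{R_\alpha,U_\alpha}(s_\alpha)=i_\alpha^{-1}(H_\alpha)+I_{R_\alpha,U_\alpha}^2$, and $(R_\alpha,U_\alpha,H_\alpha,i_\alpha)$ is by construction a critical chart.

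Next I would glue the $s_\alpha$. Given two such charts with $x\in R_\alpha\cap R_\beta$, Theorem \ref{da4thm2} produces a standard form cdga $C$ minimal at $r$ with Zariski open inclusions $\al:A_\alpha\ra C$, $\be:A_\beta\ra C$ such that $\bs f_\alpha\ci\bSpec\al\simeq \bs f_\beta\ci\bSpec\be$. Writing $W=\Spec C(0)$ and $R_C=\Spec H^0(C)$, we obtain a Zariski open neighbourhood $R_C\subseteq R_\alpha\cap R_\beta$ of $x$, a closed embedding $k_C:R_C\hookra W$, and Zariski open inclusions $a:W\hookra U_\alpha$, $b:W\hookra U_\beta$ realizing $(R_C,W,a^*(H_\alpha),k_C)$ and $(R_C,W,b^*(H_\beta),k_C)$ as subcharts of the original two critical charts. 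By the derivation leading to equation \eq{da5eq37} in Example \ref{da5ex9}, we have $a^*(H_\alpha)-b^*(H_\beta)\in I_{R_C,W}^2$, so both define the same class in $k_C^{-1}(\O_W)/I_{R_C,W}^2$, hence by Theorem \ref{da6thm1}(i) the same section of $\cSz_X\vert_{R_C}$. Since such $R_C$ cover $R_\alpha\cap R_\beta$, the sheaf property gives $s_\alpha\vert_{R_\alpha\cap R_\beta}=s_\beta\vert_{R_\alpha\cap R_\beta}$, so the $s_\alpha$ glue to a global $s\in H^0(\cSz_X)$. Uniqueness is immediate: any $s'$ satisfying the stated property restricts to $s_\alpha$ on each $R_\alpha$, and these cover $X$.

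For the canonical bundle, on each Darboux chart, Proposition \ref{da2prop} together with $\d y^{-1}_j=\pd H_\alpha/\pd x^0_j$ identifies $\bL_{A_\alpha}\vert_{R_\alpha^\red}$ with the two-term complex $[T_{U_\alpha}\,{\buildrel\Hess H_\alpha\over\longra}\,\Om^1_{U_\alpha}]\vert_{R_\alpha^\red}$ in degrees $[-1,0]$, where the symplectic pairing $\om^0_\alpha=\sum\dd y^{-1}_j\,\dd x^0_j$ identifies the degree $-1$ generators $\dd y^{-1}_j$ with $T_{U_\alpha}\vert_{R_\alpha^\red}$. Taking determinants gives $\det(\bL_\bX)\vert_{R_\alpha^\red}\cong \det(\Om^1_{U_\alpha})\ot\det(T_{U_\alpha})^{-1}\vert_{R_\alpha^\red}\cong K_{U_\alpha}^{\ot 2}\vert_{R_\alpha^\red}$, which composed with $\io_{R_\alpha,U_\alpha,H_\alpha,i_\alpha}^{-1}$ of Theorem \ref{da6thm3}(i) yields a local isomorphism $K_{X,s}\vert_{R_\alpha^\red}\cong\det(\bL_\bX)\vert_{R_\alpha^\red}$. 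To make this global I would check that these local isomorphisms are compatible on overlaps: the naturality of $\det(\bL)$ under the étale/Zariski base change $\al:A_\alpha\ra C$, $\be:A_\beta\ra C$ must coincide with the cocycle $J_\Phi$ of Theorem \ref{da6thm3}(ii). Reduced to the common subchart on $W$, this amounts to verifying that the change of coordinates $(x^0_i,y^{-1}_j)\leftrightarrow(\check x^0_i,\check y^{-1}_j)$ spelled out in Example \ref{da5ex9} transforms the Hessian in exactly the way dictated by $J_\Phi$. This last compatibility is the principal technical obstacle; the rest of the argument reduces to the already-established identities \eq{da5eq35}--\eq{da5eq37} and the elementary linear algebra of the symplectic pairing.
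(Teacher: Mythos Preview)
Your overall strategy matches the paper's proof closely: local construction via Theorem~\ref{da5thm1}, gluing via Theorem~\ref{da4thm2} and the identity \eq{da5eq37}, and the determinant computation on the two-term complex $[TU\to T^*U]$ for the canonical bundle. The canonical-bundle part is essentially identical to the paper's argument, including the honest acknowledgement that the overlap compatibility with $J_\Phi$ is the remaining technical point.

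There is, however, a genuine gap in your gluing step. You assert that the degree-zero maps $a=\Spec\al(0):W\ra U_\alpha$ and $b=\Spec\be(0):W\ra U_\beta$ are Zariski open inclusions, and hence that $(R_C,W,a^*(H_\alpha),k_C)$ is a \emph{subchart} of $(R_\alpha,U_\alpha,H_\alpha,i_\alpha)$. This is not justified: the fact that $\al:A_\alpha\ra C$ is a Zariski open inclusion of cdgas says that $\bSpec C\ra\bSpec A_\alpha$ is an open immersion of \emph{derived} schemes, but the induced map on degree-zero parts $W=\Spec C(0)\ra\Spec A_\alpha(0)=U_\alpha$ need not be an open immersion. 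In the construction of Theorem~\ref{da4thm2}, $\Spec C(0)$ is cut out as a smooth locally closed subscheme of $\Spec A_\alpha(0)\times\Spec B_\beta(0)$, and $a,b$ are the two projections; there is no reason these should be open immersions. Without this, you cannot invoke the subchart relation, and you have not verified that $\Crit(a^*(H_\alpha))=R_C$ as subschemes of $W$, so $(R_C,W,a^*(H_\alpha),k_C)$ is not even known to be a critical chart.

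The paper avoids this by not claiming anything about $a,b$ beyond their being morphisms of smooth schemes. Instead it defines an auxiliary section $s_T$ directly via $\io_{T,W}$ using Theorem~\ref{da6thm1}(i), and then applies the functoriality statement Theorem~\ref{da6thm1}(ii) with $\Phi=a$ (respectively $\Phi=b$) to identify $s_T$ with $s_{R_\alpha}\vert_T$ (respectively $s_{R_\beta}\vert_T$). That functoriality requires only a morphism $\Phi:W\ra U_\alpha$ compatible with the embeddings, not an open inclusion. Your argument is easily repaired by making this substitution: drop the subchart claim, and use \eq{da6eq1} directly to transport the section defined on $W$ back to the sections $s_\alpha,s_\beta$.
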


We can think of Theorem \ref{da6thm4} as defining a {\it truncation
functor}
\e
\begin{split}
F:\bigl\{&\text{category of $-1$-shifted symplectic derived
$\K$-schemes $(\bX,\om)$}\bigr\}\\
&\longra\bigl\{\text{category of algebraic d-critical loci
$(X,s)$ over $\K$}\bigr\},
\end{split}
\label{da6eq3}
\e
where the morphisms $\bs f:(\bX,\om)\ra(\bY,\om')$ in the first line
are (homotopy classes of) \'etale maps $\bs f:\bX\ra\bY$ with $\bs
f^*(\om')\sim\om$, and the morphisms $f:(X,s)\ra (Y,t)$ in the
second line are \'etale maps $f:X\ra Y$ with~$f^*(t)=s$.

In \cite[Ex.~2.17]{Joyc} we give an example of $-1$-shifted
symplectic derived schemes $(\bX,\om),(\bY,\om')$, both global
critical loci, such that $\bX,\bY$ are not equivalent as derived
$\K$-schemes, but their truncations $F(\bX,\om),F(\bY,\om')$ are
isomorphic as algebraic d-critical loci. Thus, the functor $F$ in
\eq{da6eq3} is not full.

Suppose $Y$ is a Calabi--Yau 3-fold over $\K$ and $\cM$ a classical
moduli $\K$-scheme of simple coherent sheaves in $\coh(Y)$. Then
Thomas \cite{Thom} defined a natural {\it perfect obstruction
theory\/} $\phi:\cE^\bu\ra\bL_\cM$ on $\cM$ in the sense of Behrend
and Fantechi \cite{BeFa}, and Behrend \cite{Behr} showed that
$\phi:\cE^\bu\ra\bL_\cM$ can be made into a {\it symmetric
obstruction theory}. More generally, if $\cM$ is a moduli
$\K$-scheme of simple complexes of coherent sheaves in $D^b\coh(Y)$,
then Huybrechts and Thomas \cite{HuTh} defined a natural symmetric
obstruction theory on~$\cM$.

Now in derived algebraic geometry $\cM=t_0(\bs\cM)$ for $\bs\cM$ the
corresponding derived moduli $\K$-scheme, and the obstruction theory
$\phi:\cE^\bu\ra\bL_\cM$ from \cite{HuTh,Thom} is
$\bL_{t_0}:\bL_{\bs\cM}\vert_\cM\ra\bL_\cM$. Pantev et al.\ \cite[\S
2.1]{PTVV} prove $\bs\cM$ has a $-1$-shifted symplectic structure
$\om$, and the symmetric structure on $\phi:\cE^\bu\ra\bL_\cM$ from
\cite{Behr} is $\om^0\vert_\cM$. So as for Corollary \ref{da5cor1},
Theorem \ref{da6thm4} implies:

\begin{cor} 
\label{da6cor1}
Suppose $Y$ is a Calabi--Yau\/ $3$-fold over\/ $\K,$
and\/ $\cM$ is a classical moduli\/ $\K$-scheme of simple coherent
sheaves in $\coh(Y),$ or simple complexes of coherent sheaves in
$D^b\coh(Y),$ with perfect obstruction theory\/
$\phi:\cE^\bu\ra\bL_\cM$ as in Thomas\/ {\rm\cite{Thom}} or
Huybrechts and Thomas\/ {\rm\cite{HuTh}}. Then $\cM$ extends
naturally to an algebraic d-critical locus $(\cM,s)$. The canonical
bundle $K_{\cM,s}$ from Theorem\/ {\rm\ref{da6thm3}} is naturally
isomorphic to $\det(\cE^\bu)\vert_{\cM^\red}$.
\end{cor}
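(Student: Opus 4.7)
The plan is to obtain the corollary as a direct application of Theorem \ref{da6thm4}, using as input the existence of a derived enhancement of $\cM$ together with the shifted symplectic structure supplied by \cite{PTVV}. The substantive content of the corollary is essentially already in place; what remains is an identification of Thomas's/Huybrechts--Thomas's obstruction theory with the cotangent-complex restriction $\bL_{\bs\cM}\vert_\cM$, and a dimension/determinant bookkeeping step.

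First I would invoke the derived moduli theory of To\"en--Vaqui\'e and Pantev--To\"en--Vaqui\'e--Vezzosi to produce the derived moduli scheme $\bs\cM$ enhancing $\cM$, i.e.\ a locally finitely presented derived $\K$-scheme with $t_0(\bs\cM)\simeq\cM$. (One should note here that $\bs\cM$ is a priori a derived Artin stack, but since its classical truncation $\cM$ is assumed to be a $\K$-scheme, the remark in \S\ref{da32} implies $\bs\cM\in\dSch_\K$; alternatively one works with fixed-determinant moduli to rigidify.) By \cite[\S 2.1]{PTVV}, applied to the Calabi--Yau $3$-fold $Y$, the derived scheme $\bs\cM$ carries a canonical $(2-3)=-1$-shifted symplectic structure $\ti\om$. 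Thus $(\bs\cM,\ti\om)$ is input to which Theorem \ref{da6thm4} applies: it endows $\cM=t_0(\bs\cM)$ with a canonical algebraic d-critical structure $(\cM,s)$, and supplies a natural isomorphism $K_{\cM,s}\cong\det(\bL_{\bs\cM})\vert_{\cM^\red}$.

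The one remaining step is to identify $\det(\bL_{\bs\cM})\vert_{\cM^\red}$ with $\det(\cE^\bu)\vert_{\cM^\red}$. The key point is that the perfect obstruction theories $\phi:\cE^\bu\ra\bL_\cM$ constructed by Thomas \cite{Thom} for coherent sheaves and by Huybrechts--Thomas \cite{HuTh} for perfect complexes are by construction given (up to quasi-isomorphism of two-term complexes) by the restriction morphism $\bL_{i_\cM}:i_\cM^*(\bL_{\bs\cM})\ra\bL_\cM$ of the derived enhancement, as noted in the paragraph preceding the corollary and as shown in \cite{STV} using the natural obstruction theory of a quasi-smooth derived scheme. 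Granting this identification, one has $\cE^\bu\simeq\bL_{\bs\cM}\vert_\cM$ in $D(\qcoh(\cM))$, hence $\det(\cE^\bu)\cong\det(\bL_{\bs\cM}\vert_\cM)\cong\det(\bL_{\bs\cM})\vert_\cM$, and restricting to $\cM^\red$ gives the desired $K_{\cM,s}\cong\det(\cE^\bu)\vert_{\cM^\red}$.

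The only real obstacle is the comparison between the classically-defined obstruction theory of \cite{Thom,HuTh} and the derived-geometric one coming from $\bs\cM$; this is essentially the content of \cite[\S 1]{STV}, and I would simply quote it. A subsidiary point worth flagging is the passage from stacks to schemes: for $\cM$ to be a scheme one must fix determinants (or work with $\mathrm{Aut}$-rigid sheaves), and one must check that this rigidification is compatible with both the $-1$-shifted symplectic structure of \cite{PTVV} and with the obstruction theory of \cite{Thom,HuTh}; both are standard and follow from functoriality, but should be mentioned.
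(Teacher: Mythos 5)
Your proposal matches the paper's own argument essentially verbatim: invoke the derived enhancement $\bs\cM$, cite \cite[\S 2.1]{PTVV} for the $-1$-shifted symplectic structure, identify the obstruction theory $\cE^\bu\to\bL_\cM$ of \cite{Thom,HuTh} with $\bL_{t_0}:\bL_{\bs\cM}\vert_\cM\to\bL_\cM$ (as in the paragraph preceding the corollary, via the mechanism of \cite{STV} recorded in \S\ref{da33}(e)), and then apply Theorem \ref{da6thm4} to get both the d-critical structure and the isomorphism $K_{\cM,s}\cong\det(\cE^\bu)\vert_{\cM^\red}$. Your supplementary remarks about fixed determinants and the criterion for a derived Artin stack with scheme truncation to be a derived scheme are also points the paper makes (near Corollary \ref{da5cor1} and in \S\ref{da32}), so there is nothing to add or correct.
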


If $(S,\om)$ is an algebraic symplectic manifold over $\K$ and
$L,M\subseteq S$ are Lagrangians, then Pantev et al.\
\cite[Th.~2.10]{PTVV} show that the derived intersection $\bX=L\t_S
M$ has a $-1$-shifted symplectic structure. If $X=t_0(\bX)$ then
$\bL_\bX\vert_X\simeq[T^*S\vert_X\ra T^*L\vert_X\op T^*M\vert_X]$
with $T^*S\vert_X$ in degree $-1$ and $T^*L\vert_X\op T^*M\vert_X$
in degree zero. Hence
\begin{equation*}
\det(\bL_\bX\vert_X)\cong K_S\vert_X^{-1}\ot K_L\vert_X\ot K_M\vert_X
\cong K_L\vert_X\ot K_M\vert_X,
\end{equation*}
since $K_S\cong\O_S$. So as for Corollary \ref{da5cor3}, Theorem
\ref{da6thm4} implies:

\begin{cor} 
\label{da6cor2}
Suppose $(S,\om)$ is an algebraic symplectic manifold
over $\K,$ and\/ $L,M$ are algebraic Lagrangians in $S$. Then the
intersection $X=L\cap M,$ as a $\K$-subscheme of\/ $S,$ extends
naturally to an algebraic d-critical locus\/ $(X,s)$. The canonical
bundle $K_{X,s}$ from Theorem\/ {\rm\ref{da6thm3}} is isomorphic
to\/ $K_L\vert_{X^\red}\ot K_M\vert_{X^\red}$.

\end{cor}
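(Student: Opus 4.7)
The corollary is a direct consequence of Theorem~\ref{da6thm4} combined with the PTVV construction of shifted symplectic structures on derived Lagrangian intersections, and my plan is to assemble these ingredients and then compute the canonical bundle. First, I would invoke Pantev--To\"en--Vaqui\'e--Vezzosi \cite[Th.~2.10]{PTVV} to endow the derived intersection $\bX := L\t_S M$ with a canonical $-1$-shifted symplectic structure $\ti\om$. Note that $\bX$ is a derived $\K$-scheme (not merely a stack) since it is the fibre product of $\K$-schemes, and $t_0(\bX)$ is the classical fibre product $L\t_S M = L\cap M = X$ as a $\K$-subscheme of $S$. Applying Theorem~\ref{da6thm4} to $(\bX,\ti\om)$ then produces, naturally and uniquely, an algebraic d-critical locus structure $(X,s)$ on $X$, together with a canonical isomorphism $K_{X,s}\cong\det(\bL_\bX)\vert_{X^\red}$.

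It remains only to identify $\det(\bL_\bX)\vert_{X^\red}$ with $K_L\vert_{X^\red}\ot K_M\vert_{X^\red}$. For this I would use the Cartesian square
\[
\xymatrix@R=12pt@C=30pt{ \bX \ar[r] \ar[d] & L \ar[d] \\ M \ar[r] & S }
\]
together with the base change and fibre sequence properties of the cotangent complex recalled in \S\ref{da33}(b),(c). These yield a distinguished triangle
\[
\bL_S\vert_\bX \longra \bL_L\vert_\bX\op\bL_M\vert_\bX \longra \bL_\bX \longra \bL_S\vert_\bX[1]
\]
in $L_\qcoh(\bX)$. Restricting to the classical truncation $X=t_0(\bX)$ and using that $L,M,S$ are smooth, so $\bL_L\vert_X\simeq T^*L\vert_X$, $\bL_M\vert_X\simeq T^*M\vert_X$, $\bL_S\vert_X\simeq T^*S\vert_X$ are honest vector bundles (concentrated in degree $0$), we find that $\bL_\bX\vert_X$ is represented by the two-term complex $[T^*S\vert_X\ra T^*L\vert_X\op T^*M\vert_X]$ with $T^*S\vert_X$ in degree $-1$, as stated in the paragraph preceding the corollary.

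Taking determinants of this two-term complex (and further restricting to $X^\red$), multiplicativity of the determinant in distinguished triangles gives
\[
\det(\bL_\bX)\vert_{X^\red}\cong K_S\vert_{X^\red}^{-1}\ot K_L\vert_{X^\red}\ot K_M\vert_{X^\red}.
\]
Finally, an algebraic symplectic form $\om$ on $S$ provides a nowhere-vanishing section $\om^{\dim S/2}$ of $K_S$, hence a trivialization $K_S\cong\O_S$, so $K_S\vert_{X^\red}\cong\O_{X^\red}$. Combining with the isomorphism $K_{X,s}\cong\det(\bL_\bX)\vert_{X^\red}$ from Theorem~\ref{da6thm4} yields the desired $K_{X,s}\cong K_L\vert_{X^\red}\ot K_M\vert_{X^\red}$.

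There is no real obstacle here beyond carefully setting up the notation: the conceptual content sits in PTVV's theorem producing $\ti\om$ and in our Theorem~\ref{da6thm4}. The only minor subtlety I anticipate is being explicit about the \emph{naturality} of the canonical bundle isomorphism, i.e.\ that the identification $K_{X,s}\cong K_L\vert_{X^\red}\ot K_M\vert_{X^\red}$ is well defined independently of local choices; but this follows from the canonicity clauses in Theorems~\ref{da6thm3} and~\ref{da6thm4} together with the canonicity of the base-change isomorphisms \eq{da3eq1}.
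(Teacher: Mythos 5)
Your proof is correct and follows essentially the same route as the paper: invoke PTVV's Theorem 2.10 for the $-1$-shifted symplectic structure on $\bX=L\t_S M$, apply Theorem~\ref{da6thm4} to obtain $(X,s)$ together with $K_{X,s}\cong\det(\bL_\bX)\vert_{X^\red}$, identify $\bL_\bX\vert_X$ with the two-term complex $[T^*S\vert_X\ra T^*L\vert_X\op T^*M\vert_X]$, and then take determinants using $K_S\cong\O_S$. This matches the argument in the paragraph preceding the corollary in the paper.
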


Bussi \cite[\S 3]{Buss} proves a complex analytic analogue of
Corollary~\ref{da6cor2}.

We will take these ideas further in Brav, Bussi, Dupont, Joyce, and
Szendr\H oi \cite{BBDJS} and Bussi, Joyce and Meinhardt \cite{BJM}.
In \cite[\S 6]{BBDJS}, given an algebraic d-critical locus $(X,s)$
with an orientation $\smash{K_{X,s}^{1/2}}$, we construct a natural
perverse sheaf $\smash{P_{X,s}^\bu}$ on $X$ such that if $(X,s)$ is
locally modelled on $\Crit(f:U\ra\bA^1)$ for $U$ a smooth
$\K$-scheme then $P_{X,s}^\bu$ is locally modelled on the perverse
sheaf of vanishing cycles $\PV_{U,f}^\bu$. This has applications to
the categorification of Donaldson--Thomas theory of Calabi--Yau
3-folds, and to constructing a `Fukaya category' of algebraic
Lagrangians in an algebraic symplectic manifold.

In \cite[\S 5]{BJM}, given an algebraic d-critical locus $(X,s)$
with an orientation $K_{X,s}^{1/2}$, we construct a natural motive
$MF_{X,s}$ in a certain ring of motives $\oM^{\hat\mu}_X$ over $X$,
such that if $(X,s)$ is locally modelled on $\Crit(f:U\ra\bA^1)$
then $MF_{X,s}$ is locally modelled on $\bL^{-\dim
U/2}\bigl([X]-MF^{\rm mot}_{U,f}\bigr),$ where $MF^{\rm mot}_{U,f}$
is the motivic Milnor fibre of $f$. This has applications to motivic
Donaldson--Thomas invariants. We refer the reader to
\cite{BBDJS,BJM,Joyc} for more details.

\subsection{Proof of Theorem \ref{da6thm4}}
\label{da63}

Let $(\bX,\ti\om)$ be a $-1$-shifted symplectic derived $\K$-scheme,
and $X=t_0(\bX)$. For the first part of Theorem \ref{da6thm4}, we
must construct a section $s\in H^0(\cSz_X)$ such that $(X,s)$ is a
d-critical locus, and if $A,\om,\Phi,\bs f,\bR,U,R,f$ are as in Theorem
\ref{da6thm4}, then $(R,U,\Phi,f^{-1})$ is a critical chart on $(X,s)$.
The condition that $(R,U,\Phi,f^{-1})$ is a critical chart determines
$s\vert_R$ uniquely, as in Definition~\ref{da6def1}.

Theorem \ref{da5thm1}(i) implies that for any $x\in\bX$, we can find
such $A,\ab\om,\ab \Phi,\ab\bs f,\ab\bR,\ab U,\ab R,\ab f$ with $x\in
R\subseteq X$. So the condition in Theorem \ref{da6thm4} determines
$s\vert_R$ for Zariski open $R\subseteq X$ in an open cover of $X$.
Thus $s\in H^0(\cSz_X)$ satisfying the conditions of the theorem is
unique if it exists, and it exists if and only if the prescribed
values $s\vert_R,s\vert_S$ agree on overlaps $R\cap S$ between open
sets $R,S\subseteq X$.

So suppose $A,\om,\Phi,\bs f,\bR,U,R,f$ and $B,\check\om,\check \Phi,\bs
g,\bS,V,S,g$ are two choices above. Write $s_R$ and $s_S$ for the
sections of $\cSz_X$ on $R,S\subseteq X$ determined by the critical
charts $(R,U,\Phi,f^{-1})$ and $(S,V,\check \Phi,g^{-1})$, so that by
Definition \ref{da6def1}
\e
\io_{R,U}(s_R)=(f^{-1})^{-1}(\Phi)+I_{R,U}^2, \qquad
\io_{S,V}(s_S)=(g^{-1})^{-1}(\check \Phi)+I_{S,V}^2.
\label{da6eq4}
\e
We must show that $s_R\vert_{R\cap S}=s_S\vert_{R\cap S}$.

Let $x\in R\cap S$, so that $x=f(p)=g(q)$ for unique $p\in\Crit
\Phi\subseteq U$ and $q\in\Crit(\check \Phi)\subseteq V$. Then using the
method of \S\ref{da57}, Example \ref{da5ex9} constructs a standard
form cdga $C$ minimal at $r\in \Spec H^0(C)$, and Zariski open
inclusions $\al:A\ra C$, $\be:B\ra C$ with $\bs
f\ci\bSpec\al\simeq\bs g\ci\bSpec\be$, such that the smooth
$\K$-scheme $W=\Spec C(0)$ and $\K$-scheme morphisms
$a=\Spec\al(0):W\ra U$, $b=\Spec\be(0):W\ra V$ satisfy by
\eq{da5eq39}
\e
a^*(\Phi)\!-\!b^*(\check \Phi)\!\in\!\bigl(\d
C^{-1}\bigr){}^2\!=\!\bigl(a^*(\dd
\Phi)\bigr){}^2\!=\!\bigl(b^*(\dd\check \Phi)\bigr){}^2\!\subset\! C(0).
\label{da6eq5}
\e

Write $Z=\Spec H^0(C)$, regarded as a closed $\K$-subscheme of
$W=\Spec C(0)$. Then $f\ci a\vert_Z=g\ci b\vert_Z:Z\ra X$ is an
isomorphism with a Zariski open $\K$-subscheme $T\subseteq R\cap
S\subseteq X$ with $x\in T$. Define $s_T\in
H^0\bigl(\cSz_X\vert_T\bigr)$ by
\e
\begin{split}
\io_{T,W}(s_T)&=\bigl((f\ci a\vert_Z)^{-1}\bigr){}^{-1}(a^*(\Phi))
+I_{T,W}^2\\
&=\bigl((g\ci b\vert_Z)^{-1}\bigr)^{-1}(b^*(\check \Phi))+I_{T,W}^2,
\end{split}
\label{da6eq6}
\e
using the notation of Theorem \ref{da6thm1}(i) for the embedding
$(f\ci a\vert_Z)^{-1}=(g\ci b\vert_Z)^{-1}:T\hookra W$ of $T$ in the
smooth $\K$-scheme $W$, where the two expressions on the right hand
side of \eq{da6eq6} are equal by \eq{da6eq5}, since
\begin{equation*}
I_{T,W}=\bigl((f\ci a\vert_Z)^{-1}\bigr){}^{-1}\bigl((a^*(\dd \Phi))
\bigr)=\bigl((g\ci b\vert_Z)^{-1}\bigr){}^{-1}\bigl((b^*(\dd\check \Phi))
\bigr).
\end{equation*}

We now have
\begin{align*}
\io_{T,W}(s_R\vert_T)&=\bigl((f\ci a\vert_Z)^{-1}\bigr){}^{-1}
(a_\#)\ci \io_{R,U}\vert_T(s_R\vert_T)\\
&=\bigl((f\ci a\vert_Z)^{-1}\bigr){}^{-1}
(a_\#)\ci\bigl((f^{-1})^{-1}(\Phi)+I_{R,U}^2\bigr)\big\vert_T\\
&=\bigl((f\ci a\vert_Z)^{-1}\bigr){}^{-1}(a^*(\Phi))
+I_{T,W}^2=\io_{T,W}(s_T),
\end{align*}
using \eq{da6eq1} with $T,W,(f\ci a\vert_Z)^{-1},R,U,f^{-1},a$ in
place of $R,U,i,S,V,j,\Psi$ in the first step, \eq{da6eq4} in the
second, and \eq{da6eq6} in the fourth. Hence $s_R\vert_T=s_T$, as
$\io_{T,W}$ is injective in Theorem \ref{da6thm1}(i). Similarly
$s_S\vert_T=s_T$, so $s_R\vert_T=s_S\vert_T$. As we can cover $R\cap
S$ by such open $x\in T\subseteq R\cap S$, this implies that
$s_R\vert_{R\cap S}=s_S\vert_{R\cap S}$, and the first part of
Theorem \ref{da6thm4} follows.

For the second part of the theorem, let $A,\om,\Phi,\bs f,\bR,U,R,f$ be
as in Theorem \ref{da6thm4}, so that $(R,U,\Phi,f^{-1})$ is a critical
chart on $(X,s)$, and write $Y=\Spec H^0(A)\subseteq U$, so that
$f:Y\ra R$ is an isomorphism. Then Theorem \ref{da6thm3}(i) gives a
natural isomorphism
\e
\io_{R,U,\Phi,f^{-1}}:K_{X,s}\vert_{R^\red}\longra
(f^{-1})^*\bigl(K_U^{\ot^2}\bigr)\vert_{R^\red}.
\label{da6eq7}
\e
Also $\bL_{\bs f}:\bs f^*(\bL_\bX)\ra\bL_A\simeq\Om^1_A$ is a
quasi-isomorphism as $\bs f$ is a Zariski open inclusion. Hence
$\det(\bL_{\bs f})\vert_{Y^\red}:f^*(\det(\bL_\bX)\vert_{R^\red})\ra
\det(\Om^1_A)\vert_{Y^\red}$ is an isomorphism, so pulling back by
$f^{-1}\vert_{R^\red}$ gives an isomorphism
\e
(f^{-1}\vert_{R^\red})^*\bigl(\det(\bL_{\bs
f})\vert_{Y^\red}\bigr):\det(\bL_\bX)\vert_{R^\red}\longra
(f^{-1}\vert_{R^\red})^*\bigl(\det(\Om^1_A)\vert_{Y^\red}\bigr).
\label{da6eq8}
\e

Now by the material of \S\ref{da23} and \S\ref{da33}, we have a
natural isomorphism
\begin{equation*}
\Om^1_A\vert_{Y^\red}\cong \xymatrix@C=40pt{\bigl[TU\vert_{Y^\red}
\ar[r]^{\pd^2\Phi\vert_{Y^\red}} & T^*U\vert_{Y^\red}\bigr], }
\end{equation*}
with $TU\vert_{Y^\red}$ in degree $-1$ and $T^*U\vert_{Y^\red}$ in
degree 0. Thus we have a natural isomorphism
\e
\det(\Om^1_A)\vert_{Y^\red}\cong K_U^{\ot^2}\vert_{Y^\red}.
\label{da6eq9}
\e

Combining \eq{da6eq7}--\eq{da6eq9} gives a natural isomorphism
\e
K_{X,s}\vert_{R^\red}\longra\det(\bL_\bX)\vert_{R^\red},
\label{da6eq10}
\e
for each critical chart $(R,U,\Phi,f^{-1})$ constructed from
$A,\om,\Phi,\bs f,\bR$ as above. Combining Example \ref{da5ex9} on
comparing the charts $(R,U,\Phi,f^{-1})$ with the material of \cite[\S
2.4]{Joyc} defining the isomorphism $J_\Psi$ in Theorem
\ref{da6thm3}(ii), one can show that the canonical isomorphisms
\eq{da6eq10} on $R^\red,S^\red$ from two such charts
$(R,U,\Phi,f^{-1})$ and $(S,V,\check \Phi,g^{-1})$ are equal on the
overlap $(R\cap S)^\red$. Therefore the isomorphisms \eq{da6eq10}
glue to give a global canonical isomorphism
$K_{X,s}\cong\det(\bL_{\bX})\vert_{X^\red}$. This completes the
proof of Theorem~\ref{da6thm4}.

\medskip

\noindent{\small\sc Address for Christopher Brav:

\noindent Faculty of Mathematics, Higher School of Economics, 7 Vavilova Str., Moscow, Russia.

\noindent E-mail: {\tt chris.i.brav@gmail.com}.

\noindent Address for Dominic Joyce:

\noindent The Mathematical Institute, Radcliffe Observatory Quarter, Woodstock Road, Oxford, OX2 6GG, U.K.

\noindent E-mail: {\tt joyce@maths.ox.ac.uk}.}

\end{document}